\documentclass{article}

\usepackage{xcolor}
\usepackage[affil-it]{authblk}
\usepackage{enumitem}
\usepackage{setspace}
\usepackage{longtable}
\newcommand{\s}{1.5} 

\usepackage{amsthm}

\usepackage{amssymb, amsmath,multicol}
\usepackage{synttree, bussproofs}


\newtheorem{lemma}{Lemma}[]
\newtheorem{corollary}{Corollary}[]
\newtheorem{definition}{Definition}[]
\newtheorem{theorem}{Theorem}[]
\renewcommand{\phi}{\varphi}

\newcommand{\dfn}{Definition}
\newcommand{\sect}{Section}
\newcommand{\fig}{Figure}

\newcommand{\lem}{Lemma}

\newcommand{\hop}{\mathsf{H}}

\newcommand{\quasiframe}{$\langlae$-frame}
\newcommand{\quasimodel}{$\langlae$-model}
\newcommand{\laeframe}{$\lae$-frame}
\newcommand{\laemodel}{$\lae$-model}
\newcommand{\quasiframes}{$\langlae$-frames}

\newcommand{\ifandonlyif}{\textit{iff} }

\newcommand{\laei}{\mathsf{LAE}}
\renewcommand{\infty}{\ast}

\newcommand{\lae}{\mathsf{TLAE}}
\newcommand{\act}[1]{\mathfrak{d}^{\alpha_{#1}}_{j}}
\newcommand{\actag}[1]{\mathfrak{d}^{\alpha_{i}}_{#1}}

\newcommand{\expec}[1]{\mathfrak{e}^{\alpha_{#1}}}

\newcommand{\ques}{\langle ? \rangle}

\newcommand{\bldia}{\bdia}
\newcommand{\hdia}{\mathsf{P}}

\newcommand{\hboxx}{\mathsf{H}}

\newcommand{\w}[1]{W_{#1}}

\usepackage{bm}

\newcommand{\lang}{\langlae}

\newcommand{\wdia}{\Diamond}
\newcommand{\ndia}{ \raisebox{0.13ex}{\scalebox{0.83}{$\langle$\hspace{-0.45pt}\scalebox{0.9}{\normalfont\textsf{A}}\hspace{-0.45pt}$\rangle$}} }
\newcommand{\bdia}{\blacklozenge}
\newcommand{\qdia}{\langle ? \rangle}
\newcommand{\h}{\mathsf{H}}
\newcommand{\wbox}{\Box}
\newcommand{\nbox}{ \raisebox{0.3ex}{\scalebox{0.74}{$\bm{[}$\scalebox{1}{\hspace{0.6pt}\raisebox{-0.18ex}{\normalfont\textsf{A}}\hspace{0.6pt}}$\bm{]}$}} }
\newcommand{\nboxs}{\hspace{-3pt}\scalebox{0.8}{\nbox}\hspace{-3pt}}
\newcommand{\bbox}{\blacksquare}
\newcommand{\qbox}{[?]}
\newcommand{\p}{\mathsf{P}}
\newcommand{\atomacts}{\mathtt{Action}}
\newcommand{\actions}{\mathtt{Action}^{*}}
\renewcommand{\actions}{\langact}
\newcommand{\agents}{\mathtt{Agent}}
\newcommand{\var}{\mathtt{Var}}

\newcommand{\cl}[1]{Cl({#1})}
\newcommand{\at}[1]{At({#1})}
\newcommand{\wit}[1]{\mathtt{Wit}^{#1}}
\newcommand{\witset}{\mathtt{Wit}}
\definecolor{darkolivegreen}{rgb}{0.33, 0.42, 0.18}
\definecolor{dollarbill}{rgb}{0.52, 0.73, 0.4}

\newcommand{\langact}{\mathcal{L}_{\mathsf{Act}}}
\newcommand{\langlae}{\mathcal{L}_{\lae}}

\usepackage{tikz}
\usetikzlibrary{trees}
\usetikzlibrary{fit}
\usepackage{tikz}
\usetikzlibrary{positioning,arrows,calc,decorations.markings}

\newcommand{\instr}[3]{\textit{Instr}^{#1}_{#2}({#3})}
\newcommand{\achiev}[4]{\textit{Achiev}^{#1}_{#2}({#3},{#4})}
\newcommand{\fail}[4]{\textit{Fail}^{#1}_{#2}({#3},{#4})}
\newcommand{\success}[4]{\textit{Succ}^{#1}_{#2}({#3},{#4})}
\newcommand{\best}[4]{\textit{Best}^{#1}_{#2}({#3},{#4})}
\newcommand{\worst}[4]{\textit{Worst}^{#1}_{#2}({#3},{#4})}
\newcommand{\better}[2]{\succeq^{#1}_{#2}}
\newcommand{\sbetter}[2]{\succ^{#1}_{#2}}
\newcommand{\Rbb}{R_{\bbox}}
\newcommand{\Rwb}{R_{\wbox}}
\newcommand{\Rh}{R_{\h}}
\newcommand{\good}[3]{\textit{Good}^{#1}_{#2}({#3})}
\newcommand{\poor}[3]{\textit{Poor}^{#1}_{#2}({#3})}

\newcommand{\matnew}[1]{\textcolor{black}{#1}}

\newcounter{quotecount}
\setcounter{quotecount}{1}

\tikzset{
modal/.style={>=stealth',shorten >=1pt,shorten <=1pt,auto,node distance=1.5cm,semithick},
world/.style={circle,draw,minimum size=0.5cm,fill=gray!15},
point/.style={circle,draw,inner sep=0.5mm,fill=black},
reflexive above/.style={->,loop,looseness=7,in=120,out=60},
reflexive below/.style={->,loop,looseness=7,in=240,out=300},
reflexive left/.style={->,loop,looseness=7,in=150,out=210},
reflexive right/.style={->,loop,looseness=7,in=30,out=330}
}

\begin{document}

\title{A logical analysis of instrumentality judgments:\\
means-end relations in the context of experience and expectations}

\author{Kees van Berkel%
  \thanks{Electronic address: \texttt{kees@logic.at}}}
  \affil{Ruhr Universit\"at Bochum, Institute for Philosophy II, Universit\"atsstra{\ss}e 150, 44801 Bochum, Germany}

\author{Tim S. Lyon%
  \thanks{Electronic address: \texttt{timothy\_stephen.lyon@tu-dresden.de}}}
\affil{Technische Universit\"at Dresden, Institute of Artificial Intelligence, N\"othnitzer Str. 46, 01069 Dresden, Germany}

\author{Matteo Pascucci%
  \thanks{Electronic address: \texttt{matteopascucci.academia@gmail.com}}}
\affil{Institute of Philosophy, Slovak Academy of Sciences, Klemensova 19, 811 09 Bratislava, Slovak Republic}

\date{}

\maketitle

\begin{abstract}
\noindent  This article proposes the use of temporal logic for an analysis of instrumentality inspired by the work of G.H. von Wright. The first part of the article contains the philosophical foundations. We discuss von Wright's general theory of agency and his account of instrumentality. Moreover, we propose several refinements to this framework via rigorous definitions of the core notions involved. In the second part, we develop a logical system called Temporal Logic of Action and Expectations ($\lae$). The logic is inspired by a fragment of propositional dynamic logic based on indeterministic time. The system is proven to be weakly complete relative to its given 
semantics. 
We then employ $\lae$ to formalise and analyse the instrumentality relations defined in the first part of the paper. Last, we point out philosophical implications and possible extensions of our work.  
\end{abstract}

\noindent \textit{Keywords}: Action logic $\cdot$ Instrumentality $\cdot$ Means-end relation $\cdot$ Philosophical logic 
$\cdot$ Temporal logic $\cdot$ Von Wright

\section{Introduction}\label{Sec:introduction}

Agents shape their world by making choices, performing actions, and exercising abilities. They reason practically about attaining ends, plan short-term and long-term, and comply with and violate norms. Agents may be right, lucky, and mistaken in their planning. In all of the above, instrumentality statements---also referred to as means-end relations---play a vital role. They provide reasons to act in one way instead of the other. For instance, the statement `Taking the A-train is an excellent means for going to Harlem’ may influence whether I visit my friend Eduard, who lives in Harlem, by train. My experience with traffic jams in New York may cause me to refrain from going by car instead. 

The concepts of action, ability, and choice are central to any theory of agency \cite{Ans00,Dav16,Gol70,Har71,Wri63} and have been extensively investigated in philosophical logic \cite{Aqv02,BelPerXu01,Bro88,Seg92,Wri68}. 
In contrast, the philosophical and logical investigation of instrumentality relations has thus far received comparably little attention in the literature. In particular, the formal study of how agents acquire and comparatively assess the quality of instrumentality judgments remains to be conducted. This is especially noteworthy due to the role of instrumentality statements in the fields of practical reasoning \cite{Aud89,Cla87,Har71,Raz78,Wri63b,Wri72b}, AI planning \cite{Meyetal15,RaoGeo95}, and linguistics \cite{ConLau16,Sae01,vonSte06}. Although it is common to analyse reasoning \textit{with} such statements, none of the above accounts treat \textit{how} various instrumentality judgments are obtained, compared, and assessed. To our knowledge, Georg Henrik von Wright is the only philosopher who provides such a philosophical, yet brief, account of instrumentality judgments. This article provides a formal account of instrumentality inspired by von Wright's philosophy.
  
Consider the following practical problem:
\begin{quote}
I would like to have this small parcel opened. What should I do? Which action is the best choice for securing my desired end? For instance, would ripping the parcel's cardboard, cutting the tape with a knife, or using a pair of scissors be most suitable for my purpose?  
\end{quote}
To satisfactorily address the above problem, I must find out which actions are (most) suitable for attaining my goal. Various challenges arise: I need to somehow collect candidate instruments that will satisfactorily resolve my practical problem. Subsequently, none of the instruments at my disposal may be necessary, that is, there are only sufficient means available. How should candidate means be compared? Which must be preferred and chosen? Is there a difference in the quality of these available instruments?

In the aforementioned fields, instrumentality relations are commonly taken as given. 
For instance, most accounts of practical reasoning limit illustrations of practical inference to cases of necessary means, yielding oversimplified representations 
\cite{Har71}. 
Exceptions are \cite{Cla87,Lew17,Wal07} which discuss versions of comparitivism as a way to resolve cases in which various sufficient instruments are available. Nonetheless, in all these accounts instrumentality relations are assumed to be present from the outset, bypassing the following questions: 
  \begin{itemize}
\item[(q1)]    \textit{How are instrumentality judgments acquired by an agent?}

\item[(q2)] \textit{How can the comparative quality be assessed for such judgments?}
  \end{itemize}

This article addresses questions q1 and q2. Concerning the above practical problem, one may respond to these questions in various ways. For instance, one may invoke experience, apply theoretical knowledge, or follow the advice of a trustful adviser. Given my desire to open the parcel, I may recall from personal experience that using a knife or a pair of scissors always sufficed. 
Then again, I may recall a piece of advice that one should avoid using knives to open parcels since they may damage the contents. Alternatively, I may search the internet for possible ways to open my parcel. In this article, we propose several ways to compare actions as instruments for a given purpose. In particular, we assign a pivotal role to the agent’s past in yielding instrumentality judgments. 

\paragraph{Three Criteria.}
Guided by questions q1 and q2, any account of instrumentality should be able to address the following three points: First (I),\label{criterionI} the account must clarify what it means to say that for an agent $\alpha$, action $\Delta$ is an instrument serving purpose $\phi$. The fact that instrumentality is a relative notion becomes clear when we see that different actions may serve different ends for different agents with different abilities. Considering the previous example, I might not be comfortable with knives which means that using a knife to open a parcel would not be a suitable instrument for me (although it might be for someone else). 

Second (II),\label{criterionII} the account must not only provide procedures to determine which instruments are suitable for the purpose at hand, but it must also allow for a comparison of the different instruments collected. The assessment of instrumentality judgments is \textit{axiological} (from the Greek `ax\'ia', for `value'), i.e., it provides a label expressing a specific value of the instruments at hand; e.g., scissors are \textit{good} instruments for opening parcels, although using your hands would be \textit{better}. Eventually, such axiological judgments concerning instruments may serve as a guide in practical decision-making.

As a third point (III),\label{criterionIII} we observe that one of the typical features of instrumentality relations is that they are not given once and for all. In many cases, such judgments are established via inductive arguments, relying on past experience witnessing the connection between the terms involved. As famously noted by David Hume \cite{Hum39}, inductive arguments are affected by a fundamental problem: how are we justified in making inferences from an observed connection in the past to instances of that connection of which we have no experience? For example, in forming instrumentality judgments based on experience, an individual agent can often not collect all relevant past cases that would settle the issue. Even then, future cases may still be different. For this reason, judgments of instrumentality are essentially \textit{defeasible}. For instance, such judgments may need to be revised due to new personal experience, additionally secured information on the past, and newly received data from other agents.

\paragraph{Contributions.} 
In this work, we address the above three criteria. We do this by developing a formal account of instrumentality. Our account is grounded in von Wright's theory of agency in general and his analysis of instrumentality in particular. In the work `The Variety of Goodness' \cite{Wri72}, von Wright provides a philosophical discussion of instrumentality by which actions can be judged as `good instruments' for specific purposes.

In order to address (I)-(III), we take von Wright's account as a departure point and extend it where necessary. Those parts rooted in von Wright's account are made explicit throughout this work. We define a logical system called the \underline{T}emporal \underline{L}ogic of \underline{A}ctions and \underline{E}xpectations ($\lae$) that will formally capture the developed theory of instrumentality. We prove the weak completeness of $\lae$ with respect to a corresponding Kripke-style semantics using a variation of the Fischer-Ladner construction for propositional dynamic logic \cite{FisLad79}. Employing $\lae$, we provide formalisations of the acquired conceptions of (comparative) instrumentality and discuss the philosophical implications of our formal setting.
  
\paragraph{Outline.} 
Section \ref{Sec:philosophy_agency} consists of a brief analysis of von Wright's general theory of agency and his theory of instrumental goodness. We refine von Wright's theory by supplementing criteria for comparing instruments that serve the same purpose. In particular, in \sect~\ref{subsect:philo_good} and \ref{subsect:philo_better}, we address criterion I by providing various notions of instrumentality deduced from an agent's past experience. In \sect~\ref{subsect:philo_comparing}, we deal with criterion II by providing different ways of comparing instrumentality judgments, yielding various value judgments. \sect~\ref{subsect:philo_expectations} is devoted to the defeasibility of instrumentality judgments as expressed in criterion III, and additionally contains a refinement of our take on criterion I. In Section \ref{Sec:logic_lae}, we discuss the requirements of our formal language and introduce the logical system $\lae$, which 
is a temporal extension of the logic `Logic of Actions and Expectations’ ($\mathsf{LAE}$) in 
\cite{BerPas18}. In Section \ref{Sec:weak_completeness}, we prove that $\lae$ is weakly complete. After that, in Section \ref{Sec:formal_instrumentality}, we discuss the philosophical implications of our formalism and address criteria I-III formally: we logically formalise different notions of instrumentality (criterion I), present several semantic notions of comparative instrumentality (criterion II), and discuss the defeasible nature of instrumentality statements (criterion III). In Section \ref{sec:closing_remarks}, we address future work.

\section{Agency and Instrumentality}\label{Sec:A}\label{Sec:philosophy_agency}

The backbone of our philosophical analysis will be von Wright's general theory of agency, as laid out in \cite{Wri63,Wri68,Wri72}. We start with a brief analysis of the theory and refer to \cite{Aqv02,BerPas18,Sto10} for a more extensive discussion. 
 We subsequently provide a philosophical analysis of instrumental goodness and comparative instrumentality. The former is primarily based on von Wright's discussion of instrumental goodness as laid out in~\cite[pp.~19-40]{Wri72}. In short, instrumental goodness covers the study of judgments concerning how \textit{well} instruments serve their purpose (equivalently, how well means serve their ends). Since von Wright's analysis is sparse---i.e., being a sub-topic of his theory of goodness \cite{Wri72}---we extend his account in two ways: (i) we discuss several refined instrumentality notions and (ii) consider possible definitions of comparing instruments. The theory presented in this section will ground the subsequent formalisation. 

\subsection{Von Wright's General Theory of Agency}

According to von Wright, to act is to interfere with the course of nature~\cite{Wri63}. Such interference manifests itself in bringing something about or preventing something from happening. What is brought about or prevented is a particular \textit{state of affairs}, i.e., a partial description of the world such as `the parcel is open'. To bring about a result $\phi$ means to act ``in such a manner that the state of affairs that $\phi$ is the result of one's action'' \cite[p.~13]{Wri63}. Likewise, prevention of $\phi$ indicates that one's action has succeeded in ensuring $\lnot \phi$.

\begin{figure}
    \centering
\begin{tikzpicture}
    \node[world] (w0) [label=right:{$\lnot \phi$}] {$w_0$};
    \node[world] (w1) [above=of w0, xshift=-25pt, label=above:{$\phi$}] {$w_1$};
    \node[world] (w2) [above=of w0, xshift=25pt, label=above:{$\lnot \phi$}] {$w_2$};
    \node[] (label) [below=of w0, yshift=25pt] {(i) produce $\phi$};
    \path[->, draw] (w0) -- (w1) node [midway, xshift=7pt] {$a$};
    \path[->, draw] (w0) -- (w2) node [midway, xshift=7pt] {$n$};
    \end{tikzpicture}
    \hspace{0.4cm}    
\begin{tikzpicture}
    \node[world] (w0) [label=right:{$\phi$}] {$w_0$};
    \node[world] (w1) [above=of w0, xshift=-25pt, label=above:{$\lnot \phi$}] {$w_1$};
    \node[world] (w2) [above=of w0, xshift=25pt, label=above:{$\phi$}] {$w_2$};
    \node[] (label) [below=of w0, yshift=25pt] {(ii) destroy $\phi$};
    \path[->, draw] (w0) -- (w1) node [midway, xshift=7pt] {$a$};
    \path[->, draw] (w0) -- (w2) node [midway, xshift=7pt] {$n$};
    \end{tikzpicture}
        \hspace{0.4cm}      
\begin{tikzpicture}
    \node[world] (w0) [label=right:{$\lnot \phi$}] {$w_0$};
    \node[world] (w1) [above=of w0, xshift=-25pt, label=above:{$\lnot \phi$}] {$w_1$};
    \node[world] (w2) [above=of w0, xshift=25pt, label=above:{$\phi$}] {$w_2$};
    \node[] (label) [below=of w0, yshift=25pt] {(iii) suppress $\phi$};
    \path[->, draw] (w0) -- (w1) node [midway, xshift=7pt] {$a$};
    \path[->, draw] (w0) -- (w2) node [midway, xshift=7pt] {$n$};
    \end{tikzpicture}
        \hspace{0.4cm}      
\begin{tikzpicture}
    \node[world] (w0) [label=right:{$\phi$}] {$w_0$};
    \node[world] (w1) [above=of w0, xshift=-25pt, label=above:{$\phi$}] {$w_1$};
    \node[world] (w2) [above=of w0, xshift=25pt, label=above:{$\lnot \phi$}] {$w_2$};
    \node[] (label) [below=of w0, yshift=25pt] {(iv) preserve $\phi$};
    \path[->, draw] (w0) -- (w1) node [midway, xshift=7pt] {$a$};
    \path[->, draw] (w0) -- (w2) node [midway, xshift=7pt] {$n$};
    \end{tikzpicture}
    \caption{Von Wright's four elementary types of action. The transition (arrow) from $w_0$ denoting the agent acting is labelled $a$, and the alternative transition indicating the agent's non-interference with nature is labelled $n$.}
   
\label{fig:VW_four_types_of_action}
\end{figure}
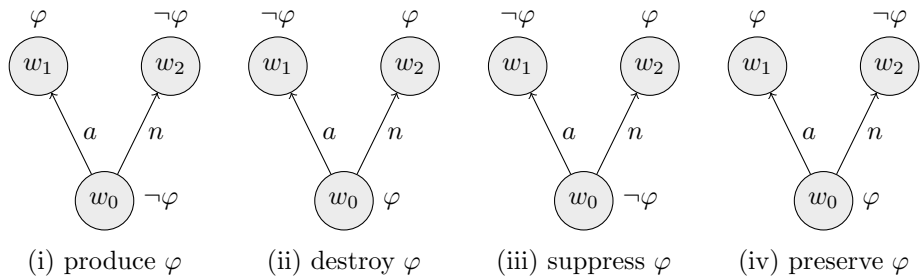

 The above concept of action is founded on the notion of \textit{change}. In fact, for von Wright, any theory of agency and action must presuppose an account of change, e.g., see \cite{Aqv02}. He takes change to define a \textit{transition} from an initial state (i.e., the present moment) to an end-state (i.e., a future moment). Such transitions can be either agent-independent (e.g., a moon eclipse) or agent-dependent (e.g., me cutting the tape of my parcel). The agent-dependent setting forces a non-deterministic worldview. That is, to bring something about forces at least the following three elements: the initial state, in which the agent finds herself, the actual end-state (which is the state that emerges after the performed action), and an alternative end-state (which would
 result if the agent refrains from performing the action).
  
Von Wright discusses various relations between these three states. By bringing together the above account of change with the twofold distinction of bringing about and prevention, he characterises four types of action: \textit{producing}, \textit{destroying}, \textit{suppressing}, \textit{preserving}. The first two bring about something, whereas the latter two prevent something. (We refer to \cite{Aqv02,BerPas18} for a discussion of these action types in a formal setting.) The four action types are characterised in \fig~\ref{fig:VW_four_types_of_action}. 
 For instance, at (iii), the act of suppressing $\phi$ indicates that at the initial state $\lnot \phi$ holds, through the agent's acting $\lnot \phi$ continues to hold, and if the agent would not have acted $\phi$ would have come about. 

Von Wright's
reading of the four action types is arguably too strong:  
the agent's acting in \fig~\ref{fig:VW_four_types_of_action} ultimately decides the fate of $\phi$. In the case of producing, by acting, the agent ensures $\phi$, whereas, by not-acting, the agent can ensure $\lnot \phi$. In other words, von Wright's account takes agency as causally sufficient in both directions, e.g., see \cite{Aqv02}. Furthermore, in \fig~\ref{fig:VW_four_types_of_action}, there is no distinction made between different kinds of action an agent can perform at $w_0$.
  
Since a general analysis of agency involves many distinct agents and distinct agents can simultaneously perform distinct actions, an individual agent's action is often not causally sufficient. This is known as the \textit{uncertainty} of action \cite{BelPerXu01}. We adopt a generalisation of von Wright's approach that includes this uncertainty. A transition involves the following three elements: (i) an initial state, (ii) a \textit{set} of actions, and (iii) a \textit{set} of possible final states. Henceforth, we also refer to such states as \textit{moments} in indeterministic time. A single agent does not entirely control the course of events, but even the complete set of actions performed by all agents involved does not necessarily entail a unique end-state (cf. the influence of nature). For this reason, we say that a set of actions \textit{causally contributes to the attainment of} the (actual) end-state of a transition only if the end-state would have been different without the performance of that set of actions.

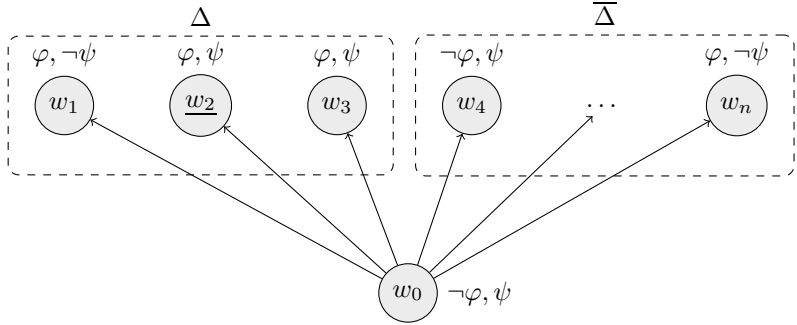
\begin{figure}
    \centering
   
\begin{tikzpicture}
    \node[world] (w0) [label=right:{$\lnot \phi, \psi$}] {$w_0$};
    \node[world] (w1) [above=of w0, xshift=-130pt, yshift=20pt, label=above:{$\phi, \lnot \psi$}] {$w_1$};
    \node[world] (w2) [right=of w1, label=above:{$\phi, \psi$}] {$\underline{w_2}$};
    \node[world] (w3) [right=of w2, label=above:{$\phi, \psi$}] {$w_3$};
    \node[world] (w4) [right=of w3, label=above:{$\lnot \phi, \psi$}] {$w_4$};
    \node[] (wdot) [right=of w4] {$\dots$};
    \node[world] (wn) [right=of wdot, label=above:{$\phi, \lnot \psi$}] {$w_n$};
    \node[draw, dashed, color=black, rounded corners, inner xsep=10pt, inner ysep=15pt, fit=(w1)(w3), label={$\Delta$}] (c1) {};
    \node[draw, dashed, color=black, rounded corners, inner xsep=10pt, inner ysep=15pt, fit=(w4)(wn), label=$\overline{\Delta}$] (c2) {};
    \path[->,draw] (w0) to (w1);
    \path[->,draw] (w0) to (w2);
    \path[->,draw] (w0) to (w3);
    \path[->,draw] (w0) to (w4);
    \path[->,draw] (w0) to (wdot);
    \path[->,draw] (w0) to (wn);
    \end{tikzpicture}
       
\caption{Indeterministic time and uncertainty of action: producing $\phi$ through performing action $\Delta$ at $w_0$. Let $\phi$ stand for `the parcel is open', $\psi$ for `the parcel is undamaged', and $\Delta$ for `using a knife to cut the parcel's tape'. Alternatively, one may label arrows to denote $\Delta$-transitions (cf. \fig~\ref{fig:expectations_generalizations}).}
   
\label{fig:indeterministic tree}
\end{figure}

For instance, considering \fig~\ref{fig:indeterministic tree}, we say that the action $\Delta$ brings the agent from the present moment $w_0$ to either one of the future moments $w_1$, $w_2$, or $w_3$ without strictly determining either of the three. Still, all three moments satisfy $\phi$. Thus, we say that the agent can produce $\phi$ by performing $\Delta$ at $w_0$, even though the agent cannot secure a unique future moment with action $\Delta$ (e.g., one could read $\phi$ as `the parcel is open' and $\Delta$ as `using a knife to cut the parcel's tape'). Furthermore, performing $\overline{\Delta}$---which is the complement of $\Delta$---could lead to at least one future moment where $\lnot \phi$ holds, namely, $w_4$. Suppose that $w_2$ is the actual future moment (underlined in \fig~\ref{fig:indeterministic tree}). In that case, the agent causally contributed to the attainment of $w_2$ through performing $\Delta$, since if the agent had refrained from performing $\Delta$, the resulting possible future moments would have been one of $w_4,\ldots,w_n$. Last, although $\Delta$ successfully ensures $\phi$, the action can still fail to secure other ends such as $\psi$ in \fig~\ref{fig:indeterministic tree} (e.g., where $\psi$ reads `the parcel is undamaged'). 

As a final remark concerning the nature of the term `action',
we will follow the usual distinction between types (i.e., generic categories, such as `writing') and tokens (i.e., concrete instances in specific circumstances, such as the action of a particular person writing on a particular blackboard on a specific date; see \cite{Gol70} for an extensive discussion). Von Wright adopts a similar demarcation by distinguishing between
act-categories, on the one hand, and
act-individuals, on the other hand \cite[p.~36]{Wri63}.
It suffices to restrict our analysis to 
{\textit{atomic} actions}, \textit{negative} actions (e.g., `not crossing the street') and
{\textit{complex}} actions (e.g.,  `turning left or turning right' and `turning left and hitting the break'). We do not consider \textit{sequences} of actions.

\subsection{Instrumentality and Instrumental Goodness}\label{subsect:philo_good}

We are now in a position to address criterion I of \sect~\ref{Sec:introduction}, and clarify what it means that an action $\Delta$ is an instrument serving purpose $\phi$. Our analysis will yield two provisional definitions at the end of the next subsection. Central to the study of instrumentality is the relation between an \textit{action} and a \textit{result}. The former is the instrument for the desired outcome expressed in the latter. The outcome can therefore be seen as the purpose of performing the action in question. Thus, we refer to the action as an \textit{instrument} serving a particular \textit{purpose}. For example, `pulling down the lever of a door and drawing the door towards you' is the instrument for the result `the door is open'. 

Following von Wright \cite[p.~21]
{Wri72}, we can group actions into categories, or kinds, according to the purposes served. To illustrate, no action will qualify as a \textit{cutting}-instrument unless it can serve the purpose of cutting. 
In this respect, the ability to serve a `cutting purpose' is a functional characteristic of members of the kind `cutting'(-instruments). Actions do not necessarily serve a unique purpose. As an instrument, an action can be a member of several kinds, serving multiple purposes. The action `using a knife' is an instrument for opening parcels and for peeling apples. 

The \textit{goodness} of an instrument is judged in relation to \textit{how} it serves the purpose with which it is associated. Consequently, an instrument may serve certain purposes with excellence while serving others poorly. 
In this article, 
we concentrate on a specific agentive source for judgments of instrumental goodness: the agent's (personal) experience with the instruments serving the purpose. In the case of the agent's experience, 
the agent checks her past for applications of those actions belonging to the same type as the one under consideration, checking whether these past applications successfully served the purpose at hand. This temporal component will be central to our definition of instrumental goodness.

We point out that for instrumentality judgments what matters is an agent's \emph{perception} of the relation between an action and effect. That is, we are concerned with what the agent believes is the relation between an instrument and purpose, which may or may not be rooted in any physical causal relation. For instance, to form a judgement concerning `cutting knives' and `opening parcels', an agent needs no (prior) knowledge of the physics involved in cutting the tape of a parcel (the sharpness of the blade, the movement of the arm, the consistency of the tape, 
etc.). In our case, what counts is the agent's (past) experience of the event (opening a parcel with a knife) and the beliefs that were formed accordingly. Therefore, when we say that a knife is an instrument for opening parcels, we do not refer to its set of physical causal qualities \textit{per se}. In our opinion, the notion of an instrument is a \textit{practical} one, and it would generally be better to keep it distinct from the notion of a cause, as employed in the analysis of \textit{physical} connections between things.\footnote{This relates to Hume's criticism of the notion of causation: causality judgements depend on the set of beliefs of a specific group of agents given a specific context and cannot be generalised to constitute physical laws. They are not intended to have such a universal validity.} We see past experience as a fruitful approach to agentive reasoning since it is a source accessible to the agent at any given time.\footnote{Past experience is not exhaustive. Additionally, one may consider here-say, advice, education, and theoretical knowledge as sources alternative to past experience. Such alternative sources fulfil an important role in accounting for the employment of instruments that have never been used before. We leave this to future work.}

This article further develops the idea of rooting instrumentality judgments in past experience. Von Wright involves past experience in his analysis of goodness of skill and instrument, but the expansion of the idea as presented in this article is our own. We propose that instrumentality judgements of this type are formed in \textit{three steps}. In the first step, the agent collects all relevant evidence available from the past. This is the \textit{empirical part} of the procedure. The second step consists in forming context-relative judgments based on what past experience has taught. For instance, in considering all cases in which two actions were performed, it may turn out that a particular outcome was more frequently obtained in association with one of the actions than with the other. This is the \textit{inductive part} of the procedure (i.e., the generalisation of the relevant experience). Third, inferences can be drawn from these context-relative judgements in combination with considerations of chance, (unpredictable) interference, and additional circumstances. Such inferences serve as a guide for the agent's current behaviour. This is the \textit{deductive part} of the argument.

\subsection{Three Notions of Instrumentality}\label{subsect:philo_better}

The ambivalent nature of the philosophical concept of `good' may suggest that judgments of instrumental goodness are not objective. 
However, von Wright \cite[p.~25]{Wri72} argues that in dealing with instrumental goodness, the relation between `purpose’, `instrument’ and `good’ is, in fact, objective. Namely, judgments of instrumental goodness express two types of `connexion'. The first is a causal connexion, expressing a relation between the instrument (as a perceived cause) and its purpose (the desired effect). Von Wright emphasises that this connexion can be empirically checked and is thus objective. Extending the above to cover past experience, we observe that collecting an agent's relevant past experience is also an empirical process, having the same objectivity status. To avoid confusion, we refer to the first connexion as an \textit{empirical connexion} instead. The second is a \textit{logical} connexion that holds between the purpose in question and the term `better'. Namely, given the agent's desire for a particular effect (the purpose), one can order the available actions according to their empirical connexion such that one action can be logically determined to be a better instrument than another. Accordingly, judgments of instrumental goodness can be objective since we refer to an observable, empirical property of the instrument and assign goodness based on an ordering, which is a judgment of logic.

There are several ways to assess the goodness of an instrument. For von Wright, `goodness' always refers to particular observable properties of the instrument that make it suitable for some purpose. This property is called the good-making property. Good-making properties are the properties of the instruments that enable the logical ordering of goodness. To illustrate, if one intends to have some chopped vegetables (the purpose), the good-making property `sharper’ can determine which of the available knives (instruments) is better. Here, sharpness determines the causal connexion between the knife and cutting. One can then order all available knives according to their sharpness to determine which are sharper and, thus, better \cite[p.~25]{Wri72}.

This article takes past experience as a general property for ordering instruments. In particular, we focus on the successful applications of the instrument that guaranteed the effect, as witnessed by the agent. Given this good-making property, a judgment about one instrument `being better’ than another is a logical consequence of ‘being more successful at guaranteeing the desired result’.

To qualify a generic instrument---i.e., action type---as good for some purpose, we need to ground our judgment in the past performance of concrete actions, i.e., action tokens. We refer to this \textit{temporal component} as the historical witness of an instrument’s suitability. In inquiring about whether to apply a certain instrument, agents often base decisions on statements such as:
\begin{itemize}
    \item[(i)] `it has worked before';
    \item[(ii)] `so far, it has not disappointed me';
    \item[(iii)] `well, it has thus far worked better than any of the alternatives'.
\end{itemize}
These three remarks illustrate the importance of temporal reference. Furthermore, we identify three different criteria of instrumental goodness in (i)-(iii): The first remark exemplifies, what we will call, the minimum criterion for any appropriate definition of instrumentality: (i) the action \textit{has} served the purpose at least once and, for that reason, it \textit{can} serve the purpose as an instrument. That is to say, criterion (i) functions as a lower bound on the instrument's suitability, thus identifying \textit{candidate instruments}. In the second remark, we recognise an upper bound, i.e., a maximum criterion: (ii) there have been applications of the instrument and these applications \textit{have always} served the purpose. Criterion (ii) is referred to as instrumental \textit{excellence}. In the last remark, we identify a \textit{comparative} approach to instrumental goodness: (iii) the action is suitable in \textit{comparison} to alternative actions.

Observe that (i) and (ii) express, respectively, an existential criterion and a universal (constructive) criterion. These criteria enable us to label instruments as `good', independent of their relation to other instruments. The third (iii) interpretation is a comparative notion that labels instruments as good but only relative to other instruments. If an instrument is not excellent in itself but is the best among others, we refer to the instrument as \textit{comparatively excellent}.

We make our analysis precise in \dfn~\ref{def:candidate_instruments}. In what follows, we use $\Delta$, $\Gamma$, $\Sigma, \dots$ to denote action types, $\alpha$, $\alpha_1, \alpha_2 \dots$ to denote agents, $\phi$, $\psi$, $\chi, \dots$ to denote propositions (i.e., descriptions of states of affairs), and $w,v,u,\dots$ to denote moments. The terms `moment' and `state' are used interchangeably and the expression `$\phi$-instrument' abbreviates `instrument to obtain the state of affairs described by $\phi$'. 

\begin{definition}[Definition of Candidate Instruments]\label{def:candidate_instruments} \ 

\begin{itemize}
\item[(1)] \textsc{
candidate
instruments:} An action type $\Delta$ is a candidate $\phi$-instrument for agent $\alpha$ at moment $w$
if and only if (i) $\Delta$ has led to $\phi$ for $\alpha$ at least once in the past of $w$.
\item[(2)] \textsc{excellent candidate instruments:} An action type $\Delta$ is an excellent
candidate 
$\phi$-instrument for agent $\alpha$ at moment $w$
if and only if (i) $\Delta$ is a candidate $\phi$-instrument for $\alpha$ at $w$ and (ii) $\Delta$ has always led to $\phi$ for $\alpha$ in the past of $w$.
\item[(3)] \textsc{better candidate instruments:} An action type $\Delta$ is a better candidate $\phi$-instrument for agent $\alpha$ at moment $w$
than the candidate $\phi$-instruments $\Gamma_1,\ldots,\Gamma_n$ available to $\alpha$ at $w$ if and only if (i) $\Delta$ is a candidate $\phi$-instrument for $\alpha$ at $w$ and (ii) in the past of $w$, action $\Delta$ was more successful for $\alpha$ in guaranteeing $\phi$ 
than $\Gamma_1,\ldots,\Gamma_n$.
\end{itemize}
\end{definition}

We point out that judgments of the type expressed above vary over agents, and since the qualification of instruments is defined relative to the past, these judgments are strictly dependent on a vantage point too. This makes the above three definitions 
\textit{defeasible}: new experience may cause the agent to revise previous judgments. For example, horses may have been the best option for private transport before cars. We discuss defeasibility in detail in Section~ \ref{subsect:philo_expectations}.

\subsection{Different Ways of Comparing}\label{subsect:philo_comparing}

In \dfn~\ref{def:candidate_instruments}, we defined the notion of a `better instrument' in terms of the \textit{relative success} of the instrument in question. In what follows, we address criterion II of \sect~\ref{Sec:introduction} and discuss various ways of comparing the success of candidate instruments. The analysis will yield different definitions of comparative instrumental goodness.

Von Wright does not provide an explicit method for comparing candidate instruments, but we find some hints in his analysis of technical goodness (the goodness of ability, capacity, and skill) \cite{Wri72}.\footnote{Von Wright \cite
{Wri72} explicitly distinguishes \textit{technical} goodness from \textit{instrumental} goodness. The former relates to agents' skills in obtaining results and applying instruments. The latter denotes an impersonal analysis of how instruments serve purposes. We do not differentiate between instrumental and technical goodness, but the logic developed in this paper does allow for reasoning with agent-dependent and -independent notions of instrumentality.
} There are two ways of testing whether an agent excels at guaranteeing a particular result or at performing some action, namely, (i) through \textit{competition} and (ii) through \textit{achievement} \cite[Ch.~2]{Wri72}. Method (i) evaluates personal performance in relation to other agents. In the context of the Olympic games, competition is a means to determine which of all candidate athletes excels at, say, the javelin throw. Such competition can be defined in terms of `who comes in first' in a single game (the absolute best) or in terms of who ended first in a sequence of games (the overall best). Method (ii) evaluates an agent's performance of an action, not in light of other agents acting, but in relation to a predetermined threshold of excellence. Achievement may be defined as beating a predetermined time or record (which may or may not be previously set by other agents) or achieving a qualifying threshold. 
 Such thresholds function as markers of a specific grade of excellence.

Following von Wright \cite[p.~34]{Wri72}, both (i) and (ii) are methods of establishing goodness by `degree of distinction’, thus constituting a comparison. Method (i) hints at judging goodness on the basis of ordering the success ratios of participating candidates, whereas method (ii) hints at the usage of pre-established thresholds that suitable candidates must pass.

Translating the above to 
instrumental goodness, we find two notions of comparison. The first compares a candidate instrument to alternative candidates, which calls for an ordering of (all) suitable instruments serving the purpose at hand. First, we gather all instruments that potentially deserve the label `good-instrument', these are candidate instruments (see \dfn~\ref{def:candidate_instruments}). Then, for each candidate, we collect its successful and unsuccessful applications 
in attaining the desired outcome. 
Thus, we construct a success-failure ratio for each candidate instrument. 
An instrument comparatively excels in serving a particular purpose if it has the highest success ratio among its alternatives. 

However, we point out that this ordering might fail to properly assess whether an instrumentality relation exists between $\phi$ and available actions $\Gamma_1,\dots,\Gamma_n$. This can happen when some of the considered actions were performed  only a small number of times within the considered time. A very small sample size may have no effective statistical power since the observed connections between $\phi$ and $\Gamma,\dots,\Gamma_n$ could have resulted from mere chance. In such cases, it is reasonable to set thresholds such that one can avoid engaging in a judgment of those actions that have not been sufficiently tested.

The second method considers a single candidate instrument and checks whether it has satisfied a certain threshold. We identify two types of threshold. The first consists in setting a minimum threshold of the success-failure ratio. For instance, a candidate $\phi$-instrument is a good $\phi$-instrument if it guarantees the effect $\phi$ at least half of the time. Although such a threshold is reasonable, there are problems with it. Namely, suppose I throw a dart at the bullseye and end up hitting the bullseye during my first throw (without any practice). Naively, for me throwing darts is an excellent instrument for hitting the bullseye since, thus far, I have always been successful.  
Often, it makes sense to require an additional threshold in the light of which the candidate instrument must be evaluated. This is the second type: we may require that the instrument has been at least applied $n$-many times in attempting to attain the end in question (e.g., think of products that must be tested before they can be sold).

Likewise, time plays a role in setting thresholds: imposing a threshold on the number of applications entails imposing a minimum length on the time interval considered. 
One needs to ensure that the time interval allows for a number of action performances that is at least equal to the threshold. Setting such a limit has another function: it excludes cases that lie too far in the past. For instance, in determining whether I excel at hitting the bullseye it suffices to consider, say, my last 100 attempts. Without a limit, past cases when I was still learning how to play darts would be included, thus misrepresenting my current skills.

Based on the above, we propose the following two definitions of comparison. 
\begin{definition}[Notions of Comparison]\label{def:comparison_notions} A candidate $\phi$-instrument $\Delta$ can be evaluated with respect to:
\begin{enumerate}
\item  other candidate $\phi$-instruments $\Gamma_1,\ldots,\Gamma_n$ based on a success-failure ratio.
\item a  threshold:

\subitem (i) on a lower bound of past co-occurrences of $\Delta$ and $\phi$;

\subitem (ii) on a lower bound of the ratio of past co-occurrences of $\Delta$ and $\phi$ versus past occurrences of $\Delta$ without $\phi$;

\subitem (iii) on both (i) and (ii).
\end{enumerate}
\end{definition}

The above 
distinction gives rise to two types of comparative instrumental goodness: (1) goodness in relation to other instruments and (2) goodness in relation to a set threshold. Combinations of (1) and (2) are also possible: e.g., consider the context of the Olympic games, where an athlete must acquire a certain amount of credits in order to enter the games initially (threshold). 
An advantage of adopting approach (1) is that it completely preserves the logical status of comparative goodness by ordering available instruments based on their success ratio, independent of any external measurement such as a threshold. The downside of (1) is that it allows for cases such as `having a lottery ticket is an excellent instrument for winning the lottery since it is the only way to win (despite not being sufficient)'. 
In such cases, a certain threshold is desirable.\footnote{There is another issue concerning the lottery example. Since a myriad of tickets is available for any single lottery draw, and tickets are assigned to buyers randomly, the win-loss proportion should be based on the number of winning tickets vs the number of total tickets. In such cases, reference to an agent is not a primary parameter. The issue relates to the low probability of the instrument serving the purpose. This indicates that some actions may not be suitable as instruments at all. 
$n$ times and winning $n$ times will not make it an excellent instrument. 
There is no sufficient instrument for winning the lottery (perhaps except by buying all the tickets) since winning is beyond the agent's abilities.}

Following von Wright, we can label instruments as `better', `best', and `poorest' by ordering the available instruments. The best instrument will be the instrument which is unsurpassed in its success ratio by any other, and the poorest will be unsurpassed in its low success ratio. A poor instrument, however, is not necessarily a bad instrument \cite[p.~35]{Wri72}. The former does not serve the purpose well, whereas the latter may serve the purpose well but have (legally or socially) undesirable side effects (e.g., think of `opening a parcel by first stealing the knife with which you intend to open the parcel'). 

\subsection{Expectations: The Other Temporal Component}\label{subsect:philo_expectations}

So far, we discussed how the past serves as a fruitful source of information for qualifying relations of instrumentality. Judgments of instrumentality are essential for practical reasoning since the latter concerns how states of affairs can be altered through the intervention of an agent (cf. \cite{Aud89,Cla87,Har71,Raz78}). Practical reasoning is, thus, essentially directed towards the \textit{future}. Through instrumentality judgments, agents may generalise past experience and project it onto the future. This generalisation and projection lies at the heart of the problem of induction. We now address criterion III of \sect~\ref{Sec:introduction}.

In \cite{Wri57}, von Wright deals with the problem of induction. In particular, von Wright divides the induction problem into two parts (see \cite[p.~50]{Wri57}):
\begin{itemize}
\item[(q3)]  How can we demonstrate that the generalisations we make about experienced cases are correct? How do we know that our generalisation is `complete'?
\item[(q4)] How can we demonstrate that such generalisations are reliable for making predictions? That is, how can we extend our generalisations to the future?
\end{itemize}
Interestingly, von Wright's division is temporal: (q3) deals with the past and (q4) with the future. Regarding generalisations extending to the past, one can theoretically acquire universally objective judgments by collecting all past instances of the object under generalisation. However, when it comes to predictions, the problem of induction truly shows itself:
\begin{quote}``Scarcely anybody would pretend that predictions, even when based upon the safest inductions, might not fail sometimes.''~\cite[p.~51]{Wri57}
\end{quote}
Regarding the future, generalisations are inherently defeasible: future information may falsify our earlier judgments. Although not explicitly stated, von Wright's account of instrumentality judgments (which have an inherently inductive nature) appear to incorporate the same temporal distinction between (i) collecting past cases and (ii) extending past generalisations to the future in terms of predictions. We believe that the discussed theory of instrumentality can account for the problem of induction through the role of \textit{expectations} in instrumentality judgments:
\begin{quote}
``Judgments of instrumental goodness, usually, even if not necessarily, contain a conjectural element'' \cite[p.~27]{Wri72}.
\end{quote}
In relation to instrumentality, an expectation is a projection of the past onto the nearby future that the action will continue to serve the intended purpose. 
    
 In \cite{Wri57}, von Wright deals with the general problem of induction as an inherently temporal problem that only arises through the involvement of future cases. Likewise, in the setting of practical reasoning, we find specific roles assigned to the past and the future. We believe that expectation as a defeasible cognitive attitude warrants the connection between universal statements about the past and their projection onto the future. Indeed, one can say that even if an agent experienced relations between actions and outcomes in the past, which are not sufficient to form a stable and universally valid judgment about instrumentality, the experience nevertheless serves to support \textit{context-specific and graded} judgments in an agent's practical deliberation. By limiting such judgments to (defeasible) expectations instead of universal claims, the problem of induction is avoided altogether. Despite being defeasible, such judgments may still guide an agent in decision-making. For instance:  
   
   \begin{quote}
    According to what I have experienced so far, using a knife to cut the tape of a parcel
    has served the goal of opening a parcel  
     to a sufficiently good degree, and I expect it to work out in the same way, at least in the near future. 
Using a knife is a good instrument for realising my goal.
   \end{quote}
Thus, the limits of inductive arguments do not prevent one from formulating instrumentality judgments to decide how to act in the immediate future.

Now we know that the tentative definitions (\dfn~\ref{def:candidate_instruments}) provided in \sect~\ref{subsect:philo_good} do not suffice, and reference to the conjectural element must be included. In judging whether an instrument is suitable for a present purpose, we take our 
past
 experience and project it onto the future, conjecturing that its past success will sustain in the immediate future. Hence, we 
recognise
 two \textit{temporal} components in instrumentality judgments: 
(i) past performance of particular actions subsumed under a
 certain
 type and (ii) the expected continuation of the performance of actions of this type in the nearby future.\footnote{We will see that in the formal framework, the expectations of an agent $\alpha$ correspond to
 those (nearby) future moments that
 $\alpha$ regards \textit{likely to happen}.
 We stress that expectations are not to be confused with epistemic notions of (incomplete) knowledge: an agent can have expectations about the future apart from her knowledge of these expected future moments.}
The first temporal component is related to the empirical part of arguments used to establish instrumentality relations; the second temporal component is related to the inductive part of such arguments (see \sect~\ref{Sec:introduction}).

\begin{figure}
    \centering
\begin{tikzpicture}
\node[world] (w0) [label=below:{}] {$w_0$};
\node[world] (w1) [right=of w0, label=below:{$ \phi $}] {$w_1$};
\node[world] (w2) [right=of w1, label=below:{$ \lnot \phi$}] {$w_2$};
\node[world] (w3) [right=of w2, label=below:{$ \phi$}] {$w_3$};
\node[world] (w4) [right=of w3, label=below:{$ \phi $}] {$w_4$};
\node[world] (w5) [right=of w4, xshift=10pt, yshift=45pt, label=right:{$\quad \phi$}] {$w_5$};
\node[world] (w6) [right=of w4, xshift=10pt, label=right:{$\quad \lnot \phi$}] {$w_6$};
\node[world] (w7) [right=of w4, xshift=10pt, yshift=-45pt, label=right:{$\lnot \phi$}] {$\underline{w_7}$};
\node[draw, dashed, color=black, rounded corners, inner xsep=8pt, inner ysep=8pt, fit=(w5)(w6), label={exp-$\alpha$}] (c1) {};
\path[->,draw] (w0) -- (w1) node [midway, yshift=-7pt] {$\Delta$};
\path[->,draw] (w1) -- (w2);
\path[->,draw] (w2) -- (w3);
\path[->,draw] (w3) -- (w4) node [midway, yshift=-7pt] {$\Delta$};
\path[->,draw] (w4) -- (w5) node [midway, yshift=-7pt] {$\Delta$};
\path[->,draw] (w4) -- (w6);
\path[->,draw] (w4) -- (w7) node [midway, yshift=-7pt] {$\Delta$};
\end{tikzpicture}
   
\caption{Expectations and past generalizations of ``$\Delta$ leading to $\phi$'': agent $\alpha$ \textit{expects} (i.e., `exp-$\alpha$' and the dotted lines) the moments $w_5$ and $w_6$ as possible future continuations of $w_4$, although $w_7$ is the actual (underlined) continuation of $w_4$. The arrows labelled with $\Delta$ represent the transitions designated by the performance of $\Delta$.}
   
\label{fig:expectations_generalizations}
\end{figure}
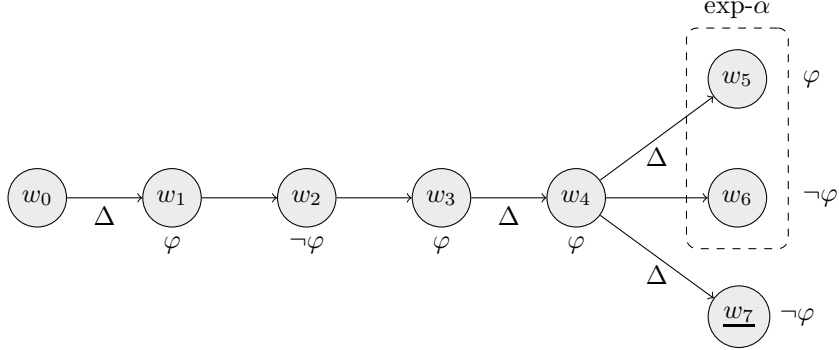

As a basic example of past generalisations and expectations, consider the model presented in \fig~\ref{fig:expectations_generalizations}. Let $\alpha$ be an agent at moment $w_4$. The past (that is, $w_0$ up to $w_4$) provides the agent with the experience that `so far, $\phi$ occurred after every transition caused by $\Delta$'. The agent may generalise this observation to `generally, performing $\Delta$ leads to $\phi$'. Suppose that at $w_4$, the agent believes that `$\Delta$ leading to $\phi$' will continue to hold in the (nearby) future. The moments $w_5$ and $w_6$ express those moments the agent expects to be future continuations of $w_4$. At $w_5$ and $w_6$, it is, in fact, the case that $\Delta$ leads to $\phi$. However, the model shows that the actual future continuation of $w_4$, namely $w_7$, falsifies the generalisation by making $\lnot \phi$ true after the performance of $\Delta$. This captures the idea that future projections are inherently defeasible: an agent's estimations may turn out to be wrong. She may have expected some other future states to be possible or may have wrongly projected her past experience onto the 
future. The inclusion of expectations thus allows us to adequately address criterion III.

Based on the above, we extend \dfn~\ref{def:candidate_instruments} on candidate instruments to include the conjectural element of instrumentality reasoning. The resulting \dfn~\ref{def:philosophical_instruments} (below) additionally refines our take on criterion I (the first two items) as well as criterion II (the last two items). 

\begin{definition}[Four Definitions of Instrument]\label{def:philosophical_instruments} \

\begin{itemize}
\item[(1)] \textsc{instruments:} An action type $\Delta$ is a $\phi$-instrument for agent $\alpha$ at moment $w$ if and only if (i) $\Delta$ has led to $\phi$ for $\alpha$ at least once in the past of $w$ and (ii) $\alpha$ expects at $w$ that $\Delta$ will lead to $\phi$ in the immediate future.

\item[(2)] \textsc{excellent instruments:} An action type $\Delta$ is an excellent $\phi$-instrument for agent $\alpha$ at moment $w$ if and only if (i) $\Delta$ is a
$\phi$-instrument for $\alpha$ at $w$ and (ii) $\Delta$ has always led to $\phi$ for $\alpha$ in the past of $w$. 

\item[(3)] \textsc{better instruments:} An action type $\Delta$ is a \textit{better} $\phi$-instrument for agent $\alpha$ at moment $w$ than the $\phi$-instruments $\Gamma_1,\ldots,\Gamma_n$ available to $\alpha$ at $w$ if and only if (i) $\Delta$ is a  $\phi$-instrument for $\alpha$ at $w$ and (ii) in the past of $w$, action $\Delta$ was more successful for $\alpha$ in guaranteeing $\phi$ 
than $\Gamma_1,\ldots,\Gamma_n$.

\item[(4)] \textsc{good$_n$ instruments:} An action type $\Delta$ is a good$_n$ $\phi$-instrument for agent $\alpha$ at moment $w$ if and only if (i) $\Delta$ is a  $\phi$-instrument for $\alpha$ at $w$ and (ii) $\alpha$'s past performance of $\Delta$ satisfies threshold $n$ at $w$. 

\end{itemize}
\end{definition}
 Since items (2)-(4) in the above definition are based on definition (1), all notions of instrumentality are relative to the agent's expectations about the instrument at the moment of evaluation. In \dfn~\ref{def:philosophical_instruments}, item (3) and (4) refer to judgments of instrumentality involving comparisons, respectively thresholds, as specified in \dfn~\ref{def:comparison_notions}. The logic of actions and expectations introduced in the subsequent sections allows us to capture these definitions formally.

\section{
A Temporal Logic of Actions and Expectations: 
$\lae$}\label{Sec:B}\label{Sec:logic_lae}

On the basis of the analysis provided in \sect~\ref{Sec:philosophy_agency} we will develop a logic that will enable us to formalise the mentioned notions of instrumentality. We will refer to the logic as the
 \textit{Temporal Logic of Actions and Expectations}, henceforth $\lae$, since these will be the main ingredients of its syntax and semantics. $\lae$ is a linguistic and deductive extension of $\laei$, a logic developed in \cite{BerPas18}. From the linguistic point of view, the novelty is the use of operators for temporal reference to the past. From the deductive point of view, the novelty is the use of axioms that characterise the behaviour of the new operators, as well as their interaction with the old ones.

For the sake of a self-contained exposition, we will not assume familiarity with $\laei$ and provide a detailed presentation of the new framework, mentioning differences with the old one when needed. We start by listing all fundamental concepts that we intend to capture and then introduce the formal language of $\lae$ in a rigorous way.

\subsection{Fundamental Concepts Expressed}\label{subsect:language}

\noindent
\textit{Purposes}. These are the desired results of actions, represented by formulas $\phi,\psi,\chi,\ldots$ (occasionally annotated). As a matter of fact, a purpose can be equated with a description of a state of affairs. In principle, it is possible to have very complex descriptions involving, for instance, modal concepts (e.g., that it possibly rains or it possibly snows). Furthermore, descriptions of states of affairs may likewise refer to actions (e.g., the description that the door has been opened or the proposition that an agent will open the door).
\\\\
\noindent
\textit{Actions}.
These can be (candidate) instruments for achieving a purpose. We use $\delta_1, \delta_2, \delta_3,\ldots$ to represent atomic action types and build complex action types $\Delta, \Gamma, \Theta, \ldots$ (possibly annotated) via complementation `$-$' (overline), union `$\cup$', and  intersection `$\cap$' of types (e.g., not-opening the door, opening the door or opening the window, and both opening the door and closing the window, respectively). 
\\\\
\noindent
\textit{Agents}. Actions are performed by agents {, which will be here denoted by $\alpha$, $\beta$, $\gamma$, $\ldots$ (possibly annotated)}.
Different actions may be available to different agents and the skills of a particular agent might influence outcomes of actions. We will codify the performance of an action type $\Delta$ by an agent $\alpha$ via a Boolean formula $\phi$. That is to say, 
 {such performance} is associated with a Boolean formula $\phi$ via a function $t$ 
in the following manner: (i) the  {performance of} an atomic action type 
$\delta$ 
 {by $\alpha$ is} translated via $t$ into  {a} \textit{propositional constant} 
$\mathfrak{d}^{\alpha}$
and (ii) the  {performance of a complex action type $\Delta$ by $\alpha$ is translated via $t$ into a formula $\phi$ whose Boolean structure} 
matches the algebraic structure of $\Delta$ 
 {(as} formally defined below). 
Notice the difference between the font of, e.g., $\delta_1$, which indicates an \textit{action type}, and the font of $\mathfrak{d}_1^{\alpha_1}$, which indicates a \textit{proposition}, namely that an action of type $\delta$ is performed by $\alpha_1$. 
\\\\
\noindent
\textit{Temporal reference and the structure of time.} Von Wright's account of agency takes action as agential change, and change as a transition between moments in time. Consequently, we need to adopt a temporal perspective towards instrumentality as well. In particular, we will take each model for our language to include a set of moments $W$ and evaluate formulae at those moments. 

 {J}udgments of instrumentality refer to the degree in which a certain action, as an instrument, may lead to a certain state of affairs. This `leading to' is the temporal component of instrumentality referring to possible future moments. 
We therefore use a modal operator $\square$, meaning `in all possible immediate next moments'.  {This operator is associated with an accessibility relation between pairs of moments, $R_\square$.}
For instance, let $\mathfrak{d}^{\alpha_1}$ stand for `the door has been opened by agent $\alpha_1$', then the formula $\Box\mathfrak{d}^{\alpha_1}$ is interpreted as `in all possible immediate next moments the door has been opened by agent $\alpha_1$'. We take it that outcomes of actions concern primarily the nearby future, given that an action affects the world as soon as it is performed by an agent. For this reason, in our framework there is no need of using a temporal operator for reference to the whole future of a moment of evaluation (yet, of course, by iterating operator $\square$ it is possible to make reference to a more distant future).

We also use two modal operators referring to the past, $\blacksquare$ and $\hop$, which extend the language of $\laei$. The former is the converse of $\square$ and quantifies over immediate predecessors of a given moment.  The latter is the transitive closure of $\blacksquare$, thus referring to the whole past, not just the immediate past. Hence, $\hop \phi$ reads `everywhere in the past $\phi$ holds'.  {These operators are associated with accessibility relations $R_\blacksquare$ and $R_{\mathsf{H}}$, respectively}. The main benefit of using operators for past reference in $\lae$ is that they allow one to \textit{count} previous performances of an action, keeping track of this counting at the syntactic level. 
 Moreover, counting permits us to compute a success ratio with respect to a certain outcome, giving rise to more refined notions of instrumentality. Such a syntactic procedure of counting was not possible in $\laei$.

Finally, we use an operator $\nbox$
which applies to a formula $\phi$ and says that $\phi$ describes a state of affairs that takes place in the immediate actual future of the moment of evaluation. This allows us to provide a comparison between what the reasoning agent expects to be the case immediately after the moment of evaluation and what will actually be the case. It is associated with an accessibility relation $R_{\nboxs}$ (this operator was originally denoted by $N$ in \cite{BerPas18}).

We stress that there is a twofold asymmetry between the past and the future in our formal framework. First, our indeterministic approach to time allows for several possible immediate successors of a moment, but only one immediate predecessor; i.e., the past of a moment is linear, while its future is possibly branching. Second, the way in which instrumentality judgements are formed, namely through past experience, requires reference to the entire past of a moment, but not to its entire future.
\\\\
\textit{Expectations}.
We employ propositional constants of the form $\expec{i}$ to encode that the most recent expectations of an agent $\alpha_{i}$ are fulfilled. Such formulae further involve the agent's perspective in judging instrumentality relations. Expectations capture the \emph{conjectural element in judgments of instrumentality.}

 \color{black}

\subsection{The Formal Language of $\lae$.}\label{subsect:formal_language}

 Based on the above list, we define two languages: an action language $\langact$, which is an algebra of actions for agent-independent action types, and the logical language $\langlae$ into which these actions will be translated. 
We let $\atomacts := \{\delta_{1}, \ldots, \delta_{n}\}$ be a finite set of \emph{atomic action types} and define the set $\actions$ of \emph{all action types} to be all strings generated by the following BNF grammar:
$$
\Delta ::= \delta_{i} \ | \ \Delta \cup \Delta \ | \ \overline{\Delta}
$$
where $\delta_{i} \in \atomacts$. The $\cup$ operation is used to form a \emph{disjunction of action types} (e.g., ‘turning-left or turning-right’) and the --- operation is used to form a \emph{negation of action types} (e.g., ‘not turning-right’). We take the intersection of actions, denoted by the operation $\cap$, as defined, i.e., $\Delta\cap\Gamma:=\overline{\overline{\Delta}\cup\overline{\Gamma}}$.

We use $\agents := \{\alpha_{1}, \ldots, \alpha_{m}\}$ to denote the set of all agent terms and define an \emph{agent-bound action type} to be an expression of the form $\Delta^{\alpha_{i}}$, where $\Delta \in \actions$ and $\alpha_{i} \in \agents$. For any $\alpha_{i} \in \agents$, we let $\wit{\alpha_{i}} := \{ \actag{1}, \ldots, \actag{n}\}$ be the set of propositional constants respectively witnessing the performance of action types $\delta_{1}, \ldots, \delta_{n}$ by $\alpha_{i}$. 
For instance, suppose $\delta_1$ stands for `opening the door', then we read its corresponding witness $\mathfrak{d}_1^{\alpha_1}$ as `the door has been opened by agent $\alpha_1$'. We make the correspondence between agent-bound action types and propositional constants formally precise below. Notice that $|\wit{\alpha_{i}}| = |\actions| = n$. We use $\witset$ to denote the set $\bigcup_{\alpha_{i} \in \agents} \wit{\alpha_{i}}$.  

The language $\lang$ of $\lae$ is defined via the following BNF grammar:
$$
\phi ::= p \ | \ \expec{i} \ | \ \act{i} \ | \ \neg \phi \ | \ \phi \rightarrow \phi \ | \ \wbox \phi \ | \ \nbox \phi \ | \ \bbox \phi \ | \ \h \phi
$$
where $p$ is any \emph{propositional variable} from the set $\var := \{p_{k} \ | \ k \in \mathbb{N} \}$, $i \in \{1, \ldots, m\}$, and $j \in \{1, \ldots, n\}$. We use $p$, $q$, $r$, $\ldots$ (possibly annotated) to denote propositional variables, and $\phi$, $\psi$, $\chi$, $\ldots$ (possibly annotated) to denote formulae from $\lang$. The connectives $\lnot$ and $\rightarrow$ denote `negation' and `material implication', respectively. The interpretation of each modal operator $\wbox$, $\bbox$, $\nbox$, and $\h$ is given in \dfn~\ref{def:semantics} below; we take $\wdia$, $\bdia$, $\ndia$, and $\p$ to be duals of each respective modal operator. Conjunction $\land$, disjunction $\lor$, and material equivalence $\leftrightarrow$ are defined in the usual way. Last, $\expec{i}$ is a propositional constant witnessing the compatibility of a state with $\alpha_{i}$'s expectations, which can be read as `the present state is compatible with $\alpha_i$'s expectations'.

The translation encoding action types from $\langact$ into agent-indexed formulae of $\langlae$ is established recursively through the following function $t$:
\begin{itemize}
    \item[--] For all $\delta\in \atomacts$, and all $\alpha_i\in\agents$,  $t(\delta^{\alpha_i})=\actag{i}$
    \item[--] For all $\Delta\in\langact$, and all $\alpha_i\in\agents$, $t(\overline{\Delta}^{\alpha_i})=\lnot t(\Delta^{\alpha_i})$
    \item[--] For all $\Delta,\Gamma\in\langact$, and all $\alpha_i\in\agents$, $t(\Delta^{\alpha_i} \cup \Gamma^{\alpha_j})=t(\Delta^{\alpha_i})\lor t(\Gamma^{\alpha_j})$
\end{itemize}
The advantage of this translation is that it enables us to reason with actions on the object language level while simultaneously distinguishing such formulae from other (non-action) formulae in the language. This distinction will prove beneficial in (i) defining a variety of modal instrumentality operators in \sect~\ref{Sec:formal_instrumentality} and (ii) axiomatising action specific properties. 
 
To give an example of the expressive power of $\langlae$, we briefly recall the three agentive notions of \textit{would}, \textit{could}, and \textit{will}, as discussed and defined in \cite{BerPas18}.

{\setstretch{\s}\vspace{-0.2cm}
\begin{enumerate}
 [label={},ref=$\mathsf{d\arabic*}$, leftmargin=0.5cm]
   \setcounter{enumi}{0}
\item \label{defq:would} \textit{Would}\\
$[\Delta^{\alpha_i}]^{would}\phi := \square
(t(\Delta^{\alpha_i})\rightarrow\phi)$.
\hfill (\ref{defq:would})
\item \label{defq:could}
\textit{Could}\\
$[\Delta^{\alpha_i}]^{could}\phi := \square
(t(\Delta^{\alpha_i})\rightarrow\phi)\wedge
\lozenge t(\Delta^{\alpha_i})$.
\hfill (\ref{defq:could})
\item \label{defq:will} \textit{Will}\\
$[\Delta^{\alpha_i}]^{will}\phi := \square
(t(\Delta^{\alpha_i})\rightarrow\phi)\wedge \ndia   t(\Delta^{\alpha_i})$.
\hfill (\ref{defq:will})
\end{enumerate}
}

The formula $[\Delta^{\alpha_i}]^{would}\phi$ (\ref{defq:would}) means that `at the current state, by behaving in accordance with $\Delta$, $\alpha_i$ would bring about $\phi$'. The formula $[\Delta^{\alpha_i}]^{could}\phi$ (\ref{defq:could}) means that `at the moment of evaluation, by behaving in accordance with $\Delta$, $\alpha_i$ would bring about $\phi$ and $\alpha_i$ could (i.e., is able to) behave in accordance with $\Delta$'. Finally, the formula $[\Delta^{\alpha_i}]^{will}\phi$ (\ref{defq:will}) means that `at the moment of evaluation, by behaving in accordance with $\Delta$, $\alpha_i$ would bring about $\phi$ and $\alpha_i$ will behave in accordance with $\Delta$'. One can obtain multi-agent variants of the above modalities, such as $[\Delta^{\alpha} \cap \Gamma^{\beta}]^{could}\phi$, referring to the agents $\alpha$ and $\beta$'s ability to jointly secure $\phi$. As an example, let $\Delta$ be the generic action `push' and let $\phi$ stand for `the trolley is rolling'. Then, the formula $[\Delta^{\alpha} \cap \Delta^{\beta}]^{will}\phi$ reads  `$\alpha$ and $\beta$ will both push to ensure the trolley is rolling'.

There are relevant connections between our modalities `could', `would' and `will' and other operators used in the literature on agency logics. For instance, in the tradition of STIT logics the core ingredients are agent-relative operators which connect an agent's (or a group of agents') behaviour to a certain outcome. The most basic of these operators is $[\alpha \; stit]$, which is extensively analysed in \cite{BelPerXu01}. A formula of the form $[\alpha \; stit]\phi$ means that agent $\alpha$ behaves in such a way so as to ensure that $\phi$ holds. Particularly relevant to our setting is the variant of this operator denoted by $[\alpha \; xstit]$ and analysed, e.g., in \cite{Bro11a,Bro11b}, since it involves a temporal shift towards the immediate future:  $[\alpha \; xstit]\phi$ means that agent $\alpha$  behaves in such a way so as to ensure that $\phi$ holds \textit{immediately after}. The role played by these operators in STIT  logics is here captured by the `will' modality. As a matter of fact, the formula $[\Delta^{\alpha_i}]^{will}\phi$ can be regarded as a $\lae$-version of $[\alpha \; xstit]\phi$  which additionally includes information about the action chosen by the agent to get the result. 

There are also significant connections between our approach and proposals to integrate reference to actions in the framework of STIT logics. For instance, the formalism introduced by Xu in \cite{Xu2010} includes formulas of the form $[\alpha,\Delta]\phi$, meaning that agent $\alpha$ obtains $\phi$ by doing $\Delta$. This formalism also includes tense-logical operators for future reference, although not for reference to the immediate future. In our framework the situation is reversed: we have operators that make reference to the immediate future and do not have operators that make reference to the whole future (as explained in Section \ref{subsect:language}).
  
Moreover, as pointed out in \cite{BerPas18},  the `would' modality employed here resembles operators used in  propositional dynamic logic (PDL) 
\cite{FisLad79}. In a broader perspective, our approach can be seen as a reduction of PDL to alethic modal logic with constants witnessing the performance of actions, similar to the  reduction of deontic logic to alethic modal logic proposed by Anderson in
\cite{AndMoo57}.\footnote{One could also add a counterfactual component to the above definitions, namely, the formula $\lozenge \neg \phi$ as a  conjunct. This would strengthen the idea of a causal connection between $\Delta$ and $\phi$ since it may be the case that $\phi$ fails to hold in the immediate future. A counterfactual component is also taken into account in one of  Anderson's strategies to reduce deontic logic, as well as in some STIT logics  (see, e.g., the deliberative STIT operator in ~\cite{BelPerXu01}).}

Finally, we would like to mention that our account of  `would', `could' and `will' is also related to proposals that represent concepts of STIT logic within dynamic logics or logics of temporal computation. For instance, the logic $\mathcal{DLA}$ proposed by Herzig and Lorini  \cite{HerLor10,LorSch17} is based on a language including an agent-indexed operator $[a:  \alpha]$ such that the formula $[a:  \alpha]\phi$ means that agent $a$ ensures that $\phi$ will happen by performing an action of type $\alpha$; this interpretation is very close to our analysis of `will'. Furthermore, Boudou and Lorini in \cite{BouLor18} propose to integrate STIT operators in the  computational logic $\mathsf{CTL}^*$, whose language is endowed with temporal operators for `next-time' ($\mathsf{X}$) and `until' ($\mathsf{U}$). The expressiveness of future reference in the latter setting goes beyond the one allowed for in ours.

\subsection{Semantics and Axioms of $\lae$}
We adopt relational frames for the semantic characterisation of the logic $\lae$. The proposed frame properties will be motivated by the discussion presented in \sect~\ref{Sec:philosophy_agency}. In brief, we will define irreflexive tree-like structures that are linear with respect to the past, and allow for branching with respect to the future. Irreflexivity is motivated by the fact that a moment cannot be its own immediate successor. In other words, the transitive closure of the immediate successor relation is a strict partial order.  One of the advantages of employing irreflexive structures is that it allows for counting with respect to the past, that is, $\bdia$ and $\bdia\bdia$ refer to two distinct moments in the past, the first immediately preceding the moment of evaluation and the second immediately preceding the first. Henceforth, we use $\bdia^i$ ($i\in\mathbb{N}$) to refer to a concatenation of $i$-many $\bdia$ operators, referring to a moment $i$ time units in the past. This feature will be employed in counting successful applications of an instrument serving a certain purpose, thus facilitating the comparative notions of instrumentality from 
\sect~\ref{Sec:philosophy_agency}. 

First, we define \quasiframes \ (\dfn~\ref{def:quasi_frames}), which will subsequently be refined to form the class of envisaged $\lae$-frames (\dfn~\ref{def:lae_frames}). Frames and truth-conditions are defined as usual, with the exception that the interpretation of constants will not be varying per model, but will be fixed on the level of frames (\dfn~\ref{def:semantics}). Namely, the valuation of constants is fixed to sets of moments defined on the frame level which means that the semantic interpretation of such constants is fixed for every model defined over a frame. One advantage of interpreting constants on the level of frames is that we can define frame properties (and corresponding axioms) that will restrict the logical behaviour of certain constants, thus enhancing the expressiveness of the formal setting.

\begin{definition}[\quasiframe, \quasimodel]\label{def:quasi_frames} A \emph{\quasiframe} is a tuple
$$
\mathfrak{F} = \langle W, \{W_{\act{i}} \ | \ \act{i} \in \witset\}, \{W_{\expec{i}} \ | \ \alpha_{i} \in \agents\}, R_\wbox, R_{\nboxs}, R_{\bbox}, R_{\hboxx} \rangle
$$
where $W$ is a set of moments $w$, $u$, $v$, $\ldots$ (which are occasionally annotated), $W_{\act{i}}, W_{\expec{i}} \subseteq W$, and $R_\wbox$, $R_{\nboxs}$, $R_{\bbox}$, and $R_{\hboxx}$ are binary relations over $W$.

A \emph{\quasimodel} $\mathfrak{M} = \langle \mathfrak{F}, V \rangle$ is a tuple 
where $\mathfrak{F}$ is a \quasiframe, and $V$ is a valuation function mapping propositional atoms 
to sets of moments such that:

\begin{itemize}

\item[--] $V( \act{i}) := W_{ \act{i}}$

\item[--] $V(\expec{i}) := W_{\expec{i}}$

\end{itemize}

\end{definition}

\begin{definition}[Truth-conditions]\label{def:semantics}
Formulae are evaluated at a state of a model, along the following lines:

\begin{itemize}

\item[--] $\mathfrak{M},w \models p$ \ifandonlyif $w \in V(p)$, for any propositional variable $p \in \var$

\item[--] $\mathfrak{M},w \models \neg \phi$ \ifandonlyif $\mathfrak{M},w \not\models \phi$

\item[--] $\mathfrak{M},w \models \phi \rightarrow \psi$ \ifandonlyif $\mathfrak{M},w \not\models \phi$ or $\mathfrak{M},w \models \psi$

\item[--] $\mathfrak{M},w \models \wbox \phi$ \ifandonlyif for all $v \in W$ s.t. $R_\wbox wv$, it follows that $\mathfrak{M},v \models \phi$

\item[--] $\mathfrak{M},w \models \nbox  \phi$ \ifandonlyif for all $v \in W$ s.t. $R_{\nboxs}wv$, it follows that $\mathfrak{M},v \models \phi$

\item[--] $\mathfrak{M},w \models \bbox \phi$ \ifandonlyif for all $v \in W$ s.t. $R_\bbox wv$, it follows that $\mathfrak{M}, v \models \phi$

\item[--] $\mathfrak{M},w \models \h \phi$ \ifandonlyif for all $v \in W$ s.t. $R_H wv$, it follows that $\mathfrak{M},v \models \phi$

\end{itemize}

The semantic clauses for $\wdia$, $\ndia$, $\bdia$, $\p$, $\land$, $\lor$, and $\leftrightarrow$ are defined as usual.

\end{definition}

For an arbitrary $\Delta^{\alpha_{i}}$ such that $\Delta\in \actions$ and $\alpha_{i} \in \agents$, we define $\w{t(\Delta^{\alpha_{i}})}$ using the following recursive clauses:
\begin{itemize}

\item[--] $\w{t(\delta_j^{\alpha_{i}})} := \w{\mathfrak{d}^{\alpha_{i}}_j}$

\item[--] $\w{t(\overline{\Delta}^{\alpha_{i}})} := W {-} \w{t(\Delta^{\alpha_{i}})}$

\item[--] $\w{t(\Delta^{\alpha_{i}} \cup \Gamma^{\alpha_{j}})} := \w{t(\Delta^{\alpha_{i}})} \cup \w{t(\Gamma^{\alpha_{j}})}$

\end{itemize}
It can be easily shown (through induction on the complexity of $\Delta^{\alpha_i}$) that for each $\Delta\in\langact$, 
each $\alpha_i\in\agents$, each \quasimodel \ $\mathfrak{M}$ and each moment $w$ in its domain, we have:
$$
\mathfrak{M},w\models t(\Delta^{\alpha_i}) \text{ \ifandonlyif } w\in W_{t(\Delta^{\alpha_i})}
$$
Hence, we obtain the following semantic interpretation of our previously defined operator $[\Delta^{\alpha_i}]^{would}\phi$ (page~\pageref{defq:would}): 
$$
\mathfrak{M},w\models [\Delta^{\alpha_i}]^{would}\phi \text{ \ifandonlyif for all } v\in W \text{ s.t. } R_{\wbox}wv, \text{ if }  v\in W_{ t(\Delta^{\alpha_i})}  \text{ then } \mathfrak{M},v\models\phi
$$
In other words, every immediate successor witnessing the performance of $\Delta$ by an agent $\alpha_{i}$ guarantees the truth of $\phi$. 

\begin{definition}[\laeframe, \laemodel]\label{def:lae_frames} We define a \emph{\laeframe} to be a \quasiframe \ satisfying the following properties:

\vspace{-0.4cm}
\begin{center}
\renewcommand{\arraystretch}{1.4}
\begin{tabular}{l p{9.5cm}}

p(A3)  & For all $w,u,v \in W$, if $R_{\nboxs}wu$ and $R_{\nboxs}wv$, then $w=v$;\footnote{Item $p(A3)$ expresses the functionality of the operator $\nbox$, which is a widespread property among operators used in the literature on agency logic in order to make reference to the actual future. See, e.g., approaches offering a fusion of the next-time operator and STIT operators, such as \cite{HerLor10} and \cite{Bro11a}.}\\

p(A4) & For all $w, u \in W$, if $R_{\nboxs}wu$, then $R_\wbox wu$;\\

p(A5)  & For all $w \in W$ and all distinct agents $\alpha_{1}$, $\ldots$, $\alpha_{n}$, if there are (not necessarily distinct) action types $\Delta_{1}$, $\ldots$, $\Delta_{n}$ s.t. for $1 \leq i \leq n$ there is a $u_{i} \in W$ s.t. $R_\wbox wu_{i}$ and $u_{i} \in \w{t(\Delta_{i}^{\alpha_{i}})}$, there is a $v \in W$ s.t. $R_\wbox wv$ and $v \in \w{t(\Delta_{1}^{\alpha_{1}})} \cap \cdots \cap \w{t(\Delta_{n}^{\alpha_{n}})}$;\\

p(A6)  & For all $w \in W$ and $\alpha_{i} \in \agents$, if there is a $v \in W$ s.t. $R_\wbox wv$ and $v \in \w{\expec{i}}$, then there is also a $u \in W$ s.t. $R_\wbox wu$ and $u \not\in \w{\expec{i}}$.\\

p(A10;A11)  & For all $w,v \in W$, $R_\wbox wv$ iff $R_{\bbox}vw$;\\

p(A12)  & For all $w,u,v \in W$, if $R_{\bbox}wu$ and $R_{\bbox}wv$, then $u = v$;\\

p(A9;A14) & $R_{\hboxx}$ is the transitive closure of   $R_{\bbox}$ (that we will occasionally represent as $R_{\bbox}^{+}$);\\

p(A13)  & For all $w\in W$ either (i) there is no $v$ s.t. $R_{\h} wv$ or (ii) there is some $u$ s.t. $R_{\h} wu$ and there is no $z$ s.t. $R_{\h} uz$. \\ 
\end{tabular}
\end{center}

\noindent A \laemodel \ $\mathfrak{M} = \langle \mathfrak{F},V \rangle$ is a \quasimodel \ such that $\mathfrak{F}$ is a \laeframe. 
\end{definition}

For the sake of comparability, we have named the frame properties in reference to the axioms of $\lae$ introduced below (see \dfn~\ref{def:axiomatization}). Some of the properties presented above correspond to a set of axioms as opposed to a single axiom; e.g., $p(A9;A14)$ is a property characterised through a combination of the axioms $A9$ and $A14$. 

Let us briefly explain the intuitive meaning of each property: $p(A3)$ ensures that every moment has at most one immediate actual successor. This conveys a simplification of the framework that we exploit to discuss various examples in the rest of the article and that allows us to remain more closely related to the original framework formulated in \cite{BerPas18}. However, reference to $p(A3)$, and the corresponding axiom discussed below, $A3$, can be safely omitted from the technical part of the article (basically, one only has to remove reference to it in the proof of Lemma \ref{lm:canonical-model-is-lae-model}). 
$p(A4)$ states that this 
successor must be a possible successor. $p(A5)$ expresses the agency property known as \textit{independence of agents}.\footnote{Independence of agents is a fundamental property 
in the setting of STIT logic, where it expresses that any choice available to an agent at a certain moment is compatible with any combination of choices available to the other agents at that moment. In STIT, choices are primitive objects and no object-language reference is made to actions. Here, following \cite{BerPas18}, we adopt independence of agents in a setting in which we have explicit actions available to agents. Such a property ensures that, if an action $\Delta$ is a $\phi$-instrument for $\alpha$, then failing to produce $\phi$ by performing $\Delta$ cannot be caused by the interference of other agents. In other words, $\Delta$ is a proper $\phi$ instrument for $\alpha$. Nevertheless, we will discuss possible failures due to the agent's (false) expectations about the action in question. Future work may be directed to investigating instrumentality in the light of interfering agents, i.e., by dropping independence of agents. See \cite{BelPerXu01} for a discussion.} This property ensures that if an agent can perform a certain action at a certain moment that agent 
can perform the action at issue irrespective of the actions performed by the other agents. $p(A6)$ ensures that if an agent expects a certain next moment to arise, then there will be another possible next moment that the agent does not expect to arise. $p(A10;A11)$ defines immediate future moments as the inverse of immediate past moments. $p(A12)$ states that the past is linear. $p(A9;A14)$ defines $R_{\h}$ as the transitive closure of $R_{\bbox}$. Last, $p(A13)$ ensures that the past is finite, that is, time has a beginning. This last property will prove useful to comparing candidate instruments serving the same purpose. 

We point out the following fact: 

\begin{theorem}\label{thm:lae_frames_irreflexive}
$\lae$-frames are irreflexive, that is, for all $w\in W$ of a $\lae$-frame $\mathfrak{F}$, we have $(w,w)\not\in R_{\bbox}$, $(w,w)\not\in R_{\wbox}$, $(w,w)\not\in R_{\h}$, and $(w,w)\not\in R_{\nboxs}$.
\end{theorem}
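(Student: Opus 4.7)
The plan is to reduce the four irreflexivity claims to a single core claim about $R_{\h}$ and then derive that core claim from $p(A13)$ together with the functionality of $R_{\bbox}$ given by $p(A12)$.

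First I would collapse the four cases via the structural properties relating the relations. By $p(A4)$ we have $R_{\nboxs} \subseteq R_{\wbox}$, so irreflexivity of $R_{\wbox}$ yields irreflexivity of $R_{\nboxs}$. By $p(A10;A11)$, $R_{\wbox} w v$ iff $R_{\bbox} v w$; in particular $R_{\wbox} w w$ iff $R_{\bbox} w w$, so irreflexivity of $R_{\bbox}$ yields irreflexivity of $R_{\wbox}$. Finally, since $p(A9;A14)$ makes $R_{\h}$ the transitive closure of $R_{\bbox}$, we have $R_{\bbox} \subseteq R_{\h}$, so irreflexivity of $R_{\h}$ yields irreflexivity of $R_{\bbox}$. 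Hence it suffices to show that $R_{\h}$ is irreflexive.

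Second, I would argue by contradiction for the core claim. Suppose $R_{\h} w w$ for some $w \in W$. Then $w$ has an $R_{\h}$-successor (namely $w$ itself), so clause (i) of $p(A13)$ fails, and clause (ii) must hold. Thus there exists $u \in W$ with $R_{\h} w u$ such that $u$ has no $R_{\h}$-successor. The strategy is to contradict this by showing that, under $R_{\h} w w$, every $R_{\h}$-successor of $w$ has some $R_{\h}$-successor of its own.

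Third, and this is the main step, I would exploit the functionality of $R_{\bbox}$ asserted by $p(A12)$. Since $R_{\h} = R_{\bbox}^{+}$, the assumption $R_{\h} w w$ unfolds to a chain $w = v_{0}, v_{1}, \ldots, v_{n} = w$ with $R_{\bbox} v_{i} v_{i+1}$ and $n \geq 1$. By functionality, the $R_{\bbox}$-successor of each $v_{i}$ is uniquely $v_{i+1}$ (indices mod $n$), so the sequence forms a cycle. Any $R_{\h}$-successor of $w$ arises by iterating the functional $R_{\bbox}$-successor map from $w$, hence coincides with some $v_{k}$ on this cycle; traversing the cycle once from $v_{k}$ gives $R_{\h} v_{k} v_{k}$, showing that every such successor itself has an $R_{\h}$-successor. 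In particular the witness $u$ obtained in step two must have an $R_{\h}$-successor, contradicting its defining property.

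The main obstacle is the last step: turning the informal cycle picture into a rigorous inductive argument that uses only functionality of $R_{\bbox}$ and the transitive-closure description of $R_{\h}$. The key subtlety is verifying that a witness chain for $R_{\h} w w$ is rigidly determined by the functional partial map $R_{\bbox}$, so that one genuinely obtains a cycle on which every node comes back to itself, rather than a mere acyclic path that happens to reach $w$ twice.
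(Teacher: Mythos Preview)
Your proposal is correct and follows essentially the same approach as the paper: reduce all four irreflexivity claims to irreflexivity of $R_{\h}$ via $p(A4)$, $p(A10;A11)$, and $R_{\bbox}\subseteq R_{\h}$, then derive a contradiction from $R_{\h}ww$ by combining $p(A13)$ with the cycle in $R_{\bbox}$ forced by functionality $p(A12)$. The only cosmetic difference is that the paper carries out the cycle argument via an explicit case split on whether the witness chain for $R_{\h}wu$ is a prefix of the witness chain for $R_{\h}ww$ or conversely, whereas you phrase it more directly as ``functionality forces every $R_{\h}$-successor of $w$ to lie on the cycle''; these are the same argument.
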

\begin{proof}
Suppose that $R_{\h}ww$. By $p(A13)$, there is some $u\neq w$ s.t. $R_{\h}wu$ and there does not exist a $z$ such that $R_{\h}uz$. By $p(A9;A14)$ there is a sequence of moments $\sigma=v_1,\dots,v_n$ s.t. $v_1=w$, $v_n=u$ and, for $1\leq i \leq n-1$, $R_{\bbox}v_iv_{i+1}$. Furthermore, there is a sequence of moments $\sigma'=v_1',\ldots,v_m'$ s.t. $v_1'=v_m'=w$ and for $1\leq i \leq m-1$, $R_{\bbox}v_i'v_{i+1}'$. 
 We now have three cases to consider due to $p(A12)$ and the fact that $v_1=v_1'$: either (i) $\sigma$ is a proper sub-sequence of $\sigma'$, (ii) $\sigma'$ is a proper sub-sequence of $\sigma$, or (iii) $\sigma = \sigma'$. We show (ii) as (i) and (iii) are simple. Since $\sigma'$ is a sub-sequence of $\sigma$, we know that for some $1 \leq i \leq n-1$, $v_m' = w = v_i$. It follows that $R_{\bbox}wv_{i+1}$. Moreover, we have that $R_{\bbox}wv_{2}'$ by the definition of $\sigma'$, which implies that $v_{i+1} = v_{2}'$ by $p(A12)$. Continuing in this way, one can show that $v_{i+2} = v_{3}'$,  $v_{i+3} = v_{4}'$, etc. It follows that for some $1 \leq j \leq m$, $v_{j}' = v_{n} = u$. However, since $\sigma'$ forms a cycle, this implies that there is some $z$ (namely, $v_{j+1}'$) such that $R_{\bbox}uv_{j+1}$, which further entails that $R_{\h}uv_{j+1}$ by $p(A9;A14)$. This gives a contradiction.
 
 By $p(A9;A14)$ we can infer that for each $w$, $(w,w)\not\in R_{\bbox}$, whence, by $p(A10;A11)$, that $(w,w)\not\in R_{\wbox}$ and finally, by $p(A4)$, that $(w,w)\not\in R_{\nboxs}$.
\end{proof}

\begin{corollary}\label{cor:lae_acyclic}
The relations $\Rwb, \Rbb$, $\Rh$, and $R_{\nboxs}$ of $\lae$-frames are acyclic. 
\end{corollary}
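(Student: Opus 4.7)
The plan is to derive acyclicity for each relation from the irreflexivity result in Theorem \ref{thm:lae_frames_irreflexive}, together with the structural frame properties $p(A4)$, $p(A9;A14)$, and $p(A10;A11)$. Recall that a relation $R$ on $W$ has a cycle exactly when there exist $w_1,\ldots,w_n \in W$ (with $n \geq 1$) such that $R w_i w_{i+1}$ for each $1 \leq i < n$ and $R w_n w_1$; equivalently, $R^{+}$ has a reflexive element.

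First I would handle $R_{\h}$: since $R_{\h}$ is the transitive closure of $R_{\bbox}$ by $p(A9;A14)$, it is already transitive, so any cycle in $R_{\h}$ yields some $w$ with $R_{\h}ww$, contradicting Theorem \ref{thm:lae_frames_irreflexive}. Next, for $R_{\bbox}$, suppose we had a cycle $w_1,\ldots,w_n$ in $R_{\bbox}$. By $p(A9;A14)$, $R_{\bbox} \subseteq R_{\h}$ and $R_{\h}$ is transitive, so iterating yields $R_{\h} w_1 w_1$, again contradicting irreflexivity.

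For $R_{\wbox}$, suppose $w_1,\ldots,w_n$ is a cycle in $R_{\wbox}$. By $p(A10;A11)$, $R_{\wbox} uv$ iff $R_{\bbox} vu$, so reading the cycle backwards gives $R_{\bbox} w_{i+1} w_i$ for each $i$ (indices modulo $n$), producing a cycle in $R_{\bbox}$, which was just excluded. Finally, for $R_{\nboxs}$, the property $p(A4)$ gives $R_{\nboxs} \subseteq R_{\wbox}$, so any cycle in $R_{\nboxs}$ is immediately a cycle in $R_{\wbox}$, contradicting the previous case.

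I do not anticipate any real obstacle here: all four cases reduce, by the containments and the converse relationship provided by the frame properties, to the already-established irreflexivity of $R_{\h}$. The only point to be slightly careful about is the definitional one, namely that on an irreflexive transitive relation (here, $R_{\h}$) the absence of cycles is equivalent to irreflexivity, since any cycle $w_1,\ldots,w_n,w_1$ collapses under transitivity to $R_{\h}w_1w_1$.
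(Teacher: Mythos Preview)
Your argument is correct and is exactly the natural route: reduce each relation, via $p(A4)$, $p(A10;A11)$, and $p(A9;A14)$, to a reflexive point of the transitive relation $R_{\h}$, and then invoke Theorem~\ref{thm:lae_frames_irreflexive}. The paper states the corollary without proof, so you have simply supplied the intended details.
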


Our axiomatisation for the logic $\lae$ is given below.

\begin{definition}[$\lae$ axiomatisation]\label{def:axiomatization}
The axiomatisation of $\lae$ consists of the following axiom schemes and rules:

\vspace{-0.5cm}
\begin{center}
\renewcommand{\arraystretch}{1.2}
\begin{tabular}{l p{11cm}}
A0 & Any propositional tautology\\
R0 & $\phi, \phi\rightarrow \psi/\psi$\\
A1 & $\wbox (\phi \rightarrow \psi) \rightarrow (\wbox \phi \rightarrow \wbox \psi)$\\
R1 & $\phi/\Box \phi$\\
A2 & $\nbox(\phi\rightarrow \psi) \rightarrow (\nbox \phi\rightarrow \nbox \psi)$\\
A3 & $\ndia \phi \rightarrow \nbox \phi$\\
A4 & $\wbox \phi \rightarrow \nbox \phi$\\
A5 & For any 
distinct $\alpha_1,\dots,\alpha_n \in \agents$ and non-necessarily distinct \\
& $\Delta_1,\dots,\Delta_n \in \actions$, $(\wdia t(\Delta_1^{\alpha_1})\wedge \dots \wedge \wdia t(\Delta_n^{\alpha_n}))\rightarrow \wdia (t(\Delta_1^{\alpha_1})\wedge \dots \wedge t(\Delta_n^{\alpha_n}))$\\

A6 &  For any $\alpha_j \in \agents$, $\wdia \mathfrak{e}^{\alpha_j}\rightarrow \wdia \neg \mathfrak{e}^{\alpha_j}$\\
\end{tabular}
\end{center}
\begin{center}
\renewcommand{\arraystretch}{1.2}
\begin{tabular}{l p{11cm}}
A7 & $\hboxx (\phi \rightarrow \psi) \rightarrow (\hboxx \phi \rightarrow \hboxx \psi)$\\

A8 & $\bbox (\phi \rightarrow \psi) \rightarrow (\bbox \phi \rightarrow \bbox \psi)$\\
A9 & $\h \phi \leftrightarrow (\bbox \phi \land \bbox \h \phi)$\\
A10 & $\phi \rightarrow \wbox \bldia \phi$
\\
A11 & $\phi \rightarrow \bbox \Diamond \phi$
\\
A12 & $\blacklozenge \phi \rightarrow \bbox \phi$
\\
A13 & $\h\bot \vee \hdia \h \bot$
\\
A14 & $\h (\phi \rightarrow \bbox \phi) \rightarrow (\bbox \phi \rightarrow \h \phi)$
\\
R2 & $\phi/\h \phi$\\
\end{tabular}
\end{center}

\noindent
For any formula $\phi \in \lang$, we define $\phi$ to be a \emph{theorem}, and write $\vdash \phi$, \ifandonlyif (i) $\phi$ is an axiom instance, or (ii) $\phi$ is derivable from $\psi$ (and $\chi$) via one of the inference rules where $\vdash \psi$ (and $\vdash \chi$, resp.). We say that $\psi$ is \emph{derivable} from $\Gamma$ in $\lae$, written $\Gamma \vdash \psi$, \ifandonlyif there exist $\phi_{1}, \cdots, \phi_{n} \in \Gamma$ s.t. $\vdash \phi_{1} \land \cdots \land \phi_{n} \rightarrow \psi$.
\end{definition}

 In \dfn~\ref{def:axiomatization} above, we define the notion of a theorem recursively as in Blackburn et al.~\cite[\sect~4.8]{BlaRijVen01}, that is, a theorem is a formula which can be derived via a sequence of axiom instances and rule applications to previoulsy derived theorems. We remark that axioms A1, A2, A7 and A8, together with rules R1, R2, and the derivable rules $A/\nbox A$ and $A/\bbox A$, qualify the modal operators $\wbox$, $\nbox $, $\bbox$, and $\h$ as normal.  Axioms A3 and A12 qualify the accessibility relations associated with $\nbox $ and $\blacksquare$ as functional. Axiom A4 makes the accessibility relation associated with $\square$ a superset of the accessibility relation associated with $\nbox $. Axiom A5 corresponds to the `independence of agents' principle in the STIT-literature~\cite{BelPerXu01}, and states that if each agent can perform a particular action, then all agents can jointly perform such actions. 
 Axioms A9 and A14 are used to express the fact that the accessibility relation associated with $\h$ is the transitive closure of the accessibility relation associated with $\bbox$. Axioms A10 and A11 are used to express the fact that the accessibility relations associated with $\wbox$ and $\bbox$ are reciprocally converse. Finally, axiom A13 says that for any state there is a finite sequence of states related to it via the accessibility relation for $\h$. Taken together, these axioms ensure that the accessibility relations for $\h$, $\wbox$, $\bbox$ and $\nbox$ are irreflexive. However, none of the axioms can ensure this property if taken alone---this follows from general results in correspondence theory for multi-modal, normal logics (cf.~\cite{BlaRijVen01}). We note that the logic $\laei$ originally presented in \cite{BerPas18} is the fragment of $\lae$ without operators $\mathsf{H}$ and $\bbox$ and axiomatized with the deductive principles A0-A6 and R0-R1.

\subsection{Agentive Modals in $\lae$}\label{subsect:volitional_concepts}

In order to get an impression of the expressiveness of $\lae$, we provide some formal definitions of agentive modals in the spirit of von Wright's analysis. Our basic building blocks will be the agentive modals `would' (\ref{defq:would}), `could' (\ref{defq:could}), and `will' (\ref{defq:will}) defined in \sect~\ref{subsect:language}. First, consider the four elementary action types as presented in \fig~\ref{fig:VW_four_types_of_action}: 

{\setstretch{\s}\vspace{-0.2cm}
\begin{enumerate}
 [label={},ref=$\mathsf{d\arabic*}$, leftmargin=0.5cm]
   \setcounter{enumi}{3}
   \item \label{defq:produce}
\textit{Produce}\\
$[\Delta^{\alpha_i}]^{prod} \phi := \lnot \phi \land  [\Delta^{\alpha_i}]^{will} \phi \land \lozenge \lnot \phi$\hfill(\ref{defq:produce})

\item \label{defq:destroy} \textit{Destroy}\\
$[\Delta^{\alpha_i}]^{destr} \phi := \phi \land  [\Delta^{\alpha_i}]^{will} \lnot \phi \land \lozenge \phi$\hfill (\ref{defq:destroy})

\item \label{defq:suppress} \textit{Suppress}\\
$[\Delta^{\alpha_i}]^{supp} \phi := \lnot \phi \land  [\Delta^{\alpha_i}]^{will} \lnot \phi \land \lozenge  \phi$\hfill (\ref{defq:suppress})
\item \label{defq:preserve} \textit{Preserve}\\
$[\Delta^{\alpha_i}]^{pres} \phi := \phi \land  [\Delta^{\alpha_i}]^{will} \phi \land \lozenge \lnot \phi$ \hfill (\ref{defq:preserve})
\end{enumerate}
}

Von Wright's action types are often referred to as \textit{deliberative} in nature; that is, they exclude outcomes which are trivial (e.g., $\top$) and ensure that outcomes are about contingent states of affairs $\phi$, namely, for which $\lozenge\phi$ and $\lozenge \lnot \phi$ hold. As discussed in \sect~\ref{Sec:philosophy_agency}, von Wright's reading of these actions may be too strong, namely, the agent's action decides the faith of $\phi$ completely. For instance, in the case of `producing', through acting the agent ensures $\phi$ whereas through not-acting the agent is able to ensure $\lnot \phi$. In other words, von Wright's account takes the agent's agency as causally sufficient in both directions. In line with our discussion, taking a slightly weaker standpoint (cf.  \textit{causal contribution} in \fig~\ref{fig:indeterministic tree}), we alternatively formalise that the agent has the ability to bring about $\phi$ through performing $\Delta$, but does not bring about $\phi$ by refraining from performing $\Delta$. This is reflected in definitions (\ref{defq:produce}), (\ref{defq:destroy}), (\ref{defq:suppress}), and (\ref{defq:preserve}). (NB. Observe that this position has already been adopted in \cite{Aqv02}.)

By making use of the notions of `would' (\ref{defq:would}) and `could' (\ref{defq:could}), one can provide new versions of the four action types presented above. We call such variations \textit{volitional} concepts. For instance, (\ref{defq:could_destroy}) expresses the idea that an agent $\alpha_i$ could destroy $\phi$ by performing the action $\Delta$. In particular, the first conjunct of (\ref{defq:could_destroy}) states that $\phi$ is presently the case, the second ensures that by performing $\Delta$ the agent $\alpha_i$ would bring about $\lnot \phi$ and $\Delta$ can be performed by $\alpha_i$, and last it is possible that $\phi$ will not be destroyed. 

{\setstretch{\s}
\vspace{-0.2cm}
\begin{enumerate}
 [label={},ref=$\mathsf{d\arabic*}$, leftmargin=0.5cm]
   \setcounter{enumi}{7}
   \item \label{defq:could_destroy} \textit{Could Destroy}\\
$[\Delta^{\alpha_i}]^{could}_{destr} \phi :=  \phi \land [\Delta^{\alpha_i}]^{could} \lnot \phi \land \lozenge \phi$\hfill(\ref{defq:could_destroy})
\end{enumerate}
}

 Last, consider the notion of \textit{forbearance}, which is a stronger agentive notion than merely not acting according to von Wright. To be more precise, forbearing assumes the agent's \textit{ability} to perform the action that is forborne. The formal definition of forbearance (irrespective of its outcome, denoted by `$\top$') is presented in (\ref{defq:forbearance}).

{\setstretch{\s}\vspace{-0.2cm}
\begin{enumerate}
 [label={},ref=$\mathsf{d\arabic*}$, leftmargin=0.5cm]
   \setcounter{enumi}{8}
\item \label{defq:forbearance} \textit{Forbear}\\
 $
[\Delta^{\alpha_i}]^{forb} \top := [\Delta^{\alpha_i}]^{could} \top \land  [\overline{\Delta}^{\alpha_i}]^{will} \top$\hfill(\ref{defq:forbearance})
\end{enumerate}
}

In words, (\ref{defq:forbearance}) reads `the agent $\alpha$ forbears performing action $\Delta$ whenever $\alpha$ could perform action $\Delta$, but will instead perform the action's complement $\overline{\Delta}$'.\footnote{We note that von Wright’s concept of forbearing is different from Belnap et al’s \cite{BelPerXu01} notion of refraining. Briefly, von Wright considers `zero action’ or `passivity’ as a meaningful notion, occurring when an agent does not act and lets the course of nature take over (see page~\pageref{fig:VW_four_types_of_action}). This is captured in the definition of forbearing (\ref{defq:forbearance}), by taking an action corresponding to the negation of all primitive actions. This contrasts with Belnap et al’s account where refraining from acting necessarily corresponds to the agent actively performing some other action.
 } One can see how the notion of forbearance can be extended to incorporate the four elementary action types. The definition provided in (\ref{defq:forbears_produce}) gives an example. 

{\setstretch{\s}\vspace{-0.2cm}
\begin{enumerate}
 [label={},ref=$\mathsf{d\arabic*}$, leftmargin=0.5cm]
   \setcounter{enumi}{9}
\item \label{defq:forbears_produce}\textit{Forbear to Produce}\\
$[\Delta^{\alpha_i}]^{forb}_{prod} \phi := \phi \land [\Delta^{\alpha_i}]^{could}  \phi \land [\overline{\Delta}^{\alpha_i}]^{will} \top \land \lozenge \lnot \phi$\hfill(\ref{defq:forbears_produce})
\end{enumerate}
}

So far, we only considered those temporal operators referring to the future.  We briefly point out that the modularity of combining complex modals of agency extends to reasoning about the past. For instance, one can combine the four elementary action types and the notion of forbearance, with the three notions of `would', `could', and `will', while referring to the agent's past. We investigate reasoning about the past when we formalise  instrumentality in \sect~\ref{Sec:formal_instrumentality}. 

The aim of this section was to demonstrate the high versatility in defining formal notions of agency 
in the language of $\lae$. One of the reasons for this expressiveness relates to the use of action constants. Namely, the use of constants referring to actions allows us to distinctively reason about actions and states of affairs in a highly modular way, combining them freely with the available temporal operators: future, past, and 
actual future. In \sect~\ref{Sec:formal_instrumentality}, we demonstrate how this language can be employed to express various instrumentality notions. Furthermore, we will consider agentive modals that arise by involving the notion of  \textit{expectations}. Before moving to our analysis of instrumentality, we demonstrate that $\lae$ is consistent, sound, and weakly complete.

\section{Soundness and Weak Completeness of $\lae$}\label{Sec:weak_completeness}

Due to the interaction between the two new operators for past reference, $\bbox$ and $\mathsf{H}$ (in particular, the fact that we want one to be the transitive closure of the other), proving the completeness of $\lae$ requires a much more complex construction than the one for $\laei$ provided in \cite{BerPas18}. As a matter of fact, the usual canonical  model construction cannot be used  since the logic $\lae$ is not compact (and hence, 
not \emph{strongly} complete, cf.~\cite{BlaRijVen01}): one can prove that the infinite set $\Sigma=\{\bbox^n p: n\in \mathbb{N}\}\cup \{\neg\mathsf{H}p\}$ has no $\lae$-model, whereas each of its finite subsets has some $\lae$-model.
The strategy followed in this section consists of adapting the Fischer-Ladner construction for the completeness of propositional dynamic logic (illustrated in~\cite[Section 4.8]{BlaRijVen01}) in order to obtain a weak completeness result
for our logic $\lae$.  

First, 
$\lae$ is sound with respect to the class of $\lae$-frames: 
\begin{theorem}[Soundness]
For any formula $\phi\in\langlae$, if $\vdash \phi$, then $\models \phi$. 
\end{theorem}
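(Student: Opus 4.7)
The plan is the standard induction on the length of a derivation establishing $\vdash\phi$. The base case requires showing that every axiom scheme A0--A14 is valid on the class of $\lae$-frames, and the inductive step requires showing that the rules R0 (modus ponens), R1 (necessitation for $\wbox$) and R2 (necessitation for $\mathsf{H}$) preserve validity. The rule cases are routine: R0 follows from the truth clause for $\to$, and R1, R2 follow from the semantic clauses for $\wbox$ and $\h$, since if $\phi$ is true at every point of every $\lae$-model, then so are $\wbox\phi$ and $\h\phi$.

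For the axioms, most correspondences are one-to-one with a frame property. First I would dispatch the easy cases: A0 is a propositional tautology and is trivially valid, while A1, A2, A7, A8 are the usual K-axioms for $\wbox$, $\nbox$, $\h$, $\bbox$ respectively and are valid on any Kripke frame. Next, the axioms with a direct local correspondence: A3 is validated by the functionality condition p(A3); A4 by $R_{\nboxs}\subseteq R_{\wbox}$ (property p(A4)); A5 by the independence-of-agents property p(A5), using the recursive translation $W_{t(\Delta^{\alpha_i})}$ to interpret action formulae; A6 by property p(A6); A10 and A11 jointly by the converse condition p(A10;A11); and A12 by the linearity of the past, p(A12). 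Axiom A9 splits into two implications: the left-to-right direction uses that $R_{\bbox}\subseteq R_{\h}$ and that $R_{\h}$ is transitive (both from p(A9;A14)), and the right-to-left direction uses that any $R_{\h}$-successor is either an $R_{\bbox}$-successor or is reached from an $R_{\bbox}$-successor via $R_{\h}$, which is exactly what it means for $R_{\h}$ to be the transitive closure of $R_{\bbox}$.

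The delicate cases, which I expect to be the main obstacle, are A13 and A14, since they jointly encode a well-founded induction principle on the past. For A13, $\h\bot\vee\hdia\h\bot$, I would directly invoke property p(A13): at any $w$, either (i) there are no $R_{\h}$-successors of $w$, in which case $\mathfrak{M},w\models\h\bot$ vacuously, or (ii) some $R_{\h}$-successor $u$ of $w$ itself has no $R_{\h}$-successors, in which case $\mathfrak{M},u\models\h\bot$ and hence $\mathfrak{M},w\models\hdia\h\bot$. For A14, $\h(\phi\to\bbox\phi)\to(\bbox\phi\to\h\phi)$, I would argue by contradiction: assume $\mathfrak{M},w$ satisfies the antecedent and $\bbox\phi$ but fails $\h\phi$, so there is some $R_{\h}$-successor $v$ of $w$ with $\mathfrak{M},v\not\models\phi$. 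Using p(A9;A14), pick a finite $R_{\bbox}$-chain $w=v_0,v_1,\dots,v_k=v$; such a finite chain exists because $R_{\h}$ is the transitive closure of the functional relation $R_{\bbox}$ (by p(A12)) and, by p(A13) together with Corollary~\ref{cor:lae_acyclic}, the past is acyclic and terminates. Letting $j$ be least such that $\mathfrak{M},v_j\not\models\phi$, one has $j\geq 1$ since $\mathfrak{M},w\models\bbox\phi$ implies $\mathfrak{M},v_1\models\phi$; then $v_{j-1}$ satisfies $\phi$ but its $R_{\bbox}$-successor $v_j$ does not, contradicting $\mathfrak{M},v_{j-1}\models\phi\to\bbox\phi$, which holds because $v_{j-1}$ is an $R_{\h}$-successor of $w$ (or equals $w$) and $\mathfrak{M},w\models\h(\phi\to\bbox\phi)\land(\phi\to\bbox\phi)$, this last conjunct being obtainable by a separate subcase analysis. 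The principal technical care will lie in keeping track of the boundary case $j=1$ and of the moment $w$ itself, which is not an $R_{\h}$-successor of itself by Theorem~\ref{thm:lae_frames_irreflexive}.
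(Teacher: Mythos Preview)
Your proposal is correct and follows the same strategy the paper employs---the paper's own proof is a single sentence deferring to the standard method (``Straightforward by demonstrating that all axioms of $\lae$ are valid for the class of $\lae$-frames and all rules of $\lae$ preserve validity''), and you have simply unpacked that method axiom by axiom. One small tightening: in your A14 argument the claim should be $j\geq 2$ rather than $j\geq 1$ (since $v_1\models\phi$ rules out $j=1$, and the $R_{\h}$-successor $v$ you start from lies at index $\geq 1$), after which the boundary case $v_{j-1}=w$ never arises and your separate subcase analysis becomes unnecessary; also, the finiteness of the $R_{\bbox}$-chain from $w$ to $v$ comes for free from $R_{\h}=R_{\bbox}^{+}$ and needs no appeal to p(A13) or acyclicity.
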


\begin{proof}
Straightforward by demonstrating that all axioms of $\lae$ are valid for the class of $\lae$-frames and all rules of $\lae$ preserve validity (see \cite{BlaRijVen01}).
\end{proof}

Furthermore, we observe that the logic $\lae$ is consistent. 
\begin{theorem}[Consistency]
The logic $\lae$ is consistent.
\end{theorem}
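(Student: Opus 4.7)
The plan is to leverage the soundness theorem just established. Consistency of $\lae$ amounts to $\not\vdash \bot$, and by soundness this follows from the existence of a single $\lae$-model in which $\bot$ fails at some world. So the task reduces to exhibiting one well-defined $\lae$-model and then invoking soundness.

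First, I would select the simplest possible candidate: the frame $\mathfrak{F}_0$ on a singleton domain $W = \{w_0\}$ with every accessibility relation empty, i.e.\ $R_\wbox = R_{\nboxs} = R_\bbox = R_\h = \emptyset$, and with all distinguished sets $W_{\act{i}}$ and $W_{\expec{i}}$ also taken to be $\emptyset$ (any subset of $W$ would do). A valuation $V$ sending every propositional variable to $\emptyset$ yields a \laemodel{} $\mathfrak{M}_0 = \langle \mathfrak{F}_0, V \rangle$, provided $\mathfrak{F}_0$ qualifies as a \laeframe.

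Second, I would run through the properties of Definition~\ref{def:lae_frames} one by one and verify them on $\mathfrak{F}_0$. Properties $p(A3)$, $p(A4)$, $p(A5)$, $p(A6)$, $p(A10;A11)$, and $p(A12)$ are all conditional statements whose antecedents presuppose the existence of an edge in some relation; since every relation is empty, each holds vacuously. For $p(A9;A14)$, the identity $R_\h = R_\bbox^{+}$ reduces to $\emptyset = \emptyset^{+}$, which is immediate. For $p(A13)$, the world $w_0$ has no $R_\h$-successor, so the first disjunct of the property is satisfied. This confirms $\mathfrak{F}_0$ is a \laeframe{} and $\mathfrak{M}_0$ a \laemodel.

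Third, I would close the argument as follows. Since $V(p) = \emptyset$ for every propositional variable, $\mathfrak{M}_0, w_0 \not\models \bot$. Hence $\bot$ is not valid on the class of \laeframes, so by the Soundness theorem $\not\vdash \bot$ in $\lae$. Equivalently, $\lae$ is consistent.

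I do not anticipate any genuine obstacle here; the only clause that initially looks delicate is the independence-of-agents property $p(A5)$, because one might worry that the translation $t$ forces simultaneous membership in $W_{t(\Delta_1^{\alpha_1})} \cap \cdots \cap W_{t(\Delta_n^{\alpha_n})}$ even when relations are empty. But the premise of $p(A5)$ requires the existence of successors $u_i$ with $R_\wbox w u_i$, and emptiness of $R_\wbox$ rules this out, so vacuity applies. The proof is therefore a short appeal to soundness together with a one-page model check.
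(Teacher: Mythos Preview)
Your proof is correct and follows the same approach as the paper: exhibit an explicit \laemodel{} and conclude via soundness that $\not\vdash \bot$. The only difference is the witness chosen---the paper builds an infinite linear model on $W=\{w_i \mid i\in\mathbb{N}\}$ with $R_{\nboxs}=R_{\wbox}=\{(w_i,w_{i+1})\}$ and $R_{\bbox}$, $R_{\h}$ defined accordingly, whereas your singleton model with all relations empty is simpler and makes every frame condition hold vacuously (or, for $p(A9;A14)$ and $p(A13)$, trivially).
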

\begin{proof}
To show $\lae$ consistent, we show that the class of models for $\lae$ is non-empty. We define a \laemodel \ $\mathfrak{M}$ as follows: the set of moments $W := \{w_{i} \ | \ i \in \mathbb{N} \}$, for each $\act{i} \in \witset$, $W_{\act{i}} := \emptyset$, for each $\alpha_{i} \in \agents$, $W_{\expec{i}} := \emptyset$, $R_{\nboxs} := R_{\wbox} := \{(w_{i},w_{i+1}) \ | \ i \in \mathbb{N}\}$, $R_{\bbox}$ is taken to be the converse of $R_{\wbox}$, $R_{\h}$ is the transitive closure of $R_{\bbox}$, and $V$ is taken to be an arbitrary valuation. It is straightforward to verify that $\mathfrak{M}$ is a \laemodel.
\end{proof}

We now define a sequence of concepts that will assist us in establishing our weak completeness result.

\begin{definition}[$\lae$-Closure]\label{def:lae-closure} Let $\Sigma$ be a finite set of formulae. The \emph{$\lae$-closure} of $\Sigma$ is the smallest set $\cl{\Sigma}$ satisfying the conditions below:

\begin{itemize}

\item[--] if $\phi\in \Sigma$ or $\phi$ is a subformula of some $\psi\in \Sigma$, then $\phi\in \cl{\Sigma}$

\item[--] if $\p \phi \in \Sigma$, then $\bdia \p \phi, \bdia \phi \in \cl{\Sigma}$

\item[--] each constant $\mathfrak{d}^{\alpha_j}_i$ and $\mathfrak{e}^{\alpha_j}$ is in $\cl{\Sigma}$

\item[--] if $\wdia t(\Delta_1^{\alpha_1}),\ldots, \wdia t(\Delta_n^{\alpha_n})\in \Sigma$, then $\wdia(t(\Delta_1^{\alpha_1})\wedge\cdots\wedge t(\Delta_n^{\alpha_n}))\in \cl{\Sigma}$

\item[--] if $\wdia \mathfrak{e}^{\alpha_j}\in \Sigma$, then $\wdia\neg \mathfrak{e}^{\alpha_j}\in \cl{\Sigma}$

\item[--] $\h \bot, \p \h \bot \in \cl{\Sigma}$

\end{itemize}

\end{definition}

\begin{definition}[Negation $\sim$]
$$
{\sim} \phi := \begin{cases} 
      \psi & \text{if } \phi \text{ is of the form } \neg \psi, \\
      \neg \phi & \text{otherwise.}
   \end{cases}
$$
For a given finite set of formulae $\Sigma$, we define $\neg \cl{\Sigma}$ to be the smallest extension of $\cl{\Sigma}$ closed under $\sim$ (i.e., under single negations).
\end{definition}

\begin{definition}[Atomic Set]\label{def:atomic-set} Let $\Sigma$ be a finite set of formulae. We say that a set $X$ of formulae is \emph{$\lae$-consistent} \ifandonlyif $X \not\vdash \bot$, and say that a set $X$ of formulae is \emph{maximally $\lae$-consistent} \ifandonlyif $X$ is consistent and for any formula $\phi \not\in X$, $X \cup \{\phi\} \vdash \bot$. A set of formulae $X$ is an \emph{atomic set} over $\Sigma$ \ifandonlyif it is a maximal $\lae$-consistent subset of $\neg \cl{\Sigma}$. We use $\at{\Sigma}$ to denote the set of all atomic sets over $\Sigma$.

\end{definition}

\begin{lemma}\label{lm:atomic-set-properties} Let $\Sigma$ be a finite set of formulae and $X \in \at{\Sigma}$. Then,

\begin{itemize}

\item[(i)] For all $\phi \in \neg \cl{\Sigma}$, either $\phi \in X$ or ${\sim} \phi \in X$, but not both.

\item[(ii)] For all $\phi \in \neg \cl{\Sigma}$, if $X \vdash \phi$, then $\phi \in X$.

\item[(iii)] For all $\phi \lor \psi \in \neg \cl{\Sigma}$, $\phi \lor \psi \in X$ \ifandonlyif either $\phi \in X$ or $\psi \in X$.

\item[(iv)] For all $\p \phi \in \neg \cl{\Sigma}$, $\p \phi \in X$ \ifandonlyif either $\bdia \phi\in X$ or $\bdia \p \phi \in X$.

\end{itemize}
\end{lemma}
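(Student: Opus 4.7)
My plan is to prove each of the four items by adapting standard maximally consistent set arguments to the bounded context of $\neg\cl{\Sigma}$. The key observation that makes everything go through is that $\neg\cl{\Sigma}$ is closed under single negations by construction, which gives enough room for a Lindenbaum-style argument even though $X$ is only maximal relative to $\neg\cl{\Sigma}$ rather than the whole language.

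For item (i), both $\phi$ and ${\sim}\phi$ belong to $\neg\cl{\Sigma}$ whenever $\phi$ does, so I can appeal to the maximality of $X$ inside $\neg\cl{\Sigma}$. If both were absent, maximality would give $X \cup \{\phi\} \vdash \bot$ and $X \cup \{{\sim}\phi\} \vdash \bot$, whence $X \vdash \neg\phi$ and $X \vdash \neg{\sim}\phi$; since $\vdash \neg{\sim}\phi \leftrightarrow \phi$ by propositional logic, this contradicts the consistency of $X$. If both were present, a direct propositional argument gives $X \vdash \bot$. Item (ii) is a familiar consequence of maximality: if $\phi \in \neg\cl{\Sigma}$ and $X \vdash \phi$ but $\phi \notin X$, maximality yields $X \cup \{\phi\} \vdash \bot$, and combining this with $X \vdash \phi$ gives $X \vdash \bot$.

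For item (iii), the right-to-left direction follows from (ii) and the propositional derivability $\vdash \phi \rightarrow (\phi \lor \psi)$, noting that $\phi \lor \psi \in \neg\cl{\Sigma}$ by assumption. For the left-to-right direction, observe that $\phi \lor \psi \in \neg\cl{\Sigma}$ implies $\phi,\psi \in \cl{\Sigma} \subseteq \neg\cl{\Sigma}$ because they are subformulae; so (i) applies, and the absence of both $\phi$ and $\psi$ from $X$ forces ${\sim}\phi,{\sim}\psi \in X$, whence $X \vdash \neg(\phi \lor \psi)$, contradicting (ii) applied to the assumption $\phi \lor \psi \in X$.

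Item (iv) is the main hurdle and is where the tailored closure conditions earn their keep. The starting point is the theorem $\vdash \p\phi \leftrightarrow (\bdia\phi \lor \bdia\p\phi)$, which is the dual of axiom A9 and is obtained by pushing negations through A9 using propositional logic and the normality of $\bbox$. To make use of this equivalence via (i)--(iii), I need both $\bdia\phi$ and $\bdia\p\phi$ to lie in $\neg\cl{\Sigma}$, which is precisely what the closure clause for $\p\phi$ in Definition~\ref{def:lae-closure} is designed to secure (applied to $\p\phi \in \cl{\Sigma}$ in the iterated reading). Granted this, the left-to-right direction proceeds by assuming $\p\phi \in X$ but neither $\bdia\phi$ nor $\bdia\p\phi$ in $X$; by (i), ${\sim}\bdia\phi,{\sim}\bdia\p\phi \in X$, and the equivalence together with (ii) yields $\neg\p\phi \in X$, contradicting consistency. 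The right-to-left direction is easier: from $\bdia\phi \in X$ (resp.\ $\bdia\p\phi \in X$), the equivalence gives $X \vdash \p\phi$, and since $\p\phi \in \neg\cl{\Sigma}$, (ii) places $\p\phi$ in $X$. The delicate point to watch is ensuring that the closure really does supply $\bdia\phi$ and $\bdia\p\phi$ at every stage where (iv) is invoked, so that (i) is legitimately applicable to them; I anticipate this to be the main technical subtlety of the lemma.
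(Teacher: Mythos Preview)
Your proposal is correct and follows essentially the same approach as the paper. The paper dismisses (i)--(iii) as ``relatively straightforward'' and proves only (iv), using exactly the ingredients you identify: the dual form of axiom A9, the closure clause ensuring $\bdia\phi,\bdia\p\phi\in\neg\cl{\Sigma}$, claim (i) for the forward direction, and claim (ii) for the reverse; your more detailed treatment of (i)--(iii) and your explicit flag on the closure subtlety are welcome additions but do not depart from the paper's line.
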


\begin{proof} Claims (i)--(iii) are relatively straightforward, so we present the proof of claim (iv) and assume that $\p \phi \in \neg \cl{\Sigma}$.

For the forwards direction, assume that $\p \phi \in X$. By the condition on the $\lae$ closure of a set, $\bdia \p \phi, \bdia \phi \in \neg \cl{\Sigma}$. Since $\h \phi \leftrightarrow (\bbox \phi \wedge \bbox \h \phi)$ is an instance of axiom A9, it follows that $\vdash \p \phi \leftrightarrow (\bdia \p \phi \vee \bdia \phi) $. Then, since $\p \phi \in X$, one can infer that $X\vdash \bdia  \phi \vee \bdia \p \phi$. Suppose that neither $\bdia \phi$ nor $ \bdia \p \phi$ are in $X$, then, due to the definition of $\neg Cl(\Sigma)$ and claim (i), $\neg \bdia \phi,\neg \bdia \p \phi\in X$ and one can infer $X\vdash \neg (\bdia \phi \vee  \bdia \p \phi) $, whence $X\vdash \bot$, which contradicts the fact that $X$ is an atomic set over $\Sigma$.

For the other direction, assume that either $\bdia \p \phi\in X$ or $\bdia \phi \in X$. It follows from axiom A9 that $\vdash (\bdia \p \phi \vee \bdia \phi) \rightarrow \p \phi $, which further implies that $X \vdash \p \phi$, regardless of which case holds. By claim (ii), the assumption that $\p \phi \in \neg \cl{\Sigma}$, and the assumption that $X \in \at{\Sigma}$, we have that $\p \phi \in X$.
\end{proof}

\begin{lemma}
If $\phi \in \neg \cl{\Sigma}$ and $\phi$ is consistent, then there is an $X \in At(\Sigma)$ such that $\phi \in X$.
\end{lemma}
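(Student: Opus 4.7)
The plan is to use a standard Lindenbaum-style extension, exploiting the key fact that $\neg \cl{\Sigma}$ is \emph{finite} whenever $\Sigma$ is. Inspecting Definition~\ref{def:lae-closure}, each clause adds only finitely many formulas (subformulas of a finite set, finitely many constants, bounded expansions for $\p$-formulas, one iterated-conjunction witness per finite tuple of $\wdia t(\Delta^{\alpha})$ formulas in $\Sigma$, and the two fixed formulas $\h \bot, \p \h \bot$); closing under single negations $\sim$ at most doubles the cardinality. Hence $\neg \cl{\Sigma}$ can be enumerated as $\{\psi_{1}, \ldots, \psi_{n}\}$.

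Starting from $X_{0} := \{\phi\}$ (which lies in $\neg \cl{\Sigma}$ by hypothesis, and is $\lae$-consistent since $\phi$ is), I would extend step by step: having built $X_{i}$, set
\[
X_{i+1} :=
\begin{cases}
X_{i} \cup \{\psi_{i+1}\} & \text{if this is $\lae$-consistent,} \\
X_{i} \cup \{{\sim}\psi_{i+1}\} & \text{otherwise.}
\end{cases}
\]
The routine propositional lemma to invoke is that if both $X_{i} \cup \{\psi_{i+1}\}$ and $X_{i} \cup \{{\sim}\psi_{i+1}\}$ were inconsistent, one could derive $\bot$ from $X_{i}$ alone (by classical case analysis on $\psi_{i+1} \vee {\sim}\psi_{i+1}$), contradicting the inductive hypothesis that $X_{i}$ is consistent. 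Note that in the second branch one does have ${\sim}\psi_{i+1} \in \neg \cl{\Sigma}$, because $\neg \cl{\Sigma}$ is closed under $\sim$ by definition, so $X_{i+1} \subseteq \neg \cl{\Sigma}$ is preserved throughout.

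Take $X := X_{n}$. By construction $X \subseteq \neg \cl{\Sigma}$, $X$ is $\lae$-consistent, and $\phi \in X_{0} \subseteq X$. For maximality in the sense of Definition~\ref{def:atomic-set}, pick any $\chi \in \neg \cl{\Sigma} \setminus X$; then $\chi = \psi_{i+1}$ for some $i$, and since $\chi \notin X_{i+1}$, the first branch must have failed at stage $i+1$, so $X_{i} \cup \{\chi\}$ was inconsistent, whence $X \cup \{\chi\}$ is inconsistent. Thus $X \in \at{\Sigma}$ and $\phi \in X$.

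No step is genuinely hard here; the only point that requires a moment's care is checking that $\neg \cl{\Sigma}$ is finite (so that the inductive construction terminates). This hinges on reading the closure clauses of Definition~\ref{def:lae-closure} as generating only a bounded amount of new material — in particular the clause for $\p \phi$ does not trigger an infinite regress, since the formulas it introduces ($\bdia \p \phi$ and $\bdia \phi$) and their subformulas are already absorbed by the subformula and $\p$-clauses respectively.
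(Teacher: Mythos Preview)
Your proposal is correct and is precisely the standard Lindenbaum-style construction that the paper defers to by citing \cite[Lemma~4.83]{BlaRijVen01}; the key ingredient---finiteness of $\neg\cl{\Sigma}$---is exactly what makes the finite stepwise extension terminate in an atom. One small remark: the closure clauses in Definition~\ref{def:lae-closure} are triggered by membership in $\Sigma$ rather than in $\cl{\Sigma}$, so your worry about a potential regress in the $\p$-clause does not even arise, and finiteness is immediate.
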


\begin{proof}
Similar to \cite[\lem~4.83]{BlaRijVen01}.
\end{proof}

We note that given a finite set $X$ of formulae, we define $\widehat{X}$ to be a conjunction of all of its elements. Since all such conjunctions are equivalent according to our axiomatisation and semantics, we are free to use any conjunction of the elements of $X$ for $\widehat{X}$.

\begin{definition}[Canonical Model over $\Sigma$]\label{def:CM}
 Let $\Sigma$ be a finite set of formulae. The \emph{canonical model over $\Sigma$} is define to be the tuple
$$
\mathfrak{M}(\Sigma) := \langle W, \{W_{\act{i}} \ | \ \act{i} \in \witset\}, \{W_{\expec{i}} \ | \ \alpha_{i} \in \agents\}, R_\wbox, R_{\nboxs}, R_{\bbox}, R_{\h} \rangle
$$
such that:

\begin{multicols}{2}
\begin{itemize}

\item[--] $W := At(\Sigma)$

\item[--] $Y \in W_{\act{i}}$ \ifandonlyif $\act{i}\in Y$

\item[--] $Y \in W_{\expec{i}}$ \ifandonlyif $\expec{i}\in Y$

\item[--] $YR_{\wbox} Z$ \ifandonlyif $\widehat{Y} \land \Diamond \widehat{Z}$ is consistent

\end{itemize}

\begin{itemize} 

\item[--] $YR_{\nboxs} Z$ \ifandonlyif $\widehat{Y} \land \ndia \widehat{Z}$ is consistent

\item[--] $YR_{\bbox}Z$ \ifandonlyif $\widehat{Y} \land \bdia \widehat{Z}$ is consistent

\item[--] 
 $YR_{\h}Z$ \ifandonlyif $YR_{\bbox}^{+}Z$

\item[--] $V(p) := \{Y \in At(\Sigma) \ | \ p \in Y \}$

\end{itemize} 
\end{multicols}

We take $R^+_{\bbox}$ to be the transitive closure of $R_{\bbox}$, and the usual definition of sets of moments associated with complex action types is defined as usual:
\begin{itemize}
\item[--]  $Y\in W_{t(\overline{\Delta}^{\alpha_i})}$ \ifandonlyif $Y\notin W_{t(\Delta^{\alpha_i})}$ 
\item[--] $Y \in \w{t(\Delta^{\alpha_{i}} \cup \Gamma^{\alpha_{j}})}$ \ifandonlyif $Y \in \w{t(\Delta^{\alpha_{i}})} \cup  \w{t(\Gamma^{\alpha_{j}})}$.
\end{itemize}
\end{definition}

\begin{lemma}\label{lm:existence-lemma-1} Let $\Sigma$  be a finite set of formulae and $X \in \at{\Sigma}$.
\begin{itemize}

\item[(i)] If $\qbox \in \{\wbox, \bbox, \nbox\}$ and $\qdia  \in \{\Diamond, \bdia, \ndia\}$, then for all $\ques \phi \in \neg \cl{\Sigma}$, $\ques \phi \in X$ \ifandonlyif there exists a $Y \in At(\Sigma)$ such that $X R_{\qbox} Y$ and $\phi \in Y$.

\item[(ii)] If $\p \phi \in \neg \cl{\Sigma}$, then $\p \phi \in X$ \ifandonlyif there exists a $Y$ such that $X R_{\hboxx} Y$ and $\phi \in Y$.

\end{itemize}
\end{lemma}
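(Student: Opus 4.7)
My plan is to establish both parts of the lemma via techniques standard for normal modal logics, adapted to handle the transitive-closure semantics of $\h$ in the canonical model. Part (i) treats the three local modalities $\wbox,\bbox,\nbox$ uniformly via a Lindenbaum-style construction restricted to $\neg\cl{\Sigma}$, while part (ii) needs a more delicate argument invoking the fixed-point axioms A9 and A14.

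For part (i), I would first prove the easy direction: given $Y$ with $XR_{\qbox}Y$ and $\phi\in Y$, the definition of $R_{\qbox}$ in Definition~\ref{def:CM} yields consistency of $\widehat{X}\wedge\qdia\widehat{Y}$; since $\widehat{Y}\vdash\phi$ and $\qbox$ is normal, $\widehat{X}\wedge\qdia\phi$ is consistent, so by Lemma~\ref{lm:atomic-set-properties}(i) we must have $\qdia\phi\in X$. For the hard direction, I would enumerate $\neg\cl{\Sigma}=\{\psi_1,\ldots,\psi_m\}$ and construct $Y$ incrementally: set $Y_0=\{\phi\}$, and at stage $i$ add whichever of $\psi_i$ or ${\sim}\psi_i$ preserves the consistency of $\widehat{X}\wedge\qdia\widehat{Y_i}$. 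The key combinatorial step is that if both candidate extensions were inconsistent, then by normality of $\qbox$ and propositional reasoning $\widehat{X}\wedge\qdia\widehat{Y_{i-1}}$ would already be inconsistent, violating the invariant. The terminal $Y:=Y_m$ is then atomic, contains $\phi$, and witnesses $XR_{\qbox}Y$.

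For part (ii), the easy direction goes by induction on the length of the $R_{\bbox}$-path from $X$ to $Y$. Base case ($XR_{\bbox}Y$, $\phi\in Y$): part (i) gives $\bdia\phi\in X$, and A9 yields $\vdash\bdia\phi\rightarrow\p\phi$, so $\p\phi\in X$ by Lemma~\ref{lm:atomic-set-properties}(ii). Inductive step: if $XR_{\bbox}Z$ and $ZR_{\bbox}^{+}Y$, the induction hypothesis gives $\p\phi\in Z$, part (i) gives $\bdia\p\phi\in X$, and A9 again yields $\p\phi\in X$. The hard direction is the main obstacle. I would proceed by contradiction: assume $\p\phi\in X$ but no $Y$ with $XR_{\bbox}^{+}Y$ satisfies $\phi\in Y$. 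Let $V:=\{Z\in\at{\Sigma}\mid \exists U.\ ZR_{\bbox}^{+}U\text{ and }\phi\in U\}$, and define
\[
\Psi := \bigvee_{Z\in\at{\Sigma}\setminus V}\widehat{Z}, \qquad \Psi' := \bigvee_{Z\in\at{\Sigma}\setminus V,\ \phi\notin Z}\widehat{Z}.
\]
The assumption gives $X\in\at{\Sigma}\setminus V$, so $\vdash\widehat{X}\rightarrow\Psi$. A reachability analysis shows that every $R_{\bbox}$-successor of any $Z\in\at{\Sigma}\setminus V$ lies in $\at{\Sigma}\setminus V$ and omits $\phi$, hence is a disjunct of $\Psi'$. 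Combined with the standard canonical-model fact that $\vdash\widehat{Z}\rightarrow\bbox\bigvee_{ZR_{\bbox}W}\widehat{W}$ for each atom $Z$, this yields $\vdash\Psi\rightarrow\bbox\Psi'$ and $\vdash\Psi'\rightarrow\bbox\Psi'$. Applying R2 and A14 to $\Psi'$ then gives $\vdash\bbox\Psi'\rightarrow\h\Psi'$, and since $\vdash\Psi'\rightarrow\neg\phi$ by construction, R2 and A7 yield $\vdash\h\Psi'\rightarrow\h\neg\phi$. Chaining these derivations with $\vdash\widehat{X}\rightarrow\Psi\rightarrow\bbox\Psi'$ gives $\vdash\widehat{X}\rightarrow\h\neg\phi$; but $\h\neg\phi$ is equivalent to $\neg\p\phi$, contradicting $\p\phi\in X$ together with the consistency of $X$.

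The principal obstacle is the forwards direction of (ii): because the canonical-model relation $R_{\bbox}$ may contain cycles at the syntactic level, a naive iterative strategy that follows $\bdia\p\phi$-witnesses at each step need not terminate. Replacing that strategy with the fixed-point argument sketched above is therefore essential, and it relies on the auxiliary canonical-model lemma $\vdash\widehat{Z}\rightarrow\bbox\bigvee_{ZR_{\bbox}W}\widehat{W}$, itself a nontrivial adaptation of the standard Henkin-style argument to our $\neg\cl{\Sigma}$-relativised setting (using A8 together with the atomicity of elements of $\at{\Sigma}$).
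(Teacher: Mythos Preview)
Your proposal is correct and follows essentially the same approach the paper intends: part (i) is the standard existence lemma for normal modalities (Blackburn et al., Lemma~4.86), and part (ii) is the fixed-point argument via the induction axiom A14 used for transitive-closure modalities (Blackburn et al., Lemma~4.89). In fact, you have spelled out in full the details that the paper leaves implicit by citation, and the auxiliary facts you invoke (e.g., $\vdash\widehat{Z}\rightarrow\bbox\bigvee_{ZR_{\bbox}W}\widehat{W}$ and $\vdash\bigvee_{W\in\at{\Sigma}}\widehat{W}$) are indeed the standard finite-canonical-model ingredients needed here.
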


\begin{proof} The two claims are proven similar to \lem~4.86 and \lem~4.89 in Blackburn et al.~\cite{BlaRijVen01}, respectively.
\end{proof}

\begin{lemma}\label{lm:canonical-model-is-lae-model}
Let $\Sigma$ be a finite set of formulae. Then, the canonical model $\mathfrak{M}$ over $\Sigma$ is a \laemodel. 
\end{lemma}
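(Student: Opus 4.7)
The plan is to verify that the canonical model $\mathfrak{M}(\Sigma)$ from Definition~\ref{def:CM} is first a $\langlae$-model (i.e., that the valuations of the constants $\act{i}$ and $\expec{i}$ line up with the sets $W_{\act{i}}$, $W_{\expec{i}}$), and then to check the list of frame properties p(A3), p(A4), p(A5), p(A6), p(A10;A11), p(A12), p(A9;A14), p(A13) one by one. The first point is immediate from the way $W_{\act{i}}$ and $W_{\expec{i}}$ are defined in Definition~\ref{def:CM}; from here on, each property will be derived from the corresponding axiom together with the existence lemma (Lemma~\ref{lm:existence-lemma-1}), in exactly the way frame conditions are standardly read off from their characterising modal axioms.

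Most of the properties are now routine. Property p(A3) is obtained from axiom A3 by a standard argument: if $X R_{\nboxs} Y$ and $X R_{\nboxs} Z$ with $Y \neq Z$, then $\widehat{Y} \land \widehat{Z}$ is inconsistent, whence $\widehat{Y} \to \neg \widehat{Z}$ is provable; combining this with axiom A3 (interpreted as the functionality principle $\ndia \psi \to \nbox \psi$) and the existence lemma produces a contradiction with the consistency of $\widehat{X} \land \ndia \widehat{Y}$ and $\widehat{X} \land \ndia \widehat{Z}$. Property p(A4) follows from axiom A4 in the same manner. For p(A10;A11), one shows both directions from axioms A10 and A11 respectively: if $X R_{\wbox} Y$ then $\widehat{X} \land \lozenge \widehat{Y}$ is consistent, and using A10 one derives $\widehat{Y} \land \bdia \widehat{X}$ consistent, i.e., $Y R_{\bbox} X$; the converse uses A11 symmetrically. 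Property p(A12) is analogous to p(A3) but using axiom A12, and property p(A5) (independence of agents) follows from axiom A5 together with the closure condition on the Fischer--Ladner closure that introduces the conjunctive formula $\wdia(t(\Delta_1^{\alpha_1}) \land \cdots \land t(\Delta_n^{\alpha_n}))$. Property p(A6) is proved from axiom A6 together with the closure condition that puts $\wdia \neg \expec{i}$ into $\cl{\Sigma}$ whenever $\wdia \expec{i} \in \Sigma$. Property p(A9;A14) is immediate because $R_{\h}$ is \emph{defined} as the transitive closure $R_{\bbox}^{+}$ in the canonical model; here axioms A9 and A14 play their real role inside the existence lemma for $\p$ (Lemma~\ref{lm:existence-lemma-1}(ii)), not at this stage.

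The main obstacle is verifying p(A13), namely that from every state either the $R_{\h}$-successors are empty or there is an $R_{\h}$-successor that itself is terminal. The key idea is to exploit the closure condition guaranteeing $\h \bot, \p \h \bot \in \cl{\Sigma}$ together with axiom A13. For any $X \in \at{\Sigma}$, since $\h\bot \lor \p \h\bot$ is a theorem and $X$ is maximally $\lae$-consistent in $\neg \cl{\Sigma}$, Lemma~\ref{lm:atomic-set-properties} tells us that at least one of $\h\bot$ and $\p \h\bot$ must be in $X$. In the first case, Lemma~\ref{lm:existence-lemma-1}(ii) gives that there is no $Y$ with $X R_{\h} Y$, so clause (i) of p(A13) holds. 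In the second case, the same existence lemma gives a $Y$ with $X R_{\h} Y$ and $\h\bot \in Y$, and then applying Lemma~\ref{lm:existence-lemma-1}(ii) once more shows that $Y$ has no $R_{\h}$-successors, delivering clause (ii).

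After these verifications, $\mathfrak{M}(\Sigma)$ satisfies every item of Definition~\ref{def:lae_frames}, hence it is a $\lae$-model. One subtlety worth highlighting during the write-up is that p(A9;A14) being built into the definition of the canonical model is what allows us to avoid dealing with the non-compactness of $\lae$ at this stage: the transitive closure is simply imposed, and the axioms A9 and A14 instead do their work by guaranteeing that the $\hop$-existence lemma behaves as expected on elements of $\cl{\Sigma}$.
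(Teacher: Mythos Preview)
Your proposal is correct and follows the same approach as the paper, which only details the p(A3) and p(A6) cases explicitly and declares the remaining ones ``similar or routine''; you in fact go further by sketching all frame conditions, including a reasonably detailed treatment of p(A13). One minor imprecision worth tightening: in the first case of p(A13) you invoke Lemma~\ref{lm:existence-lemma-1}(ii) to conclude that $X$ has no $R_{\h}$-successor, but that lemma governs $\p$-formulae rather than $\h$-formulae; the cleaner route is to observe that $\h\bot \to \bbox\bot$ by axiom A9, so $\widehat{X}\land\bdia\widehat{Z}$ is inconsistent for every $Z$, whence $X$ has no $R_{\bbox}$-successor and therefore no $R_{\h}=R_{\bbox}^{+}$-successor.
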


\begin{proof} We know that $\mathfrak{M}(\Sigma)$ is an \quasimodel \ by definition. 
To prove the claim, it suffices to 
argue that $\mathfrak{M}(\Sigma)$ satisfies the properties of a \laemodel. We only show the p(A3) and p(A6) cases as the remaining cases are similar or routine.

\begin{itemize}

\item[p(A3)] Assume that $XR_{\nboxs}Y$ and $XR_{\nboxs}Z$ hold. We want to show that $Y = Z$, that is, we want to show that $\widehat{Y} \land \widehat{Z}$ is consistent (note that since $Y$ and $Z$ are atoms, $\widehat{Y}$ and $\widehat{Z}$ are consistent \ifandonlyif $Y = Z$). We therefore assume that $\widehat{Y} \land \widehat{Z}$ is inconsistent and derive a contradiction. If $\widehat{Y} \land \widehat{Z}$ is inconsistent, then it follows that $\vdash \widehat{Y} \land \widehat{Z} \rightarrow \bot$. By modal reasoning, this implies that $\vdash \ndia (\widehat{Y} \land \widehat{Z}) \rightarrow \ndia \bot$. Observe that $\vdash \ndia \widehat{Y} \land \ndia \widehat{Z} \rightarrow \ndia (\widehat{Y} \land \widehat{Z})$ holds, as it is a consequence of the axiom $\ndia \phi \rightarrow \nbox  \phi$; hence, $\vdash \ndia \widehat{Y} \land \ndia \widehat{Z} \rightarrow \ndia \bot$. By modal and propositional reasoning, we have that $\vdash (\widehat{X} \land \ndia \widehat{Y}) \land (\widehat{X} \land \ndia \widehat{Z}) \rightarrow \bot$, meaning that $\vdash (\widehat{X} \land \ndia \widehat{Y}) \rightarrow \bot \lor (\widehat{X} \land \ndia \widehat{Z}) \rightarrow \bot$. The last theorem implies that either $XR_{\nboxs}Y$ or $XR_{\nboxs}Z$ does not hold (by the definition or $R_{\nboxs}$), thus contradicting our assumption.

\item[p(A6)] Let $X \in W$, $\alpha_{i}$ an agent, and suppose that there exists a $Y$ such that $XR_{\wbox}Y$ and $Y \in \w{\expec{i}}$. We want to show that there exists a $Z \in W$ such that $XR_{\wbox}Z$ and $Z \not\in \w{\expec{i}}$. By \dfn~\ref{def:lae-closure}, we know that $\Diamond \neg \expec{i} \in \neg \cl{\Sigma}$. We aim to show that $\widehat{X} \land \Diamond \neg \expec{i}$ is consistent, since this will imply that $\Diamond \neg \expec{i} \in X$, due to the fact that $X$ is an atomic set. Thus, we assume that $\vdash \widehat{X} \land \Diamond \neg \expec{i} \rightarrow \bot$ to derive a contradiction. By axiom A6, we have  $\vdash \widehat{X} \land \Diamond \expec{i} \rightarrow \bot$ as a consequence, which implies $\vdash \widehat{X} \land \Diamond \expec{i} \land \Diamond \widehat{Y} \rightarrow \bot$ by propositional reasoning. Modal and propositional reasoning may then be applied to derive the following
$$
\vdash \widehat{X} \land \Diamond (\expec{i} \land \widehat{Y}) \rightarrow \bot
$$ 
We know that $\widehat{Y} \land \expec{i}$ is consistent because $Y \in W_{\expec{i}}$. However, since $Y$ is an atomic set and $\expec{i} \in \neg \cl{\Sigma}$, it follows that $\widehat{Y} \land \expec{i}$ is equivalent to $\widehat{Y}$. Hence,
$$
\vdash \widehat{X} \land \Diamond \widehat{Y} \rightarrow \bot
$$ 
contradicting our assumption that $XR_{\wbox}Y$. Consequently, $\Diamond \neg \expec{i} \in X$, so by \lem~\ref{lm:existence-lemma-1}, there exists a $Z \in At(\Sigma)$ such that $XR_{\wbox}Z$ and $\neg \expec{i} \in Z$. The latter fact implies that $Z \not\in \w{\expec{i}}$ by \dfn~\ref{def:CM}.
\qedhere
\end{itemize}
\end{proof}

\begin{lemma}[Truth Lemma]\label{lm:truth-lemma}
Let $\mathfrak{M}(\Sigma)$ be the canonical model over $\Sigma$. For all atomic sets $Y$ and all $\phi \in \neg \cl{\Sigma}$, $\mathfrak{M}(\Sigma), Y \models \phi$ \ifandonlyif $\phi \in Y$.
\end{lemma}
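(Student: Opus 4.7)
The proof proceeds by induction on the construction of $\phi\in\neg\cl{\Sigma}$. The base cases---propositional variables $p$, action-performance constants $\act{i}$, and expectation constants $\expec{i}$---are immediate from the respective definitions of $V$, $W_{\act{i}}$, and $W_{\expec{i}}$ in $\mathfrak{M}(\Sigma)$. The Boolean cases $\neg\phi$ and $\phi\to\psi$ follow by applying Lemma~\ref{lm:atomic-set-properties}: since $\neg\cl{\Sigma}$ is closed under subformulas and under $\sim$, every relevant smaller formula lies in $\neg\cl{\Sigma}$, so the induction hypothesis applies and membership in $Y$ coincides with derivability via Lemma~\ref{lm:atomic-set-properties}(ii).

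For the modal cases $\wbox\phi$, $\bbox\phi$, and $\nbox\phi$, the argument is uniform and rests on Lemma~\ref{lm:existence-lemma-1}(i). If the box-formula lies in $Y$, then for any successor $Z$ a standard consistency argument (using normality of the operator together with Lemma~\ref{lm:atomic-set-properties}(i) applied to $\phi\in\neg\cl{\Sigma}$) forces $\phi\in Z$, and the induction hypothesis yields $\mathfrak{M}(\Sigma),Z\models\phi$. Conversely, if the box-formula is absent from $Y$, then Lemma~\ref{lm:atomic-set-properties}(i) places its dual diamond-formula in $Y$, whence Lemma~\ref{lm:existence-lemma-1}(i) delivers a successor containing $\sim\phi$ that the induction hypothesis turns into a counter-witness. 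The closure clauses in Definition~\ref{def:lae-closure} have been engineered precisely to keep the relevant dual formulas inside $\neg\cl{\Sigma}$ so that this chain of reasoning remains legal.

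The main obstacle will be the case $\h\phi$, since $R_\h$ is not a primitive relation in the construction but the transitive closure $R_\bbox^+$. For the forward direction, given $\h\phi\in Y$ and any $Z$ with $YR_\h Z$, I would fix a chain $Y=X_0\,R_\bbox\,X_1\,R_\bbox\,\cdots\,R_\bbox\,X_n=Z$ and show by a secondary induction on $i<n$ that $\h\phi\in X_i$: the step case uses axiom A9 to derive $\widehat{X_i}\vdash\bbox\h\phi$, whereupon a consistency argument combined with Lemma~\ref{lm:atomic-set-properties}(i) forces $\h\phi\in X_{i+1}$; applying A9 once more at $X_{n-1}$ delivers $\bbox\phi\in X_{n-1}$ and hence $\phi\in Z$, which the main induction hypothesis lifts to $\mathfrak{M}(\Sigma),Z\models\phi$. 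For the converse, $\h\phi\notin Y$ yields $\neg\h\phi\in Y$ by Lemma~\ref{lm:atomic-set-properties}(i), which via A9 is derivably equivalent to $\p\sim\phi$; Lemma~\ref{lm:existence-lemma-1}(ii) then produces the required witness $Z$ with $YR_\h Z$ and $\sim\phi\in Z$. The delicate point throughout is keeping the auxiliary formulas inside $\neg\cl{\Sigma}$ so that Lemmas~\ref{lm:atomic-set-properties} and~\ref{lm:existence-lemma-1} remain applicable---this is exactly the role of the extra clauses on $\p\h\bot$ and on $\p$-unfolding in Definition~\ref{def:lae-closure}.
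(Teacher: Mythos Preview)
Your proposal is correct and follows the same strategy as the paper, which simply states ``By induction on the complexity of $\phi$'' and omits all details. Your elaboration of the base cases, Boolean cases, the uniform treatment of $\wbox$, $\bbox$, $\nbox$ via Lemma~\ref{lm:existence-lemma-1}(i), and the transitive-closure argument for $\h$ using A9 and Lemma~\ref{lm:existence-lemma-1}(ii) is exactly the standard Fischer--Ladner unpacking the paper is alluding to.
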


\begin{proof} By induction on the complexity of $\phi$.
\end{proof}

\begin{theorem}[Weak Completeness] For any formula $\phi$, if $\models \phi$, then $\vdash \phi$. 
\end{theorem}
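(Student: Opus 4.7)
The plan is to prove weak completeness by contraposition, showing that every non-theorem has a countermodel in the class of $\lae$-models. Given the heavy machinery already assembled in the preceding lemmas (the $\lae$-closure, atomic sets, the canonical model over a finite $\Sigma$, and the Truth Lemma), the proof should be a short assembly rather than a fresh construction.

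First I would assume $\not\vdash \phi$ and pass to the consistent formula $\neg\phi$. To invoke the canonical model construction, which only works over a \emph{finite} parameter set $\Sigma$, I would take $\Sigma := \{\neg\phi\}$ (or $\{\phi\}$, either works). The key observation here is that $\cl{\Sigma}$, although enlarged by the extra closure conditions in \dfn~\ref{def:lae-closure} (the clauses for $\mathsf{P}$, the action witnesses, the independence-of-agents clause, and the $\mathsf{H}\bot$ clauses), remains finite because $\Sigma$ is finite, the sets $\atomacts$ and $\agents$ are finite, and each extra condition adds only finitely many formulae. Hence $\neg \cl{\Sigma}$ is finite, and $\at{\Sigma}$ is a finite collection of finite sets of formulae.

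Next, since $\neg\phi \in \neg \cl{\Sigma}$ and $\neg\phi$ is consistent, the existence lemma (\lem~unnumbered, stating that any consistent formula in $\neg\cl{\Sigma}$ sits inside some atomic set) yields an $X \in \at{\Sigma}$ with $\neg\phi \in X$. I would then form the canonical model $\mathfrak{M}(\Sigma)$ as in \dfn~\ref{def:CM}; by \lem~\ref{lm:canonical-model-is-lae-model}, $\mathfrak{M}(\Sigma)$ is a genuine \laemodel, so it belongs to the class over which validity is defined. Finally, applying the Truth Lemma (\lem~\ref{lm:truth-lemma}) to $X$ and $\neg\phi \in \neg \cl{\Sigma}$ gives $\mathfrak{M}(\Sigma), X \models \neg\phi$, i.e.\ $\mathfrak{M}(\Sigma), X \not\models \phi$. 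Hence $\not\models \phi$, which is the contrapositive of the desired implication.

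The real difficulty has already been handled in the earlier lemmas; here the step I would most carefully double-check is the passage from $\not\vdash \phi$ to the existence of an atomic set containing $\neg\phi$, because it silently relies on the closure conditions being chosen \emph{just right} so that $\cl{\Sigma}$ remains finite while still being rich enough for the Truth Lemma to go through for each modality, including the tricky $\mathsf{H}$/$\bbox$ interaction controlled by axioms A9 and A14 (cf.\ the $\mathsf{P}\phi$ clause of \lem~\ref{lm:atomic-set-properties}). Once that balance is in place, weak completeness follows immediately.
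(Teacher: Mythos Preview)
Your proposal is correct and follows exactly the same approach as the paper, which simply records that the result follows from \lem~\ref{lm:canonical-model-is-lae-model} and \lem~\ref{lm:truth-lemma}. You have merely spelled out the standard contrapositive argument (consistent $\neg\phi$, finite closure, atomic set containing $\neg\phi$, canonical model, Truth Lemma) that the paper leaves implicit.
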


\begin{proof} Follows from \lem~\ref{lm:canonical-model-is-lae-model} and \lem~\ref{lm:truth-lemma}.
\end{proof}

\section{Formal Notions of Instrumentality}\label{Sec:formal_instrumentality}

In this section, we formalize a variety of instrumentality notions corresponding to the philosophical analysis of \sect~\ref{Sec:philosophy_agency}. As will be demonstrated, the logic $\lae$ suffices to capture many of the desired nuances present in judgments of instrumentality. In \sect~\ref{subsect:basic_instruments}, we show how time intervals can be utilised to evaluate an action's suitability for serving a particular purpose (criterion I). In \sect~\ref{good_formal_instruments}, using the various definitions of instrumentality, we semantically define a collection of value judgments qualifying instruments as `better', `best', `worst', `good', and `poor' (criterion II). In \sect~\ref{defeasibility}, we show that, as a result of using past and future references, the obtained formal definitions capture the inherent defeasible nature of judgments of instrumentality (criterion III).

\subsection{Elementary Instrumentality Notions}\label{subsect:basic_instruments}

In \cite{BerPas18}, four formal definitions of instrumentality are provided: a `basic' and a `proper' notion of instrumentality, which are either agent-dependent or agent-independent. (NB. The terms `basic' and `proper' refer to our account of instruments in parts (1) and (2) of \dfn~\ref{def:philosophical_instruments}, respectively, i.e., `plain' and `excellent' instruments.) As remarked above, the logical system $\laei$, introduced in~\cite{BerPas18}, is closely related to the logic $\lae$ presented in this paper. The principal difference between $\laei$ and $\lae$ is that the former's language does not include modalities that refer to the past, whereas the latter does. To be precise, the language of $\lae$ includes the additional $\bbox$ and $\h$ operators. These operators allow us to syntactically define the notion of `\textit{candidate instrument}' and `\textit{excellent candidate instrument}' (see \dfn~\ref{def:candidate_instruments}), referring either to a finite interval of time preceding the moment of evaluation or to the entire past of the moment of evaluation. These notions were merely semantically defined in \cite{BerPas18}. The logical characterisation of the $\bbox$ operator enables us to count successful applications of candidate instruments and syntactically capture intervals of time.

In (\ref{defq:c-instr}) below, we formalise the agent-dependent notion of candidate $\phi$-instrument as presented in item (1) of \dfn~\ref{def:candidate_instruments}. Observe that the definition is relativised to a past time interval with length $n$. 

{\setstretch{\s}
\begin{enumerate}
 [label={},ref=$\mathsf{d\arabic*}$, leftmargin=0.5cm]
   \setcounter{enumi}{10}
   \item \label{defq:c-instr}
\textit{Candidate Instrument for $\alpha$ (with a past interval of length $n$)}\\ 
 $[\Delta^{\alpha}]_{n}^{c-instr}\phi :=  \bigvee\limits_{ 0\leq i\leq n} \bdia^i (t(\Delta^{\alpha}) \land \bdia [\Delta^{\alpha}]^{would}\phi)$
\hfill (\ref{defq:c-instr})
\end{enumerate}
}

The formula $[\Delta^{\alpha}]_{n}^{c-instr}\phi$ (\ref{defq:multi_c-instr}) reads as `somewhere within the past interval of length $n$ there is a moment $v$ at a distance of $i$ units of time that witnessed the successful performance of $\Delta$ by agent $\alpha$ such that at $v$'s immediate predecessor, the performance of $\Delta$ by that agent would have guaranteed $\phi$'.

Agent-dependent excellent candidate $\phi$-instruments are, then, formalized by combining the above definition with the idea that \textit{every} past performance of the relevant action type has led to the intended outcome  (item (2) of \dfn~\ref{def:candidate_instruments}).

{\setstretch{\s}
\begin{enumerate}
 [label={},ref=$\mathsf{d\arabic*}$, leftmargin=0.5cm]
   \setcounter{enumi}{11}
   \item\label{defq:e-c-instr}
\textit{Excellent Candidate Instrument for $\alpha$ (with a past interval of length $n$)}\\ 
 $[\Delta^{\alpha}]_{n}^{ex\matnew{.}c-instr}\phi := [\Delta^{\alpha}]_{n}^{c-instr}\phi  \wedge \bigwedge\limits_{ 1\leq k\leq n} \bbox^k  [\Delta^{\alpha}]^{would}\phi$ 
 \hfill(\ref{defq:e-c-instr})
 \end{enumerate}}

We read (\ref{defq:e-c-instr}) as `action type $\Delta$ has proven to be a candidate instrument for $\phi$ for $\alpha$ at least once in the interval, and every other performance of $\Delta$ by $\alpha$ within the interval would have also guaranteed $\phi$'. One can say that,  within the past interval $n$, the action type $\Delta$ has a one hundred percent success rate for $\alpha$ in obtaining $\phi$. In the sequel, we introduce ways of refining these definitions.\footnote{Observe that in  (\ref{defq:c-instr}) and (\ref{defq:e-c-instr}) reference to past experience---i.e., $\bdia^i$ and $\bbox^i$---also includes reference to outcomes obtained at the moment of evaluation $w$. 
Otherwise, a performance of $\Delta$ immediately before $w$ might fail to deliver $\phi$ at $w$.} 

Agent-independent generalizations of (\ref{defq:c-instr}) and (\ref{defq:e-c-instr}) are captured by (\ref{defq:multi_c-instr}), respectively (\ref{defq:generalized_e-c-instr}) (cf. the agent-dependent definitions of \cite{BerPas18}).

{\setstretch{\s}
\begin{enumerate}
 [label={},ref=$\mathsf{d\arabic*}$, leftmargin=0.5cm]
   \setcounter{enumi}{12}
   \item \label{defq:multi_c-instr}
   $[\Delta]_{n}^{c-instr}\phi := \bigvee\limits_{\alpha\in \agents} \bigvee\limits_{ 0\leq i\leq n} \bdia^i (t(\Delta^{\alpha}) \land \bdia[\Delta^{\alpha}]^{would}\phi)$
\hfill (\ref{defq:multi_c-instr})
\end{enumerate}
}

{\setstretch{\s}
\begin{enumerate}
 [label={},ref=$\mathsf{d\arabic*}$, leftmargin=0.5cm]
   \setcounter{enumi}{13}
 \item \label{defq:generalized_e-c-instr}
 $[\Delta]_{n}^{ex\matnew{.}c-instr}\phi := [\Delta]_{n}^{c-instr}\phi  \wedge \bigwedge\limits_{1\leq k\leq n} \bbox^k \bigwedge\limits_{\alpha\in \agents} [\Delta^{\alpha}]^{would}\phi$ 
 \hfill(\ref{defq:generalized_e-c-instr})
 \end{enumerate}
 }

Henceforth, we focus on \textit{agent-dependent} notions. The agent-independent versions of each of the definitions below can be straightforwardly obtained.

The more refined definitions of instruments and excellent instruments---corresponding to items (1) and (2) of \dfn~\ref{def:philosophical_instruments}---are obtained by adding the pivotal conjectural element, reflecting the agent's expectations about the instrument's suitability in the immediate future. To capture these notions, we first need to alter the agentive operator \textit{would} (\ref{defq:would}) (\sect~\ref{subsect:formal_language}).  

{\setstretch{\s}
\begin{enumerate}
 [label={},ref=$\mathsf{d\arabic*}$, leftmargin=0.5cm]
   \setcounter{enumi}{14}
\item\label{defq:exp_would} \textit{Expected Would}\\
$[\Delta^{\alpha}]^{would}_{ex}\phi:=\square
((t(\Delta^{\alpha})\land \mathfrak{e}^{\alpha})\rightarrow\phi)$.
\hfill(\ref{defq:exp_would})
\end{enumerate}
}

The `expected would' operator (\ref{defq:exp_would}) 
restricts the formula's evaluation to immediate future moments that 
the agent \textit{expects} as 
continuations of the present. Using (\ref{defq:exp_would}), we formalize items (1) and (2) of \dfn~\ref{def:philosophical_instruments} as follows:

{\setstretch{\s}
\begin{enumerate}
 [label={},ref=$\mathsf{d\arabic*}$, leftmargin=0.5cm]
   \setcounter{enumi}{15}
   \item\label{defq:f-instr}
\textit{Instrument for $\alpha$ (with a past interval of length $n$)}\\
 $[\Delta^{\alpha}]_{n}^{instr}\phi :=  \bigvee\limits_{ 0\leq i\leq n} \bdia^i (t(\Delta^{\alpha}) \land \bdia [\Delta^{\alpha}]^{would}\phi) \land [\Delta^{\alpha}]^{would}_{ex}\phi$
\hfill(\ref{defq:f-instr})

\item \label{defq:e-f-instr}
\textit{Excellent Instrument for $\alpha$ (with a past interval of length $n$)}\\
 $[\Delta^{\alpha}]_{n}^{ex-instr}\phi := [\Delta^{\alpha}]_{n}^{instr}\phi  \wedge \bigwedge\limits_{1\leq k\leq n} \bbox^k  [\Delta^{\alpha}]^{would}\phi$ 
 \hfill(\ref{defq:e-f-instr})
\end{enumerate}
}

The final formalisations (\ref{defq:f-instr}) and (\ref{defq:e-f-instr})---to which we sometimes refer as `proper instruments'---differ from their candidate counterparts (\ref{defq:c-instr}) and (\ref{defq:e-c-instr}) through the additional conjunct expressing that the agent expects that, at the 
moment of evaluation, she would 
guarantee $\phi$ by performing $\Delta$. Stronger notions of (excellent) instruments are straightforwardly obtained by using a definition of `expected could' instead of `expected would' as the last conjunct in  (\ref{defq:f-instr}).\footnote{Such stronger notions are used in \cite{BerPas18}. As pointed out by a reviewer, 
the use of expected could can be problematic. For instance, suppose that at $w$ agent $\alpha$ acknowledges a relation between past performances of action $\Delta$ (e.g., turning a car's ignition key) and an outcome $\phi$ (the car's motor is running). Now, suppose that at $w$, $\alpha$ expects that $\Delta$ will serve purpose $\phi$ in the immediate future. This may suffice for $\alpha$ to consider $\Delta$ an instrument for $\phi$ at $w$, even if $\Delta$ cannot be performed at $w$ for reasons unknown to $\alpha$ (e.g., the car's battery is down).}
The following implications are theorems of $\lae$ and show the various relations between the four notions defined so far.

{\setstretch{\s}
\begin{enumerate}
    \item[] 
$[\Delta^{\alpha}]_{n}^{ex-instr}\phi\rightarrow [\Delta^{\alpha}]_{n}^{ex\matnew{.}c-instr}\phi\ $ and 
$\ [\Delta^{\alpha}]_{n}^{instr}\phi\rightarrow [\Delta^{\alpha}]_{n}^{c-instr}\phi$

\item[] 
$[\Delta^{\alpha}]_{n}^{ex\matnew{.}c-instr}\phi\rightarrow [\Delta^{\alpha}]_{n}^{c-instr}\phi\ $ and 
$\ [\Delta^{\alpha}]_{n}^{ex-instr}\phi\rightarrow [\Delta^{\alpha}]_{n}^{instr}\phi$
\end{enumerate}
}

Before moving to comparative notions of instrumentality, we make three remarks. First, purposes may include action formulae or action witnesses. For instance, agent $\alpha$'s purpose may be to ensure that agent $\beta$ could bring about $\phi$ by performing $\Delta$. In that case, $\alpha$ is looking for the action that will ensure that, at the next moment, $[\Delta^{\beta}]\phi$ holds. We will not further pursue this here.

Second, we did not consider deliberative versions of instrumentality. Definitions (\ref{defq:c-instr}), (\ref{defq:e-c-instr}), (\ref{defq:f-instr}), and (\ref{defq:e-f-instr}) will qualify each action type as an instrument for bringing about 
\textit{tautologous} propositions. Deliberative variants can be straightforwardly obtained in the spirit of \cite{BelPerXu01}'s deliberative STIT operator.

Third, the volitional concepts of \sect~\ref{subsect:volitional_concepts} (e.g., `producing' and `destroying') can also be employed in the context of instrumentality. Such definitions can be straightforwardly given in the framework of $\lae$. For instance, (\ref{defq:produce_instr}) (below) expresses that, for agent $\alpha$, $\Delta$ is an instrument for \textit{producing} $\phi$ because (i) $\alpha$ produced $\phi$ through performing $\Delta$ at least once in the past (with an interval of length $n$) and (ii) $\alpha$ \textit{expects} to produce $\phi$ through $\Delta$ at present. 

{\setstretch{\s}
\begin{enumerate}
 [label={},ref=$\mathsf{d\arabic*}$, leftmargin=0.5cm]
   \setcounter{enumi}{17}
   \item\label{defq:produce_instr}
\begin{tabular}{l c l}
$[\Delta^{\alpha}]_{prod-n}^{instr}\phi$ & $:=$ & $(i) \  \bigvee\limits_{ 0\leq i\leq n} \bdia^i (t(\Delta^{\alpha}) \land \bdia [\Delta^{\alpha}]^{would}_{prod}\phi) \ \land$ \\
& &  $(ii) \quad [\Delta^{\alpha}]^{would}_{ex}\phi \land \lnot\phi \land \lozenge (\lnot \phi\land \mathfrak{e}^{\alpha})$\\     
\end{tabular}\hfill (\ref{defq:produce_instr})
\end{enumerate}}
\noindent That (\ref{defq:produce_instr}) is a stronger notion than (\ref{defq:f-instr}) follows from the following theorem: %

\begin{enumerate}
\item[] $[\Delta^{\alpha}]_{prod-n}^{instr}\phi \rightarrow [\Delta^{\alpha}]_{n}^{instr}\phi\ $
\end{enumerate}

\subsection{Good Instruments and Comparative Instrumentality}\label{good_formal_instruments}

As shown in the previous section, our language $\langlae$ is suitable for the definition of candidate instruments, as per (\ref{defq:c-instr}) and (\ref{defq:e-c-instr}), as well as proper instruments, as per (\ref{defq:f-instr}) and (\ref{defq:e-f-instr}). We now discuss how to evaluate different instruments serving the same purpose, giving rise to notions such as `better' and `good' instruments (see \dfn~\ref{def:comparison_notions} and items (3) and (4) of \dfn~\ref{def:philosophical_instruments}). In our framework, we can use $\lae$-models to determine whether an available (candidate) instrument is regarded as better than another. In this section, we proceed with a semantic analysis of comparative instrumentality, providing various notions of \textit{better (worse), best (worst),} and \textit{good (poor)} instruments. 
 
We adopt the following methodology: 
\begin{enumerate}
\item Collect all the instruments that serve $\phi$;
\item Determine the success ratio of the obtained instruments w.r.t. $\phi$.
\end{enumerate}
For comparative judgments of \textit{betterness}, we proceed accordingly:
\begin{enumerate}
\setcounter{enumi}{2}
\item
Order the available instruments on the basis of their success ratio; 
\item
Identify the best and the worst;
\item
Identify better instruments by comparing instruments within their order.
\end{enumerate}
For comparative judgments of instrumental \textit{goodness}, we proceed as follows: 
\begin{enumerate}
\setcounter{enumi}{5}
\item
Identify good instruments by checking whether their success ratio satisfies a certain threshold ratio;
\item
Identify good instruments by checking whether they satisfy certain additional thresholds, such as a minimum amount of past witnesses. 
\end{enumerate}
We addressed step (1) in the previous section via (\ref{defq:f-instr}). In \sect~\ref{subsect:better}, we deal with (3)-(5), and in \sect~\ref{subsec:good}, we deal with (6) and (7). First, let us address the notion of \textit{success ratio} from step (2). For readability, in what follows, we omit explicit reference to a past interval of length $n$ without a loss of generality. Recall that $\lae$-frames are rooted tree-like structures, meaning each moment $w$ has a finite past. Thus, there are only finitely many witnesses of an instrument's application. We use `$\infty$' to denote consideration of the \textit{entire past}. That is, for a moment $w\in W$,  $w\models [\Delta^{\alpha}]^{instr}_{\infty}\phi$ signifies that we evaluate the past of $w$ up to the root of the model.

In the sequel, we provide our definitions for proper instruments only (\ref{defq:f-instr}) (variations can be straightforwardly obtained). Let $\phi$ be the purpose endorsed by agent $\alpha$ at moment $w$. Then, we let $\instr{w}{\alpha}{\phi}$ denote the set containing all $\phi$-instruments available to $\alpha$ at $w$ with respect to the entire past of $w$, i.e., all actions that served $\phi$ at least once, and of which $\alpha$ expects that it will serve $\phi$ again.\footnote{Note that even if $\instr{w}{\alpha}{\phi}$ contains infinitely many action types, up to logical equivalence, the set will only contain finitely many. We come back to this in \sect~\ref{subsect:better}.}  Definition (\ref{defq:phi_instruments}) below makes this formally precise. 

{\setstretch{\s}
\begin{enumerate}
 [label={},ref=$\mathsf{d\arabic*}$, leftmargin=0.5cm]
   \setcounter{enumi}{18}
   \item\label{defq:phi_instruments}
\textit{Available $\phi$-instruments for $\alpha\in\agents$ at $w$}\\
$\instr{w}{\alpha}{\phi} := \{ \Delta\in \actions \ | \ w\models [\Delta^{\alpha}]^{instr}_{\infty}\phi \}$\hfill(\ref{defq:phi_instruments})
\end{enumerate}}

For each collected available instrument $\Delta\in\instr{w}{\alpha}{\phi}$, we must determine how `successful' it is at securing $\phi$. Here, we opt for defining success in terms of an achievement/failure ratio. Below, \textit{achievement} (\ref{defq:achievement}) refers to the fact that by performing $\Delta$ agent $\alpha$ would secure $\phi$ and, in fact, did secure $\phi$.

{\setstretch{\s}
\begin{enumerate}
 [label={},ref=$\mathsf{d\arabic*}$, leftmargin=0.5cm]
   \setcounter{enumi}{19}
   \item\label{defq:achievement}
   \textit{Achievement of $\Delta\in \instr{w}{\alpha}{\phi}$}\\
    $\achiev{w}{\alpha}{\phi}{\Delta} := \{ \theta\ |\ \theta= \bdia^i(t(\Delta^{\alpha})\land\bdia[\Delta^{\alpha}]^{would}\phi), 0 {\leq} i$, and $w\models \theta\}$
\hfill(\ref{defq:achievement})
\end{enumerate}}

In (\ref{defq:achievement}), we use $0\leq i$ to indicate that the 
moment of evaluation may serve as a witness of a successful application of $\Delta$ initiated in its immediate preceding moment. Furthermore, the first conjunct in $\theta$ witnesses the  {actual} 
performance of $\Delta$ and attainment of $\phi$, whereas the second conjunct ensures that the simultaneous occurrence of $\Delta$ and $\phi$ is not merely coincidental. 

We define \textit{failure} (\ref{defq:failure}) in terms of the expectations of the agent involved. 
A failure to secure $\phi$ by $\alpha$'s performance of $\Delta$ at $w$ means that (i) at $w$, $\Delta$ has just been performed by $\alpha$, (ii) $\phi$ does not hold at $w$, and (iii) at the immediately preceding moment  $\alpha$ expected that by performing $\Delta$, $\phi$ would be attained. 

{\setstretch{\s}
\begin{enumerate}
 [label={},ref=$\mathsf{d\arabic*}$, leftmargin=0.5cm]
   \setcounter{enumi}{20}
   \item\label{defq:failure}
   \textit{Failure of $\Delta\in \instr{w}{\alpha}{\phi}$}\\
$\fail{w}{\alpha}{\phi}{\Delta} {:=} \{  \theta\  |\ \! \theta=\bdia^i(t(\Delta^{\alpha})\land\!\lnot\phi \land \bdia[\Delta^{\alpha}]^{would}_{ex}\phi)$, $0{\leq}i$, and $w\models \theta\}$\hfill(\ref{defq:failure})
\end{enumerate}}

Through the use of $\bdia^i$, all members of $\achiev{w}{\alpha}{\phi}{\Delta}$ and $\fail{w}{\alpha}{\phi}{\Delta}$ represent unique past moments. In what follows, we sometimes abuse notation and let these sets  stand for the sets of moments witnessing the respective achievements and failures. Furthermore, the  two sets are finite since the past is finite, and so,  
the cardinality of the two sets in (\ref{defq:achievement}) and (\ref{defq:failure}) can be utilised to count the amount of achieved or failed applications of the instrument under consideration. Thus, we have the means to calculate a \textit{success ratio} for the collected instruments, i.e., (\ref{defq:success_ratio}). We denote the cardinality of a set $S$ by $|S|$. 

{\setstretch{1}
\begin{enumerate}
 [label={},ref=$\mathsf{d\arabic*}$, leftmargin=0.5cm]
   \setcounter{enumi}{21}
   \item\label{defq:success_ratio}
   \textit{Success ratio of $\Delta\in \instr{w}{\alpha}{\phi}$}
    \item[]
$ \success{w}{\alpha}{\phi}{\Delta} :=\displaystyle \frac{|\achiev{w}{\alpha}{\phi}{\Delta}| }{|\achiev{w}{\alpha}{\phi}{\Delta} \cup \fail{w}{\alpha}{\phi}{\Delta}|}$
\hfill(\ref{defq:success_ratio})
\end{enumerate}
}

To exemplify the above machinery, consider \fig~\ref{fig:success_ratio}  and suppose there is an agent $\alpha$ at moment $w_6$ who aims to bring about $\phi$. There are two available instruments at $w_6$, namely, $\Delta$ and $\Gamma$, whose potential was witnessed at $w_2$. We find that $\alpha$ expects both instruments to serve $\phi$ at $w_6$ (see $w_8$), although $\alpha$'s current expectations about $\Gamma$ are incorrect (see $w_9$). Furthermore, notice that, from the vantage point of $w_6$, $\Gamma$ and $\Delta$ both have two achievement witnesses, namely, $\achiev{w_6}{\alpha}{\phi}{\Gamma}=\{ w_2, w_5\}$, and $\achiev{w_6}{\alpha}{\phi}{\Delta}=\{ w_1, w_2\}$. Furthermore, $\Delta$ also has a witness of a failed application at $w_6$, that is, $\fail{w_6}{\alpha}{\phi}{\Gamma}=\{ w_6\}$. At $w_5$, $\alpha$ had expectations that turned out to be wrong at $w_6$. Hence, the success ratios of $\Gamma$ and $\Delta$ at $w_6$ are $\success{w_6}{\alpha}{\phi}{\Gamma}=1$ and $\success{w_6}{\alpha}{\phi}{\Delta}=\frac{2}{3}$, respectively.

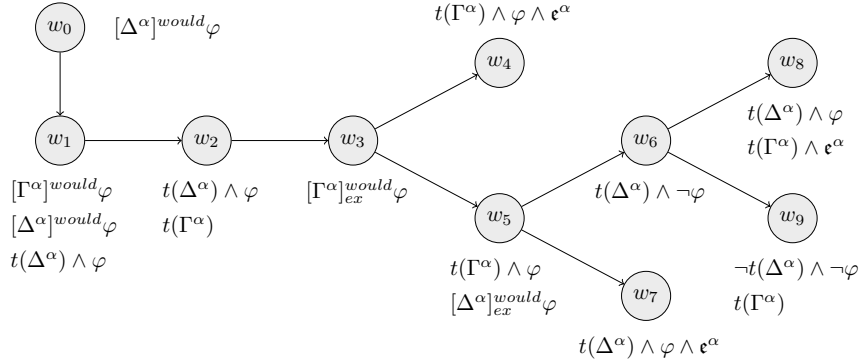
\begin{figure}
    \centering
    \resizebox{\columnwidth}{!}{
    \begin{tikzpicture}
    \node[world] (w0) [label=below:{\def\arraystretch{1.3}
    \begin{tabular}{l}
    $[\Gamma^{\alpha}]^{would}\phi$\\
    $[\Delta^{\alpha}]^{would}\phi$\\
    $t(\Delta^{\alpha})\land \phi$
    \end{tabular}}] {$w_1$};
    \node[world] (wx) [above=of w0, label=right:{\def\arraystretch{1.3}
    \begin{tabular}{l}
    $[\Delta^{\alpha}]^{would}\phi$\\
    \end{tabular}}] {$w_0$};
    \node[world] (w1) [right=of w0, xshift=15pt, label=below:{\def\arraystretch{1.3}
    \begin{tabular}{l}
    $t(\Delta^{\alpha})\land\phi$\\
    $t(\Gamma^{\alpha})$\\
    \end{tabular}}] {$w_2$};
    \node[world] (w2) [right=of w1, xshift=15pt, 
    label=below:{\def\arraystretch{1.3}
    \begin{tabular}{l}
    $[\Gamma^{\alpha}]_{ex}^{would}\phi$\\
    \end{tabular}}] {$w_3$};
    \node[world] (w3) [right=of w2, xshift=15pt, yshift=35pt, label=above:{\def\arraystretch{1.3}
    \begin{tabular}{l}
    $t(\Gamma^{\alpha})\land\phi \land \mathfrak{e}^{\alpha}$\\
    \end{tabular}}] {$w_4$};
    \node[world] (w4) [right=of w2, xshift=15pt, yshift=-35pt, label=below:{\def\arraystretch{1.3}
    \begin{tabular}{l}
    $t(\Gamma^{\alpha})\land \phi$\\
    $[\Delta^{\alpha}]_{ex}^{would}\phi$
    \end{tabular}}] {$w_5$};
    \node[world] (w5) [right=of w4, xshift=15pt, yshift=35pt, label=below:{\def\arraystretch{1.3}
    \begin{tabular}{l}
    $t(\Delta^{\alpha}) \land \lnot \phi$
    \end{tabular}}] {$w_6$};
    \node[world] (w6) [right=of w4, xshift=15pt, yshift=-35pt, label=below:{\def\arraystretch{1.3}
    \begin{tabular}{l}
    $t(\Delta^{\alpha}) \land \phi \land \mathfrak{e}^{\alpha}$
    \end{tabular}}] {$w_7$};
    \node[world] (w7) [right=of w5, xshift=15pt, yshift=35pt, label=below:{\def\arraystretch{1.3}
    \begin{tabular}{l}
    $t(\Delta^{\alpha}) \land \phi$\\
    $t(\Gamma^{\alpha}) \land \mathfrak{e}^{\alpha}$
    \end{tabular}}] {$w_8$};
    \node[world] (w8) [right=of w5, xshift=15pt, yshift=-35pt, label=below:{\def\arraystretch{1.3}
    \begin{tabular}{l}
    $\lnot t(\Delta^{\alpha}) \land \lnot \phi$\\
    $t(\Gamma^{\alpha})$
    \end{tabular}}] {$w_9$};
    \path[->,draw] (w0) -- (w1);
    \path[->,draw] (w1) -- (w2);
    \path[->,draw] (w2) -- (w3);
    \path[->,draw] (w2) -- (w4);
    \path[->,draw] (w4) -- (w5);
    \path[->,draw] (w4) -- (w6);
    \path[->,draw] (w5) -- (w7);
    \path[->,draw] (w5) -- (w8);
    \path[->,draw] (wx) -- (w0);
    \end{tikzpicture}
    }
    \caption{An example of evaluating $\phi$-instruments $\Gamma$ and $\Delta$ for $\alpha$ at $w_6$. 
    }
    \label{fig:success_ratio}
\end{figure}

\subsubsection{Comparative Judgments of Betterness}\label{subsect:better}

The notion of success ratio is pivotal for defining various types of axiological---i.e., value---judgment concerning instrumentality. In particular, we are interested in the following axiological concepts: \textit{best}, \textit{worst}, \textit{better}, \textit{good}, and  \textit{poor}. The latter two are formally addressed in the next section. 

Recall that the language $\mathcal{L}_{\lae}$ allows only for 
finitely many action types, up to provable equivalence of the formulas that witness their performance by an agent. That is, since $\actions$ is constructed over a finite number of atomic action types in $\atomacts$, for each agent in $\agents$, there will be finitely many equivalence classes $[\![ \Delta^{\alpha} ]\!]~:=~\{\Gamma^{\alpha} \ | \ \vdash_{\lae} t(\Delta^{\alpha}) \leftrightarrow t(\Gamma^{\alpha}) \}$ of equivalent actions. We let $ \mathsf{EqAct} =\{ [\![ \Delta^{\alpha} ]\!]\ |\ $ for $\Delta\in\actions$ and $\alpha\in\agents \}$ be the set of all such equivalence classes.
Consequently, we obtain a finite ordering of actions (up to equivalence) when ordering candidate instruments. This observation enables us to identify those instruments at the ordering's upper- and lower-bound.
\\
\\
\textbf{Naive best, worst, and better.} We define $\better{w}{\phi}$ to be a success ratio ordering over actions such that, for each $\Delta,\Gamma\in\instr{w}{\alpha}{\phi}$, we have

{\setstretch{1}
\begin{enumerate}
 [label={},ref=$\mathsf{d\arabic*}$, leftmargin=0.5cm]
   \setcounter{enumi}{22}
   \item\label{defq:better}
$ \Delta^{\alpha} \better{w}{\phi} \Gamma^{\alpha} \text{ \ifandonlyif } \success{w}{\alpha}{\phi}{\Delta} \geq \success{w}{\alpha}{\phi}{\Gamma}.$\hfill(\ref{defq:better})
\end{enumerate}}

We define $\Delta^{\alpha} \sbetter{w}{\phi} \Gamma^{\alpha}$ as the conjunction $\Delta^{\alpha} \better{w}{\phi} \Gamma^{\alpha}$ and $\Gamma^{\alpha} \not\better{w}{\phi} \Delta^{\alpha}$. We interpret $\better{w}{\phi}$ as a ``betterness'' relation: i.e., we read $\Delta^{\alpha} \better{w}{\phi} \Gamma^{\alpha}$ as `at $w$, $\Delta$ is a \textit{weakly better} instrument for agent $\alpha$ to secure $\phi$ than the instrument $\Gamma$' (i.e., $\Delta$ is at least as good as $\Gamma$ for $\alpha$ in order to get $\phi$). Then, $\Delta^{\alpha} \sbetter{w}{\phi} \Gamma^{\alpha}$ expresses that $\Delta$ is a (strictly) \textit{better} $\phi$-instrument than $\Gamma$, for $\alpha$ at $w$.

Likewise, we can define notions of best and worst because, in the ordering, we find an upper and lower bound. Since we have a finite ordering of classes of equivalent actions, we know that if $\instr{w}{\alpha}{\phi}\neq\emptyset$, there are $\Delta,\Gamma\in \instr{w}{\alpha}{\phi}$ such that $\Delta^{\alpha}$ is an upper bound, and $\Gamma^{\alpha}$ is a lower bound of $\better{w}{\phi}$. In other words, there are no $\Theta,\Sigma\in\instr{w}{\alpha}{\phi}$ such that $\Theta^{\alpha} \sbetter{w}{\alpha} \Delta^{\alpha}$, respectively $\Gamma^{\alpha}\sbetter{w}{\alpha}\Sigma^{\alpha}$. Note that it can be that $\success{w}{\alpha}{\phi}{\Delta}=\success{w}{\alpha}{\phi}{\Gamma}$ or that $\Delta=\Gamma$. Naively, we may say that an instrument $\Delta\in\instr{w}{\alpha}{\phi}$ in an upper bound of $\better{w}{\phi}$ is \textit{among the best} instruments for $\alpha$ at $w$, whereas an instrument $\Gamma\in\instr{w}{\alpha}{\phi}$ in a lower bound of $\better{w}{\phi}$ is \textit{among the worst} 
 instruments for $\alpha$ at $w$ for securing $\phi$.

There is an obvious objection to this naive approach: Suppose there are only two available $\phi$-instruments $\Delta,\Gamma\in\instr{w}{\alpha}{\phi}$ for $\alpha$ at $w$ such that $\success{w}{\alpha}{\phi}{\Delta}=1$ and $\success{w}{\alpha}{\phi}{\Gamma}=0.999$. Our naive definition tells us: (i) $\Delta$ is better than $\Gamma$ (for $\alpha$, in securing $\phi$ at $w$), (ii) $\Delta$ is among the best available $\phi$-instruments, and (iii) $\Gamma$ is among the worst available $\phi$-instruments. That $\Gamma$ is the `worst' instrument is arguably an overstatement. In fact, the argument can even be made that $\Gamma$ is an exceptionally better $\phi$-instrument than $\Delta$. For example, suppose that $\Delta$ was performed only once in the past by $\alpha$, securing $\phi$ and that $\Gamma$ was performed 1000 times, securing $\phi$ 999 times. Then, one should be able to conclude that $\Gamma$ is (by far) the best instrument. In particular, $\Gamma$ has proven to be more \textit{reliable} in producing the desired outcome. This observation motivates the instalment of \textit{thresholds}, which filter out insufficient experience, ensuring a certain quality standard of the instruments evaluated.

\medskip
\noindent \textbf{Thresholds for best, worst, and better.} We consider two types of thresholds. First, we can impose a threshold $n$ that states the minimum amount of past witnesses of an instrument's application. Second, we can impose a threshold on the minimum success ratio of potential instruments. We begin by considering the first threshold and adopt the second approach in \sect~\ref{subsec:good}. 

Let $n$ refer to the threshold that needs to be met by the total amount of witnesses $|\achiev{w}{\alpha}{\phi}{\Delta}\cup \fail{w}{\alpha}{\phi}{\Delta}|$. We write $\instr{w}{\alpha}{\phi,n}$ to denote the set of $\phi$-instruments satisfying threshold $n$, as defined in (\ref{defq:instr_n}).

{\setstretch{\s}
\begin{enumerate}
 [label={},ref=$\mathsf{d\arabic*}$, leftmargin=0.5cm]
   \setcounter{enumi}{23}
   \item\label{defq:instr_n}
   \textit{Available $\phi$-instruments for $\alpha\in\agents$ at $w$ (with threshold $n$)} \hfill(\ref{defq:instr_n})\\
   $\instr{w}{\alpha}{\phi,n}:=\{\Delta\  | \
   \Delta\in \instr{w}{\alpha}{\phi} 
 $ and $
|\achiev{w}{\alpha}{\phi}{\Delta}\cup \fail{w}{\alpha}{\phi}{\Delta}|\geq n\}$
   \end{enumerate}}
   
\noindent Based on (\ref{defq:instr_n}), we can refine the notion of `betterness' as follows:

{\setstretch{\s}
\begin{enumerate}
 [label={},ref=$\mathsf{d\arabic*}$, leftmargin=0.5cm]
   \setcounter{enumi}{24}
   \item\label{defq:better_n}
   \textit{Better $\phi$-instruments, relative to threshold $n\in\mathbb{N}$}\hfill(\ref{defq:better_n})\\
\textit{For each} $\Delta,\Gamma\in\instr{w}{\alpha}{\phi,n}$, $\Delta^{\alpha}\better{w}{\phi,n}\Gamma^{\alpha}$ \ifandonlyif $\success{w}{\alpha}{\phi}{\Delta} \geq \success{w}{\alpha}{\phi}{\Gamma}$
\end{enumerate}}

We interpret $\Delta^{\alpha}\better{w}{\phi,n}\Gamma^{\alpha}$ in (\ref{defq:better_n}) as `$\Delta^{\alpha}$ is a \textit{weakly better} $\phi$-instrument than $\Gamma^{\alpha}$ at $w$ given threshold $n$'. Imposing a threshold installs a quality control in providing axiological judgments of instrumentality. The undesirable consequences of the $0.999$ success rate example in the previous section can now be excluded by stipulating a threshold of any value $n> 1$, 
identifying $\Gamma$ as the best $\phi$-instrument for $\alpha$. 
In (\ref{defq:best_n}) and (\ref{defq:worst_n}) below, we define the sets $\best{w}{\alpha}{\phi}{n}$ for `among the best' and $\worst{w}{\alpha}{\phi}{n}$ for `among the worst', which contain those instruments in the upper, respectively lower bound of the ordering, satisfying the imposed threshold.

{\setstretch{\s}
\begin{enumerate}
 [label={},ref=$\mathsf{d\arabic*}$, leftmargin=0.5cm]
   \setcounter{enumi}{25}
   \item\label{defq:best_n}
   \textit{Best $\phi$-instruments, relative to threshold $n\in\mathbb{N}$} \hfill(\ref{defq:best_n})\\
$\best{w}{\alpha}{\phi}{n} := \{ \Delta\in \instr{w}{\alpha}{\phi,n} \ | \ $ for each $\Gamma\in \instr{w}{\alpha}{\phi,n},  \Delta^{\alpha} \better{w,n}{\phi}\Gamma^{\alpha}\}$
\item\label{defq:worst_n}
\textit{Worst $\phi$-instruments, relative to threshold $n\in\mathbb{N}$} \hfill(\ref{defq:worst_n})\\
$\worst{w}{\alpha}{\phi}{n}\ \! {=}\ \! \{ \Delta\in \instr{w}{\alpha}{\phi,n} \ | $ for each $\Gamma\in \instr{w}{\alpha}{\phi,n},  \Gamma^{\alpha} \better{w,n}{\phi}\Delta^{\alpha}\}$
\end{enumerate}}

Reconsider the example in \fig~\ref{fig:success_ratio}. Depending on the threshold applied, either $\Delta$ or $\Gamma$ will qualify as among the best (worst) $\phi$-instruments at $w_6$. For instance, a threshold of $n=3$ excludes $\Gamma$ as a potential `best' $\phi$-instrument, i.e., $\Gamma\in\instr{w_6}{\alpha}{\phi,2}$, but $\Gamma\not\in\instr{w_6}{\alpha}{\phi,3}$. In fact, we find that $\Gamma\in\best{w_6}{\alpha}{\phi}{2}$ and $\Delta\in\worst{w_6}{\alpha}{\phi}{2}$, whereas $\Delta\in\best{w_6}{\alpha}{\phi}{3}$. Furthermore, observe that $\Delta\not\in\worst{w_6}{\alpha}{\phi}{3}$ since for $\Theta=\Delta\cup\Gamma$ we have $\Theta\in \instr{w_6}{\alpha}{\phi,3}$ and $\success{w_6}{\alpha}{\phi}{\Delta} > \success{w_6}{\alpha}{\phi}{\Theta}$.

\subsubsection{Thresholds and Good Instruments}\label{subsec:good}

In this section, we suggest possible ways to formally address the assessment of \textit{good}-instruments (item (4) of \dfn~\ref{def:philosophical_instruments}). As discussed in \sect~\ref{Sec:philosophy_agency}, for von Wright, comparative judgments are \textit{objective} since they depend on empirical data (e.g., collecting past experiences) and logical orderings (e.g., success ratio orderings). However, such judgments become more problematic when we address the label `good'. 

Von Wright determines good instruments on the basis of their `good-making properties'. Recall, how \textit{well} a knife cuts depends on the good-making property associated with `cutting': the sharpness of the knife. Such value judgments concerning knives, von Wright argues, can be objectively assessed, especially when we order an available set of knives according to their sharpness. Without such a comparative set, things become more difficult since we need to define a minimal degree of `sharpness' that enables `cutting well'. Von Wright addresses this degree through \textit{causal properties} \cite[p.~26]{Wri72}, which determine the causal relation between `sharpness' and `cutting'---a relationship that may be objectively assessed through empirical investigations. Then, a knife cuts `well' or qualifies as a \textit{good} cutting-instrument, whenever it has the causal property of sharpness needed for cutting (note that this property depends on what needs to be cut). 

We mention two problems related to the above approach. First, such judgments of \textit{goodness} are a special case of comparative judgments. That is, the goodness reflected in a causal property is a goodness relative to (compared to) a threshold, for instance, the minimum amount of sharpness for cutting vegetables. 
In other words, judgments of instrumental goodness remain comparative but objective with respect to an external criterion, i.e., a threshold. Therefore, the term `good' employed here does not differ from any other use of `good' which, for instance, is defined relative to some theory of ethics. Still, the fact that such a threshold (the causal property) is rooted in experience allows for the judgment to be objectively evaluated. This observation is compatible with von Wright's ideas due to his emphasis on the logical nature of such judgments. 

Second, for an agent in a time-limited decision-making situation, determining the causal properties of an instrument (such as sharpness) may be an overburdening task. 
An agent that aims at cutting, say, a piece of paper may not have access to determining the causal properties of a different knife's sharpness relative to the robustness of the piece of paper. Although causal properties form objective criteria for obtaining accurate judgments of instrumentality from a theoretical point of view, from the agent's point of view, such criteria are unrealistic and impractical. We find this to be a strong reason for using the agent's \textit{personal experience} with an instrument (such as the knives available for cutting) as a criterion for judging instrumentality.

The question that remains is: What qualifies as sufficient experience for judging an instrument as \textit{good} for a given purpose?
We provide two notions of threshold that serve to denote sufficient experience: first, we propose a threshold on the success ratio of a given instrument and, second,  we combine the former with the notion of a minimum amount of past witnesses (see 
\sect~\ref{subsect:better}).

{\setstretch{\s}
\begin{enumerate}
 [label={},ref=$\mathsf{d\arabic*}$, leftmargin=0.5cm]
   \setcounter{enumi}{27}
\item\label{defq:good_instruments_ratio}\textit{Good$_n$ $\phi$-instruments, for $\alpha\in\agents$ at $w$} (with $n\in\mathbb{R}$ and $0\leq n\leq 1$)\\
$\good{w}{\alpha}{\phi,n} := \{ \Delta \ | \ \Delta\in \instr{w}{\alpha}{\phi}$ and $\success{w}{\alpha}{\phi}{\Delta} \geq n \}$ 
\hfill(\ref{defq:good_instruments_ratio})
\end{enumerate}
}

Definition (\ref{defq:good_instruments_ratio}) tells us that an action $\Delta$ is a \textit{good}$_n$ $\phi$-instrument for $\alpha$ at $w$, whenever `$\Delta$ qualifies as a $\phi$-instrument for $\alpha$ at $w$, and its success ratio satisfies threshold $n$'. Observe that the initial definition of (excellent) instruments  (\ref{defq:f-instr}) and (\ref{defq:e-f-instr})---i.e., items (1) and (2) of \dfn~\ref{def:philosophical_instruments}---are limiting cases of \textit{good}$_n$ instruments, namely, where the success ratios are $n>0$ and $n=1$, respectively. The relation between `good', (\ref{defq:f-instr}) and (\ref{defq:e-f-instr}) is expressed through the following:

{\setstretch{\s}
\begin{enumerate}
\item[] 
$\Delta\in\good{w}{\alpha}{\phi,1}$ \ifandonlyif $w\models [\Delta^{\alpha}]^{ex-instr}_{\infty}\phi$ \ifandonlyif $\Delta\in\best{w}{\alpha}{\phi}{1}$ 

\item[]  
$\Delta\in\good{w}{\alpha}{\phi,n}$ \ifandonlyif $w\models [\Delta^{\alpha}]^{instr}_{\infty}\phi$ (with $0<n\leq 1$)
\end{enumerate}}

A \textit{poor} instrument can be defined in two ways: it is either an instrument failing to meet a `poorness'-threshold $n$ or an instrument that fails to qualify as good. We opt for the first approach---represented by (\ref{defq:poor_instr1})---to leave room for instruments that are considered neither good nor poor. 

{\setstretch{\s}
\begin{enumerate}
 [label={},ref=$\mathsf{d\arabic*}$, leftmargin=0.5cm]
   \setcounter{enumi}{28}
   \item\label{defq:poor_instr1}
\textit{Poor$_n$ $\phi$-instruments, for $\alpha\in\agents$ at $w$} ($n\in\mathbb{R}$ and $0\leq n\leq 1$)\\
$\poor{w}{\alpha}{\phi,n} := \{ \Delta \ | \ \Delta\in \instr{w}{\alpha}{\phi}$ and $\success{w}{\alpha}{\phi}{\Delta} \leq n \}$\hfill(\ref{defq:poor_instr1})
\end{enumerate}}

We face the same objection encountered while discussing naive betterness in \sect~\ref{subsect:better}. Here too, an instrument that proved itself a certain number of times could be considered more \textit{reliable} and thus more eligible for the label `good'. Therefore, in some cases, we may need to expand definition (\ref{defq:good_instruments_ratio}) with a second threshold imposing a minimum amount of experience with the instrument in question. The resulting definition is presented below (\ref{defq:good_instruments_ratio+}). 
We read $\Delta\in\good{w}{\alpha}{\phi,n,m}$ as `at $w$ for $\alpha$, action $\Delta$ is a \textit{good} $\phi$-instrument having a success ratio meeting $n$ and a minimum amount of past witnesses $m$'. 

{\setstretch{\s}
\begin{enumerate}
 [label={},ref=$\mathsf{d\arabic*}$, leftmargin=0.5cm]
   \setcounter{enumi}{29}
   \item\label{defq:good_instruments_ratio+}\textit{Good$^m_n$ $\phi$-instruments, for $\alpha\in\agents$ at $w$} ($n\in\mathbb{R}$, $0\leq n\leq 1$, $m\in\mathbb{N}$)\\
$\good{w}{\alpha}{\phi,n,m} := \{ \Delta \ | \ \Delta\in \instr{w}{\alpha}{\phi,m}$ and $\success{w}{\alpha}{\phi}{\Delta} \geq n \}$\hfill(\ref{defq:good_instruments_ratio+})
\end{enumerate}}

\noindent The modified definition of `poor' instruments is similarly obtained from (\ref{defq:poor_instr1}).

Last, as an illustration of the above machinery, consider the $\lae$-model provided in \fig~\ref{fig:success_ratio}. Suppose agent $\alpha$ desires to secure $\phi$ at $w_6$. Then, suppose the minimal required success ratio is $n=0.75$. In that case, at $w_6$ we find that only $\Gamma\in\good{w_6}{\alpha}{\phi,n}$ since $\success{w_6}{\alpha}{\phi}{\Gamma}=1\geq 0.75$. Action $\Delta$ fails to qualify due to $\success{w_6}{\alpha}{\phi}{\Delta}=\frac{2}{3} < 0.75$. 
Suppose we impose the additional constraint that the minimum amount of witnesses is $3=m$. In that case, both $\Gamma,\Delta\not\in\good{w_6}{\alpha}{\phi,n,m}$ since $\Gamma$ does not satisfy $m$ and $\Delta$ doesn't satisfy $n$. Still, despite $\Delta$ not being able to qualify as a `good' $\phi$-instrument, it is nevertheless the `best' $\phi$-instrument for $\alpha$ at $w_6$, i.e., $\Delta\in\best{w_6}{\alpha}{\phi}{m}$.

In conclusion, whether an instrument is considered `good' or `poor' depends on its evaluative criteria. Nevertheless, via adopting different, yet combinatory, thresholds (i)-(iii), we can draw meaningful conclusions concerning the (comparative) value of the instruments at an agent's disposal. 
\begin{itemize}
    \item[(i)] the depth of past experience, e.g., (\ref{defq:c-instr}); 
    \item[(ii)] the minimum amount of witnesses, e.g., (\ref{defq:better}); 
    \item[(iii)] the minimum rate of success, e.g., (\ref{defq:good_instruments_ratio}).
\end{itemize}
Furthermore, we also saw that a `best' $\phi$-instrument $\Delta$ is not necessarily a good$_n$ $\phi$-instrument if the success ratio expressed by threshold $n$ is not met by $\Delta$. Alternatively, one could consider defining `best' in terms of ordering only those instruments that meet the threshold for qualifying as a good instrument. We leave such considerations for future work.

\subsection{Defeasibility of Instrumentality Judgments}\label{defeasibility}

Instrumentality, as defined in this work, is a defeasible notion in three ways. First, depending on the length of the interval considered to evaluate the past, an instrument $\Delta$ may fail to qualify as an (excellent) $\phi$-instrument once the interval is shortened. Furthermore, it can be easily checked that the excellence of instruments may also fail to be preserved when the interval is extended.  

Second, concerning the future, an instrument may fail to remain classified as an (excellent) instrument, either because an agent changes her expectations or, in the case of excellent candidate instruments, because the instrument has failed to produce the desired end in the meantime. (We note that this holds independent of whether time continues, i.e., whether $\lozenge\top$ holds.)  Nevertheless, plain candidate instrumentality is preserved over time when experience is accumulated. That is, once an instrument proves to be a candidate instrument, it remains a candidate instrument. 

The third and foremost defeasible aspect of our formalised instrumentality relations arises through the use of expectations. Namely, 
 the conjectural element of instrumentality judgments imposes a defeasible characteristic on such judgments: although universal statements can be objectively true for the past, such statements remain uncertain with respect to the future. 
 The defeasibility of expectations consists in the possibility that, although an agent $\alpha$ expects that the (excellent) instrument will serve its intended end once again at the moment of evaluation $w$, the actual 
successor of $w$ is such that that instrument fails to deliver the purpose. These cases reveal a discrepancy between $\alpha$'s expectations and the actual future. Defeasibility with respect to the actual future 
is demonstrated by the fact that the formula 
below is not a  
theorem of $\lae$.

{\setstretch{\s}
\begin{enumerate}
   \item[]
   $[\Delta^{\alpha}]_{n}^{ex-instr}\phi\rightarrow \nbox (t(\Delta^{\alpha})\rightarrow \phi)$
   \end{enumerate}}

We close this section with a clarification concerning expectations: the conjectural element refers to the agent's expectations \textit{at the 
moment of evaluation, which is what an agent expects with respect to the immediate future of that moment.}  
By contrast, in 
evaluating the past, we must ignore the agent's past expectations in selecting the agent's relevant experience. 
In fact, a series of unexpected events in the past 
may have led the agent to the conviction that a particular instrument is suitable for a specific purpose. That is, the agent may learn about instruments through unexpected events. For instance, suppose that the moment of evaluation $w$ is an immediate successor of a moment $w'$: then, it might be the case that at $w'$, the agent did not expect $w$ to obtain. Regardless, once $w$ obtains, the agent gains new experience concerning instrumentality judgments, as she always does whenever time passes.

\section{Closing Remarks}\label{sec:closing_remarks} 
We point out that we did not define von Wright's notion of \textit{bad} instruments \cite[p.~23, p.~35]{Wri72}. An available instrument (possibly a good or excellent instrument) is qualified as bad whenever a (side-)effect of that instrument would be undesirable. For example,  negotiations may be a good instrument for ending a war. Using an atomic bomb may even be excellent as an instrument. Still, the latter is a bad instrument because it will additionally lead to the destruction of the planet and the death of many (see \cite{Wri72} for a discussion). Here, the label `bad' is assigned to the instrument based on its (additional) consequences. These may, for instance, be certain moral or social values that are violated.
 
Moreover, there is the potential to provide \textit{deontic extensions} of the logic $\lae$ (the Greek d\'eon refers to `what is binding', i.e., duty). For instance, deontic concepts such as `obligation' and `prohibition' can be incorporated in $\lae$ through violation constants, e.g., an action (outcome) is obligatory if and only if that action's (outcome's) complement entails a violation. This approach is known as Anderson's reduction of deontic logic \cite{AndMoo57}. Such constants denote that the agent is in a violation state. 
A deontic extension of the basic $\laei$ logic from \cite{BerPas18} was developed in \cite{BerLyoOli20}. In that system, various instrumentality notions concerning obligations and prohibitions were introduced. For instance, in addition to the traditional distinction between `ought to be' and `ought to do'---respectively, obligations about states of affairs and obligations about actions---the involvement of instrumentality statements allows for a third category called `norms of instrumentality'. These are norms that oblige or prohibit a particular action as a means to achieving a particular end. To give an example, the norm `It is prohibited to use nonpublic information as an instrument to acquire financial profit on the stock market' (known as the law on ‘insider trading’) forbids the use of such information only as a \textit{means} to attain financial profit. The only temporal operator employed in \cite{BerLyoOli20} is the immediate successor modality. 

Combining the above two observations, one can extend the present work to a deontic setting which allows for reasoning with prohibitions that forbid those instruments that have deontically `bad' consequences (e.g. violations or sanctions) despite being `good' instruments. Another direction would be to define obligatory actions in terms of those `good' instruments (or best) that will secure an obligatory outcome. Here one can think of acquiring different notions depending on whether the agent's expectations will be involved.

\bibliographystyle{splncs04}

\begin{thebibliography}{8}


\bibitem{AndMoo57}
A.R. Anderson and O.K. Moore (1957). The formal analysis of normative concepts. \textit{American Sociological Review} \textbf{22}(1), 9--17. 

\bibitem{Ans00}
G.E.M. Anscombe  (2000). \textit{Intention}. Harvard University Press.

\bibitem{Aqv02}
L. {\AA}qvist (2002). Old foundations for the logic of agency and action. \textit{Studia Logica} \textbf{72}(3), 313--338.

\bibitem{Aud89}
R. Audi (1989). \textit{Practical Reasoning}. Routledge.


\bibitem{BelPerXu01}
N. Belnap, M. Perloff and M. Xu (2001). \textit{Facing the Future. Agents and Choices in our Indeterminist World}. Oxford: Oxford University Press.


\bibitem{BerPas18}
K. van Berkel and M. Pascucci (2018). Notions of instrumentality in agency logic. In: \textit{Proceedings of PRIMA 2018}, Springer Cham. pp. 403--419.

\bibitem{BerLyoOli20}
K. van Berkel, T. Lyon, and F. Olivieri (2020). A decidable multi-agent logic for reasoning about actions, instruments, and norms. In:  \textit{International Conference on Logic and Argumentation}, Springer Cham. pp. 219--241.


\bibitem{BlaRijVen01}
P. Blackburn, M. de Rijke and Y. Venema (2001). \textit{Modal Logic}. Cambridge: Cambridge University Press.

\bibitem{BouLor18}
J. Boudou and E. Lorini (2018). Concurrent game structures for temporal STIT Logic. \textit{Proceedings of AAMAS 2018}, pp. 381--389.

\bibitem{Bro11a}
J. Broersen (2011). Deontic epistemic stit logic distinguishing modes of mens rea. \textit{Journal of Applied Logic} \textbf{9}(2): 137--152. 

\bibitem{Bro11b}
J. Broersen (2011). Making a start with the stit logic analysis of intentional action. \textit{Journal of Philosophical Logic} \textbf{40}(4): 499--530. 


\bibitem{Bro88}
M.A. Brown (1988). On the logic of ability. \textit{Journal of Philosophical Logic} \textbf{17}(1), 1--26.

\bibitem{Cla87}
D.S. Clarke (1987). \textit{Practical Inferences}.  Routledge Kegan \& Paul. 

\bibitem{ConLau16}
C. Condoravdi and S. Lauer (2016). Anankastic conditionals are just conditionals. \textit{Semantics \& Pragmatics} \textbf{9}, 1--69.

\bibitem{Dav16}
D. Davidson (2016). I. agency. In R. Binkley, R. Bronaugh and A. Marras (Eds.), \textit{Agent, Action, and Reason}, pp. 1-37. University of Toronto Press.

\bibitem{FisLad79}
M. Fischer and R. Ladner (1979). Propositional dynamic logic of regular programs. \textit{Journal of Computer and System Sciences} \textbf{18}(2), 194--211.

\bibitem{Gol70}
A. Goldman (1970). \textit{Theory of Human Action}. Princeton University Press,
Princeton.



\bibitem{Har71}
R. M. Hare (1971). \textit{Practical Inferences}. University of California Press.

\bibitem{HerLor10}
A. Herzig and E. Lorini (2010). A dynamic logic of agency I: STIT, capabilities and powers. \textit{Journal of Logic, Language and Information} \textbf{19}(1), 89–121 (2010).


\bibitem{Hum39}
D. Hume (1739). \textit{A Treatise of Human Nature}. Oxford: Oxford University Press. 


\bibitem{Lew17}
M. Lewinski (2017). Practical argumentation as reasoned advocacy. \textit{Informal Logic} \textbf{37}(2), 85--113.


\bibitem{LorSch17}
E. Lorini and F. Schwarzentruber (2017). A path in the jungle of logics for multi-agent system: on the relation between general game-playing logics and seeing-to-it-that logics. \textit{Proceedings of AAMAS 2017}, pp. 687--695.


\bibitem{Meyetal15}
J.J.Ch. Meyer, J. Broersen, and A. Herzig (2015). BDI Logics. In: H. van Ditmarsch, J.Y. Halpern, W. van der Hoek, and B. Kooi (Eds.), \textit{Handbook of Logics of Knowledge and Belief}. College Publications, pp. 453–498.

\bibitem{RaoGeo95}
A.S. Rao and M.P. Georgeff (1995). \textit{BDI agents: from theory to practice}. In: V. Lesser and L. Gasser (Eds.), \textit{ICMAS-95, Proceedings of the first international conference of multiagent systems}. Vol. 95, pp. 312–319.

\bibitem{Raz78}
J. Raz (1978). \textit{Practical Reasoning}. Oxford University Press

\bibitem{Sae01}
K. J. S{\ae}b{\o} (2001). Necessary conditions in a natural language, In: C. Fery and W. Sternefeld (Eds.), \textit{Audiatur Vox Sapientiae: A Festschrift for Arnim von Stechow}, pp. 427--449.


\bibitem{Seg92}
K. Segerberg (1992). Getting started: beginnings in the logic of action. \textit{Studia Logica} \textbf{51}(3),  347--378.

\bibitem{vonSte06}
A. von Stechow, S. Krasikova and D. Penka (2006). Anankastic conditionals again. In:  \textit{A Festschrift for Kjell Johan S{\ae}b{\o}: In Partial Fulfillment of the Requirements for the Celebration of his 50th Birthday}, pp. 151--171.

\bibitem{Sto10}
F. Stoutland (2010). Von Wright. In: O'Connor, T., \& Sandis, C. (Eds.). \textit{A Companion to the Philosophy of Action},  pp. 589--598


\bibitem{Wal07}
D. Walton (2007). Evaluating practical reasoning. \textit{Synthese} \textbf{157}(2): 197--240.

\bibitem{Wri57}
G.H. von Wright (1957). \textit{The Logical Problem of Induction}. Barnes \& Noble, New York.

\bibitem{Wri63}
G.H. von Wright (1963). \textit{Norm and Action: A Logical Enquiry}. Routledge \& Kegan Paul,  London and Henley. Fourth impression.

\bibitem{Wri63b}
G.H. von Wright (1963). Practical inference. \textit{The Philosophical Review} \textbf{72}(2), 159--179.

\bibitem{Wri68}
G.H. von Wright (1968). \textit{An Essay in Deontic Logic and the General Theory of Action}. Amsterdam: North Holland Publishing Company.

\bibitem{Wri72} 
G.H. von Wright (1972). \textit{The Varieties of Goodness}. Routledge \& Kegan Paul, London and Henley. Fourth impression.

\bibitem{Wri72b}
G.H. von Wright (1972). On so-called practical inference. \textit{Acta Sociologica} \textbf{15}(1), 39--53.


\bibitem{Xu2010}
M. Xu (2010). Combinations of Stit and actions. \textit{Journal of Logic, Language and Information} \textbf{19}: 485--503.

\end{thebibliography}
%

\end{document}